\declaretheorem[numberwithin=section]{theorem}
\declaretheorem[numberlike=theorem]{lemma}
\declaretheorem[numberlike=theorem]{proposition}
\numberwithin{equation}{section}
\author{I.~P.~Goulden}
\address{Department of Combinatorics \& Optimization\\University of Waterloo\\Canada}
\email{ipgoulden@uwaterloo.ca}
\author{Mathieu Guay-Paquet}
\address{LaCIM\\Universit\'e du Qu\'ebec \`a Montr\'eal\\Canada}
\email{mathieu.guaypaquet@lacim.ca}
\thanks{IPG and MGP were supported by NSERC}
\author{Jonathan Novak}
\address{Department of Mathematics. Massachusetts Institute of Technology, USA}
\email{jnovak@math.mit.edu}
\title{Polynomiality of monotone Hurwitz numbers in higher genera}
\keywords{Hurwitz numbers, matrix models, enumerative geometry}
\subjclass{Primary 05A15, 14E20; Secondary 15B52}
\date{\today}
\newcommand{\ZZ}{\mathbb{Z}}
\newcommand{\QQ}{\mathbb{Q}}
\newcommand{\cm}{{\mathcal{M}}}
\newcommand{\cmbar}{\overline{\cm}}
\newcommand{\pp}{\mathbf{p}}
\newcommand{\qq}{\mathbf{q}}
\newcommand{\rr}{\mathbf{r}}
\newcommand{\xx}{\mathbf{x}}
\newcommand{\yy}{\mathbf{y}}
\newcommand{\hur}{H}
\newcommand{\Hur}{\mathbf{H}}
\newcommand{\mon}{\vec{H}}
\newcommand{\Mon}{\vec{\mathbf{H}}}
\newcommand{\DD}{\mathcal{D}}
\newcommand{\EE}{\mathcal{E}}
\newcommand{\wD}{\widehat{\mathcal{D}}}
\newcommand{\wE}{\widehat{\mathcal{E}}}
\newcommand{\mcR}{\mathcal{R}}
\newcommand{\abs}[1]{\left|{#1}\right|}
\newcommand{\diff}[2][]{\frac{\partial{#1}}{\partial{#2}}}
\newcommand{\sdiff}[3][]{\frac{\partial^2{#1}}{\partial{#2}\partial{#3}}}
\newcommand{\group}[1]{\mathbf{#1}}
\newcommand{\domain}[1]{\mathfrak{#1}}
\newcommand{\function}[1]{\mathcal{#1}}
\newcommand{\Tr}{\operatorname{Tr}}
\DeclareMathOperator{\Aut}{Aut}
\DeclareMathOperator*{\Split}{Split}
\DeclareMathOperator{\trace}{tr}
\DeclareMathOperator{\T}{T}
\let\originalDelta\Delta\renewcommand{\Delta}{\mathop{\originalDelta}\nolimits}
\let\originalPi\Pi\renewcommand{\Pi}{\mathop{\originalPi}\nolimits}
\newcommand{\partref}[1]{\textit{(\ref*{#1})}}
\newcommand{\thmpartref}[2]{\hyperref[#1]{\autoref*{#1}\partref{#2}}}
\begin{document}

\begin{abstract}
Hurwitz numbers count branched covers of the Riemann sphere with specified ramification, or equivalently, transitive permutation factorizations in the symmetric group with specified cycle types. Monotone Hurwitz numbers count a restricted subset of these branched covers, related to the expansion of complete symmetric functions in the Jucys-Murphy elements, and have arisen in recent work on the the asymptotic expansion of the Harish-Chandra-Itzykson-Zuber integral. In previous work we gave an explicit formula for monotone Hurwitz numbers in genus zero. In this paper we consider monotone Hurwitz numbers in higher genera, and prove a number of results that are reminiscent of those for classical Hurwitz numbers. These include an explicit formula for monotone Hurwitz numbers in genus one, and an explicit form for the generating function in arbitrary positive genus. From the form of the generating function we are able to prove that monotone Hurwitz numbers exhibit a polynomiality that is reminiscent of that for the classical Hurwitz numbers, \textit{i.e.}, up to a specified combinatorial factor, the monotone Hurwitz number in genus $g$ with ramification specified by a given partition is a polynomial indexed by $g$ in the parts of the partition.
\end{abstract}

\maketitle
\setcounter{tocdepth}{2}
\tableofcontents

\section{Introduction}\label{sec:intro}

\subsection{Classical Hurwitz numbers}

Hurwitz numbers count branched covers of the Riemann sphere with specified ramification data. The most general case which is commonly studied is that of \emph{double} Hurwitz numbers $\hur_g(\alpha, \beta)$, where two points on the sphere are allowed to have non-simple ramification. That is, for two partitions $\alpha, \beta \vdash d$, the Hurwitz number $\hur_g(\alpha, \beta)$ counts degree $d$ branched covers of the Riemann sphere by Riemann surfaces of genus $g$ with ramification type $\alpha$ over 0 (say), ramification type $\beta$ over $\infty$ (say), and simple ramification over $r$ other arbitrary but fixed points (where $r = 2g - 2 + \ell(\alpha) + \ell(\beta)$ by the Riemann-Hurwitz formula), up to isomorphism. The original case of \emph{single} Hurwitz numbers $\hur_g(\alpha)$ is obtained by taking $\beta = (1^d)$, corresponding to having no ramification over $\infty$.

If we label the preimages of some unbranched point by $1, 2, \dotsc, d$, then Hurwitz's monodromy construction~\cite{hurwitz91} identifies $\hur_g(\alpha, \beta)$ bijectively with the number of $(r+2)$-tuples $(\rho, \sigma, \tau_1, \dotsc, \tau_r)$ of permutations in the symmetric group $\group{S}_d$ such that
\begin{enumerate}
  \item
    $\rho$ has cycle type $\alpha$, $\sigma$ has cycle type $\beta$, and the $\tau_i$ are transpositions;
  \item
    the product $\rho \sigma \tau_1 \dotsm \tau_r$ is the identity permutation;
  \item
    the subgroup $\langle \rho, \sigma, \tau_1, \dotsc, \tau_r \rangle \subseteq \group{S}_d$ is transitive; and
  \item
    the number of transpositions is $r = 2g - 2 + \ell(\alpha) + \ell(\beta)$.
\end{enumerate}

The double Hurwitz numbers were first studied by Okounkov~\cite{okounkov00a}, who addressed a conjecture of Pandharipande~\cite{pandharipande00} in Gromov-Witten theory by proving that a certain generating function for these numbers is a solution of the $2$-Toda lattice hierarchy from the theory of integrable systems. Okounkov's result implies that a related generating function for the single Hurwitz numbers is a solution of the KP hierarchy, and as shown by Kazarian and Lando~\cite{kazarian09,kazarian-lando07} via the ELSV formula~\cite{ekedahl-lando-shapiro-vainshtein01}, this further implies the celebrated Witten-Kontsevich theorem~\cite{kontsevich92,witten91} relating intersection theory on moduli spaces to integrable systems. These developments, which revealed rich connections between algebraic geometry and mathematical physics, have led to renewed interest in the Hurwitz enumeration problem.

\subsection{Monotone Hurwitz numbers}

Recently, a new combinatorial twist on Hurwitz numbers emerged in random matrix theory.  Fix a pair $A,B$ of
$N \times N$ normal matrices, and consider the so-called \emph{Harish-Chandra-Itzykson-Zuber integral}
	\begin{equation*}
		\mathcal{I}_N(z;A,B) = \int\limits_{\mathbf{U}(N)} e^{-zN\Tr(AUBU^{-1})} \mathrm{d}U,
	\end{equation*}	
where the integration is over the group of $N \times N$ unitary matrices equipped with its Haar probability measure.
Since $\group{U}(N)$ is compact, the integral converges to define an entire function of the complex variable $z$.
This function is one of the basic special functions of random matrix theory.  A problem of perennial interest,
whose solution would have diverse applications, is
to determine the $N \rightarrow \infty$ asymptotics of $\function{I}_N(z;A_N,B_N)$ when $A_N,B_N$ are given 
sequences of normal matrices which grow in a suitably regular fashion.  

A new approach to the asymptotic analysis of the HCIZ integral was initiated in~\cite{goulden-guay-paquet-novak12c}.
Fix a simply-connected domain $\domain{D}_N$ containing $z=0$ on which $\function{I}_N(z;A,B)$ is non-vanishing.
Then the equation
	\begin{equation*}
		\function{I}_N(z;A,B) = e^{N^2\function{F}_N(z;A,B)}
	\end{equation*}
has a unique holomorphic solution on $\domain{D}_N$ subject to $\function{F}_N(0;A.B)=1$.  In~\cite{goulden-guay-paquet-novak12c},
we proved that, for $1 \leq d \leq N$, the $d$th derivative of $\function{F}_N(z;A,B)$ at $z=0$ is given 
by the absolutely convergent series
	\begin{equation*}
		\function{F}_N^{(d)}(0;A,B) = \sum_{g=0}^{\infty} \frac{C_{g,d}(A,B)}{N^{2g}}
	\end{equation*}
with coefficients
	\begin{equation*}
		  C_{g,d}(A,B) = \sum_{\alpha,\beta \vdash d} (-1)^{d+\ell(\alpha)+\ell(\beta)}
		  \mon_g(\alpha,\beta) \, \frac{p_\alpha(A)}{N^{\ell(\alpha)}} \, \frac{p_\beta(B)}{N^{\ell(\beta)}},
	\end{equation*}
where $p_\alpha(A),p_\beta(B)$ are power-sum symmetric functions specialized at the eigenvalues of $A,B$ and
$\mon_g(\alpha,\beta)$ is the number of $(r+2)$-tuples $(\rho, \sigma, \tau_1, \dotsc, \tau_r)$ of permutations 
  from the symmetric group $\group{S}_d$ such that
  \begin{enumerate}
    \item
      $\rho$ has cycle type $\alpha$, $\sigma$ has cycle type $\beta$, and the $\tau_i$ are transpositions;
    \item
      the product $\rho \sigma \tau_1 \dotsm \tau_r$ is the identity permutation;
    \item
      the subgroup $\langle \rho, \sigma, \tau_1, \dotsc, \tau_r \rangle \subseteq \group{S}_d$ is transitive;
    \item
      the number of transpositions is $r = 2g - 2 + \ell(\alpha) + \ell(\beta)$; and
    \item\label{item:extra-condition}
      writing each $\tau_i$ as $(a_i \, b_i)$ with $a_i < b_i$, we have $b_1 \leq \dotsb \leq b_r$.
  \end{enumerate}

Clearly, if condition (\ref*{item:extra-condition}) is suppressed, the numbers $\mon_g(\alpha,\beta)$ become the classical double Hurwitz numbers $\hur_g(\alpha,\beta)$, so $\mon_g(\alpha,\beta)$ can be seen as counting a restricted subset of the branched covers counted by $\hur_g(\alpha,\beta)$.  
These desymmetrized Hurwitz numbers were dubbed \emph{monotone double Hurwitz numbers} in~\cite{goulden-guay-paquet-novak12c}.

In this paper, we study the monotone \emph{single} Hurwitz numbers $\mon_g(\alpha) = \mon_g(\alpha,1^d)$ and prove 
a monotone analogue of ELSV polynomiality in genus $g \geq 2$.
This result was used in~\cite{goulden-guay-paquet-novak12c} to prove the $N \rightarrow \infty$
convergence of $\function{F}_N(z;A_N,B_N)$ under appropriate hypotheses.  We also obtain an exact formula
for $\mon_1(\alpha)$. Before stating these results, we recall our previous work on monotone Hurwitz numbers in 
genus zero.

\subsection{Previous results for genus zero}

We introduce the notational convention $\mon^r(\alpha) = \mon_g(\alpha)$, where it is understood that for a given partition $\alpha \vdash d$, the parameters $r$ and $g$ determine one another via the Riemann-Hurwitz formula $r = 2g - 2 + \ell(\alpha) + d$.

In our previous paper~\cite{goulden-guay-paquet-novak12a} on monotone Hurwitz numbers in genus zero, we considered the generating function for monotone single Hurwitz numbers
\begin{equation}\label{eq:defMon}
  \Mon(z, t, \pp) = \sum_{d\geq 1} \frac{z^d}{d\,!} \sum_{r\geq 0} t^r \sum_{\alpha \vdash d} \mon^r(\alpha) p_\alpha
\end{equation}
as a formal power series in the indeterminates $z, t$ and $\pp = (p_1, p_2, \ldots)$, where $p_\alpha$ denotes the product $\prod_{j=1}^{\ell(\alpha)} p_{\alpha_j}$. We proved the following result, which gives a partial differential equation with initial condition that uniquely determines the generating function $\Mon$. Our proof is a combinatorial join-cut analysis, and we refer to the partial differential equation in this result as the \emph{monotone join-cut equation}.
\begin{theorem}[\cite{goulden-guay-paquet-novak12a}]\label{thm:joincut}
  The generating function $\Mon$ is the unique formal power series solution of the partial differential equation
  \[
    \frac{1}{2t}\left( z\diff[\Mon]{z} - z p_1 \right) = \frac{1}{2} \sum_{i,j \geq 1} \left( (i+j)p_i p_j \diff[\Mon]{p_{i+j}} + ij p_{i+j} \sdiff[\Mon]{p_i}{p_j} + ij p_{i+j} \diff[\Mon]{p_i} \diff[\Mon]{p_j} \right)
  \]
  with the initial condition $[z^0] \Mon = 0$.
\end{theorem}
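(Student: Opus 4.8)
The plan is to derive the partial differential equation by a \emph{join-cut analysis}: we take a monotone factorization counted by $\mon^r(\alpha)$ with $r \geq 1$ and examine the effect of removing the last transposition $\tau_r = (a_r\, b_r)$, which carries the largest second entry $b_r$. The key point is that, since $b_1 \leq \dotsb \leq b_r$, deleting $\tau_r$ from a monotone factorization of $\rho$ (of cycle type $\alpha$) leaves a monotone factorization of $\rho\tau_r$ with $r-1$ transpositions, and the monotonicity constraint on the shortened sequence is automatically inherited. Multiplying $\rho$ by a transposition either \emph{joins} two cycles of $\rho$ into one (when $a_r$ and $b_r$ lie in different cycles) or \emph{cuts} one cycle of $\rho$ into two (when they lie in the same cycle). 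Tracking how the cycle type changes under these two moves, and summing over all ways the move could have occurred, will produce the two types of terms on the right-hand side: the ``join'' term $\tfrac12\sum (i+j)p_i p_j \,\partial \Mon/\partial p_{i+j}$ from merging cycles of sizes $i$ and $j$ into one of size $i+j$, and the ``cut'' terms involving $ij\,p_{i+j}$ from splitting a cycle of size $i+j$ into two of sizes $i$ and $j$. The cut contribution splits further according to whether the two resulting cycles land in the same connected component of the factorization (giving the second-derivative term $ij\, p_{i+j}\,\partial^2\Mon/\partial p_i\partial p_j$) or in two different components (giving the product term $ij\, p_{i+j}\, (\partial\Mon/\partial p_i)(\partial\Mon/\partial p_j)$, by the usual product rule for exponential generating functions of disjoint structures).

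In more detail, the steps are as follows. First I would set up bookkeeping: the operator $z\,\partial/\partial z$ marks the degree $d$, each application of ``remove the last transposition'' lowers the transposition count $r$ by one (accounting for the $1/t$), and the factor $1/2$ absorbs the overcounting inherent in writing an unordered pair of cycles or an unordered split. Second, I would handle the base case: when $r = 0$ there are no transpositions, so $\rho$ must be the identity, forcing $\alpha = (1^d)$ and contributing exactly the $zp_1$ term that is subtracted on the left; this is also where the initial condition $[z^0]\Mon = 0$ enters, since a factorization must act on a nonempty set. Third, for $r \geq 1$, I would carry out the case analysis on $\tau_r$: the ``cut'' case (where $a_r, b_r$ are in the same cycle of $\rho$) yields, after deletion, a connected monotone factorization whose underlying permutation has one fewer cycle, and reconstructing $\rho$ from it requires choosing where to reinsert $b_r$ — this reinsertion choice is what produces the factors $i$ and $j$; the ``join'' case is handled symmetrically. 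Finally, I would verify uniqueness: the PDE expresses $z\,\partial\Mon/\partial z$ in terms of $\Mon$ itself and lower-degree data, so extracting the coefficient of $z^d$ determines $[z^d]\Mon$ recursively from $[z^{<d}]\Mon$, and together with $[z^0]\Mon = 0$ this pins down $\Mon$ uniquely as a formal power series.

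The main obstacle I anticipate is the careful treatment of the \emph{monotonicity constraint} during the deletion/reinsertion step. For the classical (non-monotone) join-cut equation the argument is standard, but here one must check that after reinserting the transposition $\tau_r$ with the largest second entry $b_r$, the resulting sequence is still monotone — i.e., that $b_r$ really can be taken maximal without loss of generality, and that the reinsertion does not interact with the ordering of the earlier $b_i$. Concretely, one needs that the map ``append a transposition $(a\,b)$ with $b \geq b_{r-1}$'' is a bijection between monotone factorizations of length $r-1$ of $\rho\tau$ and monotone factorizations of length $r$ of $\rho$ with prescribed last transposition, and that summing over the allowed values of $a$ and $b$ reproduces exactly the combinatorial weights $(i+j)$, $ij$ above with no monotonicity-induced restriction on those weights. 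Verifying that the monotonicity condition imposes no extra constraint at this last step — so that the recursion closes cleanly into the stated PDE — is the delicate part; everything else is the routine exponential-generating-function translation of join and cut moves.
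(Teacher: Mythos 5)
Your overall strategy --- a join--cut analysis obtained by deleting the last transposition --- is the same family of argument that the paper attributes to its predecessor for this theorem, and you have correctly located the delicate point: whether monotonicity constrains the deletion/reinsertion step. But the proposal leaves that point unresolved, and the resolution you anticipate --- that ``the monotonicity condition imposes no extra constraint'' and that the weights $(i+j)$ and $ij$ arise from unconstrained reinsertion --- cannot be correct. If reinsertion of the last transposition were unconstrained in that sense, the resulting recursion would be $\mon^r(\alpha) = \tfrac{1}{2}\sum(\cdots)\mon^{r-1}(\cdots)$, i.e.\ the classical join--cut equation with $\partial/\partial t$ replaced by $1/t$; the stated equation instead reads $\tfrac{d}{2}\,\mon^r(\alpha) = \tfrac{1}{2}\sum(\cdots)$, with an extra factor of $d$ on the left that your bookkeeping attributes only vaguely to ``$z\,\partial/\partial z$ marks the degree.'' That factor is precisely the footprint of the monotonicity constraint, so any argument in which the constraint silently disappears must be wrong somewhere.

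The missing idea is this. For $d \geq 2$, transitivity forces the largest ground-set element $d$ to occur in at least one transposition (the $\tau_i$ alone generate the same group as $\rho, \tau_1, \dotsc, \tau_r$, since $\rho = (\tau_1 \dotsm \tau_r)^{-1}$), and since $d$ can only occur as the larger entry $b_i$, monotonicity pushes all such transpositions to the end of the sequence; hence $\tau_r = (a_r\,d)$ necessarily. Deletion and reinsertion should therefore be performed only with transpositions anchored at $d$, for which the condition $b_r \geq b_{r-1}$ is automatic --- this is what closes the recursion. But it also means the join/cut analysis is relative to the distinguished cycle of $\rho$ containing $d$ (one chooses only $a_r$, not both entries), so the natural recursion is for a pointed generating function in which the length of that cycle is carried by a catalytic variable; this is exactly the role of $\Delta_1$, $\Pi_i$ and $\Split_{1 \to 2}$ in \autoref{sec:algmeth} and of \thmpartref{thm:pdeunique}{item:pdeunique-genus-zero}. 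The symmetric form of \autoref{thm:joincut} is recovered by closing up the catalytic variable, and the factor $z\,\partial\Mon/\partial z$ appears because $\Pi_1 \Delta_1 = \sum_{k \geq 1} k p_k\, \partial/\partial p_k$. A secondary slip: your uniqueness argument is not quite right either, since the join and cut terms on the right preserve the $z$-degree; the equation is a recursion on $r$, not on $d$ (the coefficient of $t^{r-1}$ determines $\mon^r$ from the $\mon^{r'}$ with $r' < r$, and $\mon^0$ is pinned down by requiring that the left-hand side contain no negative powers of $t$).
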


The differential equation of \autoref{thm:joincut} is the monotone analogue of the classical join-cut equation which determines the single Hurwitz numbers. To make this precise, consider the generating function for the classical single Hurwitz numbers
\begin{equation}
  \Hur(z, t, \pp) = \sum_{d\geq 1}\frac{z^d}{d\,!} \sum_{r\geq 0} \frac{t^r}{r!} \sum_{\alpha \vdash d} \hur^r(\alpha) p_\alpha.
\end{equation}
As shown in\cite{goulden-jackson97,goulden-jackson-vainshtein00}, $\Hur$ is the unique formal power series solution of the partial differential equation (called the  \emph{(classical) join-cut equation})
\begin{equation}\label{eq:classicaljoincut}
  \diff[\Hur]{t} = \frac{1}{2} \sum_{i,j \geq 1} \left( (i+j)p_i p_j \diff[\Hur]{p_{i+j}} + ij p_{i+j} \sdiff[\Hur]{p_i}{p_j} + ij p_{i+j} \diff[\Hur]{p_i} \diff[\Hur]{p_j}\right)
\end{equation}
with the initial condition $[t^0] \Hur = z p_1$. Note that the classical join-cut equation~\eqref{eq:classicaljoincut}  and the monotone join-cut equation given in \autoref{thm:joincut} have exactly the same differential forms on the right-hand side, but differ on the left-hand side, where the differentiated variable is $t$ in one case, and $z$ in the other.

In~\cite{goulden-guay-paquet-novak12a}, we used the monotone join-cut equation to obtain the following explicit formula for the genus zero monotone Hurwitz numbers.

\begin{theorem}[\cite{goulden-guay-paquet-novak12a}]\label{thm:gzformula}
  The genus zero monotone single Hurwitz number $\mon_0(\alpha)$, $\alpha \vdash d$ is given by
  \begin{equation*}
    \mon_0(\alpha) = \frac{d\,!}{\abs{\Aut \alpha}} (2d + 1)^{\overline{\ell(\alpha)-3}} \; \prod_{j=1}^{\ell(\alpha)} \binom{2\alpha_j}{\alpha_j},
  \end{equation*}
  where
  \[
    (2d + 1)^{\overline{k}} = (2d + 1) (2d + 2) \cdots (2d + k)
  \]
  denotes a rising product with $k$ factors, and by convention
  \[
    (2d + 1)^{\overline{k}} = \frac{1}{(2d + k + 1)^{\overline{-k}}},\qquad\qquad k<0.
  \]
\end{theorem}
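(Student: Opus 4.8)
The plan is to solve the \emph{genus-zero} specialization of the monotone join-cut equation of \autoref{thm:joincut} in closed form and then recognise the answer as the series in \autoref{thm:gzformula}. To separate $\Mon$ by genus, write $\Mon_g(z,\pp)=\sum_{\alpha}\mon_g(\alpha)\frac{z^{|\alpha|}}{|\alpha|!}p_\alpha$; the Riemann-Hurwitz rescaling $z\mapsto z/t$, $p_i\mapsto p_i/t$ sends the series \eqref{eq:defMon} to $\sum_{g\ge 0}t^{2g-2}\Mon_g$ (because $2g-2+\ell(\alpha)+|\alpha|$ is the $t$-degree and the rescaling subtracts off $\ell(\alpha)+|\alpha|$). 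Applying this substitution to both sides of \autoref{thm:joincut} and extracting the coefficient of the lowest power of $t$ (which is $t^{-3}$) yields the \emph{genus-zero monotone join-cut equation}
\[
  z\diff[\Mon_0]{z}-zp_1 \;=\; \sum_{i,j\ge 1}(i+j)p_ip_j\diff[\Mon_0]{p_{i+j}} \;+\; \sum_{i,j\ge 1}ij\,p_{i+j}\,\diff[\Mon_0]{p_i}\diff[\Mon_0]{p_j},
\]
together with $[z^0]\Mon_0=0$. The key structural simplification is that the second-order term of \autoref{thm:joincut} has dropped out: it lowers the genus by one, and there is nothing in genus $-1$. Reading the displayed equation off coefficient-by-coefficient shows it has a unique solution (induct on the degree in $z$ using the quadratic term, and within a fixed $z$-degree induct on the number of parts using the first term), so it suffices to produce one.

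Next I would encode the candidate. Grouping a partition $\alpha$ by its number of parts $\ell$ and passing from partitions to compositions (which, against the $\frac{1}{|\alpha|!}$ in $\Mon_0$, converts the weight $|\alpha|!/\abs{\Aut\alpha}$ into $1/\ell!$), the right-hand side of \autoref{thm:gzformula} becomes
\[
  \Psi(z,\pp) \;=\; \sum_{\ell\ge 1}\frac{1}{\ell!}\Bigl(\prod_{j=1}^{\ell-3}(2z\partial_z+j)\Bigr)\Phi(z,\pp)^{\ell}, \qquad \Phi(z,\pp):=\sum_{m\ge 1}\binom{2m}{m}p_mz^m,
\]
where for $\ell\le 2$ the operator is interpreted by the reciprocal convention of the statement; here one uses that $(2d+1)^{\overline{\ell-3}}$ acts on the degree-$d$ part of $\Phi^\ell$ as $\prod_{j=1}^{\ell-3}(2d+j)$. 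The $\ell=1$ term is the one-part series $U(z)=\sum_{m\ge 1}\frac{\Cat_{m-1}}{m}z^m$, and a useful warm-up — which also pins down the normalisation — is to check that $U$ solves the one-part specialisation of the equation above, which reduces precisely to the functional equation $V=z+V^2$ of the Catalan generating series for $V:=z\partial_z U=z\Cat(z)$.

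The heart of the argument is a Lagrange-inversion computation verifying that $\Psi$ solves the genus-zero monotone join-cut equation. The change of variables adapted to the central binomial coefficients is $z=\xi(1-\xi)$, equivalently $\xi=z+\xi^2=z\Cat(z)$, under which $\sum_{m\ge 0}\binom{2m}{m}z^m=(1-2\xi)^{-1}$ and $dz/d\xi=1-2\xi$; carrying the $p_m$ along, this is exactly the setting in which Lagrange inversion turns the $d$-dependent rising factorials $(2d+1)^{\overline{\ell-3}}$ into clean coefficient extractions. Equivalently, one can introduce a catalytic variable marking a single distinguished part, reduce the partial differential equation in all the $p_i$ to an ordinary functional equation for one series, solve that, and reconstruct $\Mon_0$. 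Either way, plugging the closed form into both sides and simplifying shows they agree; with $[z^0]\Psi=0$ obvious, uniqueness then gives $\Psi=\Mon_0$, which is \autoref{thm:gzformula}.

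I expect the main obstacle to be precisely this last verification: keeping honest the bookkeeping of the infinitely many variables $p_i$ and the interplay of the three combinatorial factors — $|\alpha|!/\abs{\Aut\alpha}$, the part-local $\binom{2\alpha_j}{\alpha_j}$, and the global rising factorial $(2d+1)^{\overline{\ell-3}}$ — through the Lagrange inversion, and in particular correctly handling the degenerate low-$\ell$ cases $\ell=1,2,3$, which are where the rising factorial is empty or a reciprocal and which supply the low-order data that the uniqueness argument propagates.
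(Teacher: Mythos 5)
Your overall strategy is sound and your framework is set up correctly: the rescaling $z \mapsto z/t$, $p_i \mapsto p_i/t$ does isolate $\sum_g t^{2g-2}\Mon_g$, the coefficient of $t^{-3}$ in the rescaled equation of \autoref{thm:joincut} is exactly the genus-zero equation you display (the second-order term drops out because it lowers genus by one), the double induction on $d$ and $\ell(\alpha)$ does give uniqueness, and your series $\Psi$ is a correct encoding of the claimed formula, including the degenerate cases $\ell \le 2$ and the Catalan consistency check at $\ell = 1$. Note that the present paper does not prove this statement at all — it imports it from \cite{goulden-guay-paquet-novak12a} — but the machinery recalled in \autoref{sec:review} shows how that proof actually runs: the join-cut equation is recast as the functional equation of \thmpartref{thm:pdeunique}{item:pdeunique-genus-zero} by marking a distinguished part with the lifting operator $\Delta_1$, that equation is solved \emph{constructively} in the transformed variables $q_j = p_j(1-\gamma)^{-2j}$, $y_j = x_j(1-\gamma)^{-2}$ to give the closed form~\eqref{eq:delta_zero} for $\Delta_1\Mon_0$, and coefficients are then extracted by Lagrange inversion as in \autoref{thm:LIFT}.

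The gap is that the one step carrying essentially all of the content — the verification that $\Psi$ satisfies the genus-zero join-cut equation — is asserted rather than performed (``plugging the closed form into both sides and simplifying shows they agree''). This is not a routine simplification. The cut term is manageable, but the quadratic term requires showing that the convolution
\[
\sum_{i,j \geq 1} ij\,p_{i+j}\,\diff[\Psi]{p_i}\,\diff[\Psi]{p_j}
\]
reassembles products of rising factorials $(2d'+1)^{\overline{\ell'-3}}\,(2d''+1)^{\overline{\ell''-3}}$, each based at the degree of its own factor, into a single $(2d+1)^{\overline{\ell-3}}$ with $d = d'+d''$ and $\ell = \ell'+\ell''-1$. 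Because the rising factorial depends on the degree $d$ and not just on $\ell$, this is not a formal identity in one catalytic series; taming exactly this dependence is what forces the introduction of the auxiliary variable $x_1$ and the nonlinear change of variables above in the actual proof. You name both devices (the distinguished-part reduction and the substitution $z=\xi(1-\xi)$) as options, but without committing to one and carrying the computation through — in particular through the low-$\ell$ reciprocal cases you rightly flag as delicate — the proposal establishes only that the formula is the unique solution of a well-posed equation, and reduces the theorem to an identity it does not prove.
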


\autoref{thm:gzformula} is strikingly similar to the well-known explicit formula for the genus zero Hurwitz number
\begin{equation}\label{eq:hur0}
  \hur_0(\alpha) = \frac{d\,!}{\abs{\Aut \alpha}} (d+\ell(\alpha)-2)! \, d^{\,\ell(\alpha)-3} \; \prod_{j=1}^{\ell(\alpha)} \frac{\alpha_j^{\alpha_j}}{\alpha_j!},
\end{equation}
published without proof by Hurwitz~\cite{hurwitz91} in 1891 (see also Strehl~\cite{strehl96}) and independently rediscovered and proved a century later by Goulden and Jackson~\cite{goulden-jackson97}.

\subsection{Main results}

In this paper we consider monotone Hurwitz numbers in all positive genera. For genus one, corresponding to branched covers of the sphere by the torus, we obtain the following exact formula.

\begin{theorem}\label{thm:Mongenusone}
  The genus one monotone single Hurwitz number $\mon_1(\alpha)$, $\alpha \vdash d$ is given by
  \begin{multline*}
    \mon_1(\alpha) = \frac{1}{24} \frac{d\,!}{\abs{\Aut \alpha}} \prod_{j=1}^{\ell(\alpha)} \binom{2\alpha_j}{\alpha_j} \\
    \times \left( (2d + 1)^{\overline{\ell(\alpha)}} - 3 (2d + 1)^{\overline{\ell(\alpha) - 1}} - \sum_{k = 2}^{\ell(\alpha)} (k - 2)! (2d + 1)^{\overline{\ell(\alpha) - k}} e_k(2\alpha + 1) \right),
  \end{multline*}
  where $e_k(2\alpha + 1)$ is the $k$th elementary symmetric polynomial of the values $\{2\alpha_i+1 \colon i = 1, 2, \ldots, \ell(\alpha)\}$.
\end{theorem}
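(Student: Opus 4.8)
The plan is to isolate the genus-one part of the monotone join-cut equation of \autoref{thm:joincut} and solve the resulting linear equation with the help of the genus-zero formula of \autoref{thm:gzformula}.

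First we use the Riemann-Hurwitz relation $r = 2g-2+\ell(\alpha)+d$ to split $\Mon$ according to genus. Setting
\[
  \Mon_g(z,\pp) = \sum_{d\geq 1}\frac{z^d}{d\,!}\sum_{\alpha\vdash d}\mon_g(\alpha)\,p_\alpha,
\]
one checks that $\Mon(z,t,\pp) = \sum_{g\geq 0} t^{2g-2}\,\Mon_g(tz,t\pp)$, where $t\pp = (tp_1,tp_2,\dotsc)$. Substituting this into \autoref{thm:joincut} and collecting the contribution of a fixed genus (equivalently, comparing the power of $t$ attached to a given monomial $z^d p_\alpha$), the two second-order operators behave differently: the ``join'' operator $\sum(i+j)p_ip_j\diff{p_{i+j}}$ preserves genus, the ``cut'' operator $\sum ij\,p_{i+j}\sdiff{p_i}{p_j}$ drops genus by one, and the quadratic term contributes $\sum_{g_1+g_2=g}\diff[\Mon_{g_1}]{p_i}\diff[\Mon_{g_2}]{p_j}$. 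The genus-one slice is therefore the linear inhomogeneous equation
\[
  z\diff[\Mon_1]{z} - \sum_{i,j\geq 1}(i+j)p_ip_j\diff[\Mon_1]{p_{i+j}} - 2\sum_{i,j\geq 1}ij\,p_{i+j}\diff[\Mon_0]{p_i}\diff[\Mon_1]{p_j} = \sum_{i,j\geq 1}ij\,p_{i+j}\sdiff[\Mon_0]{p_i}{p_j},
\]
with $[z^0]\Mon_1 = 0$. Since the left-hand operator raises the degree in $z$ except for its leading term $z\diff{z}$, this determines $\Mon_1$ uniquely and lets us solve for $\mon_1(\alpha)$ degree by degree in $d$.

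The substance of the argument is then (i) to evaluate the right-hand side using \autoref{thm:gzformula}, and (ii) to verify that the formula in the statement solves the equation. For (i) we would introduce the change of variables that linearizes the genus-zero generating function — the natural variables here are the shifted parts $2\alpha_i+1$ and the ``total'' $2d+1$, reflecting that $\prod_j\binom{2\alpha_j}{\alpha_j}$ has univariate generating function $(1-4z)^{-1/2}$ and that \autoref{thm:gzformula} is a product of a rising factorial of $2d+1$ with these binomial factors. In these variables the first and second $p$-derivatives of $\Mon_0$ have compact closed forms, and assembling them with the weights $ij\,p_{i+j}$ and re-collecting rising products $(2d+1)^{\overline{k}}$ gives an explicit series. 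For (ii) we substitute the ansatz that $\mon_1(\alpha)$ equals $\tfrac{1}{24}\tfrac{d\,!}{\abs{\Aut\alpha}}\prod_j\binom{2\alpha_j}{\alpha_j}$ times the polynomial appearing in the statement — a combination of the rising factorials $(2d+1)^{\overline{\ell(\alpha)-k}}$ and the elementary symmetric polynomials $e_k(2\alpha+1)$ — apply the three left-hand operators term by term, and match the forcing term from (i). Here the join operator acts on the polynomial by ``merging two parts'' of $\alpha$ and the $\Mon_0$-coupled operator by ``attaching a new part,'' and under both operations the $e_k(2\alpha+1)$ satisfy simple recurrences, so the verification reduces to a finite list of identities among the $e_k$ and the rising factorials of $2d+1$.

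The main obstacle we anticipate is the bookkeeping in (i) and (ii): pushing the genus-zero product formula through the differential and quadratic operations and re-expressing everything in the rising-factorial basis is delicate, and pinning down the constants — the overall $\tfrac{1}{24}$, the coefficient $-3$, and the factorials $(k-2)!$ — requires care. A useful device to tame the cross term $\diff[\Mon_0]{p_i}\diff[\Mon_1]{p_j}$ is a secondary substitution, the monotone analogue of the change of variables that trivializes the genus-zero one-part (disk) function; after it the left-hand operator becomes $z\diff{z}$ minus a pure join operator, the recursion is straightforward to run, and the closed form of $\mon_1(\alpha)$ can be recognized from the first few values of $d$ and then confirmed in general.
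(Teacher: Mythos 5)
Your genus stratification of the monotone join-cut equation is correct and is essentially the content of \thmpartref{thm:pdeunique}{item:pdeunique-higher-genera} specialized to $g=1$: the join term preserves genus, the cut term drops it, and the quadratic term convolves genera, so $\Mon_1$ satisfies a linear inhomogeneous equation forced by $\Mon_0$, and that equation determines $\Mon_1$ uniquely. (One small slip: the join operator does \emph{not} raise the degree in $z$ --- it preserves $d$ and lowers $\ell(\alpha)$ by one --- so the uniqueness argument needs a double induction on $d$ and then on $\ell(\alpha)$, not a single induction on $z$-degree. The conclusion is still right.)

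The genuine gap is that the two steps carrying all the mathematical content --- (i) evaluating the forcing term $\sum ij\,p_{i+j}\sdiff[\Mon_0]{p_i}{p_j}$ in closed form, and (ii) verifying that the stated ansatz satisfies the linear equation --- are announced but not performed. Neither is routine. For (i), the paper needs the entire operator calculus of \autoref{sec:review} (the change of variables $q_j = p_j(1-\gamma)^{-2j}$, the auxiliary series $\gamma,\eta,\eta_j$, and \autoref{prop:DelSqMon}) to put the genus-zero second-derivative data into a usable form. For (ii), your claim that the $e_k(2\alpha+1)$ ``satisfy simple recurrences'' under merging two parts is not substantiated: merging $i,j \mapsto i+j$ replaces the pair of weights $(2i+1)(2j+1)$ by the single weight $2(i+j)+1$, and the discrepancy $4ij$ must be shown to cancel against the cross term and the forcing term; likewise the cross term couples the full genus-zero rising factorial $(2d_1+1)^{\overline{\ell_1-3}}$ to the genus-one rising factorials, and recombining these into $(2d+1)^{\overline{\ell-k}}$ with $d=d_1+d_2$ is exactly the hard identity. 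Until these identities are written down and proved, the argument is a plan rather than a proof. For contrast, the paper proceeds constructively rather than by verification: it solves for $\Delta_1\Mon_1$ in the transformed variables, inverts $\Delta_1$ by the operator $\Phi$ and an integration in $t$ to get $\Mon_1 = \tfrac{1}{24}\log\tfrac{1}{1-\eta} - \tfrac{1}{8}\log\tfrac{1}{1-\gamma}$, and then extracts $[p_\alpha]$ via the multivariate Lagrange Implicit Function Theorem (\autoref{thm:LIFT}) --- that last step is precisely where the rising factorials $(2d+1)^{\overline{\ell-k}}$ and the symmetric functions $e_k(2\alpha+1)$ emerge, so the Lagrange inversion (or an equivalent device) is doing real work that your outline does not yet replace.
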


For arbitrary genus $g\geq 0$, let
\begin{equation}\label{eqn:genusGFmono}
  \Mon_g(\pp ) = \sum_{d \geq 1} \sum_{\alpha \vdash d} \mon_g(\alpha) \frac{p_\alpha}{d\,!}.
\end{equation}
Our main result, stated below, gives explicit forms for these genus-specific generating functions in all positive genera.

\begin{theorem}\label{thm:main}
  Let $\qq = (q_1, q_2, \ldots)$ be a countable set of formal power series in the indeterminates $\pp = (p_1, p_2, \ldots)$, defined implicitly by the relations
  \begin{equation}\label{eq:pqrel}
    q_j = p_j (1-\gamma )^{-2j}, \qquad j \geq 1,
  \end{equation}
  where $\gamma, \eta, \eta_j$, $j\geq 1$ are formal power series defined by
  \begin{equation*}
    \gamma = \sum_{k \geq 1} \binom{2k}{k} q_k, \quad
    \eta   = \sum_{k \geq 1} (2k + 1) \binom{2k}{k} q_k, \quad
    \eta_j = \sum_{k \geq 1} (2k + 1) k^j \binom{2k}{k} q_k.
  \end{equation*}
  \begin{enumerate}[(i)]
    \item\label{item:genus-one}
      The generating function for genus one monotone Hurwitz numbers is given by
      \[
        \Mon_1 = \tfrac{1}{24} \log\frac{1}{1 - \eta} - \tfrac{1}{8} \log\frac{1}{1 - \gamma}.
      \]
    
    \item\label{item:rational-form}
      For $g \geq 2$, the generating function for genus $g$ monotone Hurwitz numbers is given by
      \[
        \Mon_g = -c_{g,(0)} + \sum_{d = 0}^{3g - 3} \sum_{\alpha \vdash d} \frac{c_{g,\alpha} \, \eta_\alpha}{(1 - \eta)^{\ell(\alpha) + 2g-2}},
      \]
      where the $c_{g,\alpha}$ are rational constants.
    
    \item\label{item:bernoulli}
      For $g \geq 2$, the rational constant $c_{g,(0)}$ is given by
      \[
        c_{g,(0)} = \frac{-B_{2g}}{2g (2g - 2)},
      \]
      where $B_{2g}$ is a Bernoulli number.
  \end{enumerate}
\end{theorem}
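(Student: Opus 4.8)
The plan is to turn the monotone join-cut equation of \autoref{thm:joincut} into a genus-by-genus recursion, pass to the variables $\qq$ of \eqref{eq:pqrel}, and solve the resulting linear equations by induction on $g$. Using the Riemann-Hurwitz relation $r=2g-2+\ell(\alpha)+d$, the substitution $z\mapsto z/t$, $p_j\mapsto p_j/t$ turns $\Mon$ into $\sum_{g\geq0}t^{2g-2}G_g$ with $G_g(z,\pp)=\sum_{d\geq1}z^d\sum_{\alpha\vdash d}\mon_g(\alpha)\,p_\alpha/d\,!$ and $G_g|_{z=1}=\Mon_g$. Substituting into a rescaled form of \autoref{thm:joincut} and comparing coefficients of $t^{2g-2}$ separates the equation by genus: the genus-zero part is the quasilinear equation for $G_0$ solved in~\cite{goulden-guay-paquet-novak12a}, whose solution is encoded precisely by the relation $\gamma=\sum_k\binom{2k}{k}p_k(1-\gamma)^{-2k}$, i.e.\ by \eqref{eq:pqrel}; and for $g\geq1$, moving to the left the two ``extreme'' ($g_1=0$ or $g_2=0$) summands of the convolution produced by the $\diff[\Mon]{p_i}\diff[\Mon]{p_j}$ term gives, after setting $z=1$, a \emph{first-order linear} equation $\mathcal{L}\Mon_g=\tilde R_g$, where
\[
  \mathcal{L}=\tfrac12\sum_{j\geq1}j\,p_j\diff[]{p_j}-\sum_{i,j\geq1}ij\,p_{i+j}\bigl(\diff[\Mon_0]{p_i}\bigr)\diff[]{p_j}-\tfrac12\sum_{i,j\geq1}(i+j)p_ip_j\diff[]{p_{i+j}},
\]
and $\tilde R_g$ is explicit in terms of $\sdiff[\Mon_{g-1}]{p_i}{p_j}$ and products $\diff[\Mon_{g_1}]{p_i}\diff[\Mon_{g_2}]{p_j}$ with $g_1+g_2=g$, $1\leq g_1\leq g-1$. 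Since $\mathcal{L}$ annihilates constants, respects the total-degree grading, and on each finite-dimensional graded piece acts (once the couplings to lower degree are absorbed into $\tilde R_g$) as a nonzero scalar minus a nilpotent operator, it is invertible there, so $\Mon_g$ is pinned down by $\Mon_g|_{\pp=0}=0$.

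\emph{The operator in the $\qq$-variables, and genus one.} Rewriting everything in the variables $\qq$ and the series $\gamma,\eta,\eta_j$ — the role of \eqref{eq:pqrel} being precisely to linearize the genus-zero ``disk'' data — one computes, using $\sum_k\binom{2k}{k}x^k=(1-4x)^{-1/2}$ together with \autoref{thm:gzformula}, that the coefficients of $\mathcal{L}$ become rational in $\gamma$ and that $\mathcal{L}$ itself becomes, up to a nonzero rational factor, an Euler-type operator in $\eta$. Consequently $\mathcal{L}^{-1}$ maps the graded ring $\QQ[\eta_1,\eta_2,\dotsc]\bigl[(1-\eta)^{-1}\bigr]$ into itself, raising the power of $(1-\eta)^{-1}$ by a bounded amount and not increasing the total ``$j$-weight'' of the $\eta_j$-monomials, while $\diff[]{p_i}$ and $\sdiff[]{p_i}{p_j}$ act on this ring by raising the $j$-weight by at most $3$ once the $p_{i+j}$ factor accompanying them in $\tilde R_g$ is taken into account. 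For $g=1$ one solves $\mathcal{L}\Mon_1=\tfrac12\sum_{i,j}ij\,p_{i+j}\sdiff[\Mon_0]{p_i}{p_j}$ directly: the right side is an explicit rational function of $\gamma$ and $\eta$, and $\mathcal{L}^{-1}$ of it, normalized by $\Mon_1|_{\pp=0}=0$, is $\tfrac1{24}\log\tfrac1{1-\eta}-\tfrac18\log\tfrac1{1-\gamma}$, which is \partref{item:genus-one}.

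\emph{Induction in higher genus.} For $g\geq2$, assume the stated forms for $\Mon_1,\dotsc,\Mon_{g-1}$, with $j$-weight $\leq3g'-3$ in each genus $g'\geq2$. The weight estimates above then place $\tilde R_g$ in the span of the monomials $\eta_\alpha(1-\eta)^{-m}$ with $|\alpha|\leq3g-3$: the $\sdiff[\Mon_{g-1}]{p_i}{p_j}$ contribution has weight at most $(3g-6)+3=3g-3$, and each product $\diff[\Mon_{g_1}]{p_i}\diff[\Mon_{g_2}]{p_j}$ at most $(3g_1-3)+(3g_2-3)+3=3g-3$. Applying $\mathcal{L}^{-1}$ stays in this span and preserves the degree bound, and adjusting by a constant to enforce $\Mon_g|_{\pp=0}=0$ completes \partref{item:rational-form}. (That $\log\tfrac1{1-\gamma}$ does not reappear for $g\geq2$, the $\gamma$-dependence being absorbed into $\eta$ after the first application of $\mathcal{L}^{-1}$, is the one point in the bookkeeping that must be checked carefully.)

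\emph{The Bernoulli constant, and the main obstacle.} By \partref{item:rational-form}, $c_{g,(0)}=-\bigl(\mathcal{L}^{-1}\tilde R_g\bigr)\big|_{\pp=0}$ is the coefficient of the unique $\eta_j$-free term. To evaluate it, specialize $p_j\mapsto\delta_{j,1}p$: then $\eta_j=\eta$ for every $j$ and $\gamma=\eta/3$, so \partref{item:rational-form} collapses to $-c_{g,(0)}+(1-\eta)^{2-2g}P_g\bigl(\tfrac{\eta}{1-\eta}\bigr)$ with $P_g(x)=\sum_{\alpha}c_{g,\alpha}x^{\ell(\alpha)}$ and $P_g(0)=c_{g,(0)}$, while the left side becomes $\sum_{d\geq1}\mon_g(1^d)\,p^d/d\,!$, the generating function for the numbers $\mon_g(1^d)$ of monotone transitive factorizations of the identity on $d$ points. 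That generating function is accessible in closed form through the Jucys-Murphy correspondence, via the expansion of $\prod_{\Box}(1-tc(\Box))^{-1}$ over the contents $c(\Box)$ of partitions; re-expanding it in $\eta$ and extracting $c_{g,(0)}$, the Bernoulli number $-B_{2g}/(2g(2g-2))$ emerges from the asymptotics of the relevant content sums (equivalently, from $\log\Gamma$-type expansions). The two steps I expect to be hardest are proving that $\mathcal{L}^{-1}$ genuinely preserves the finite rational-in-$\eta$ ansatz with the \emph{sharp} bound $3g-3$ — the structural core of \partref{item:rational-form} — and carrying out this content-sum and Bernoulli computation for \partref{item:bernoulli}.
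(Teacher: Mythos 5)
Your overall strategy---genus-separating the join-cut equation, passing to the $\qq$-variables of \eqref{eq:pqrel}, and inducting on $g$ with degree bookkeeping in the $\eta_j$'s---is the same as the paper's, and your operator $\mathcal{L}$ is a correct reformulation of the recursion in \thmpartref{thm:pdeunique}{item:pdeunique-higher-genera}. But the structural core of part~\partref{item:rational-form} is asserted rather than proved, and you say so yourself. The claim that $\mathcal{L}$, rewritten in the $\qq$-variables, ``becomes an Euler-type operator in $\eta$'' whose inverse preserves $\QQ[\eta_1,\eta_2,\ldots][(1-\eta)^{-1}]$ without increasing the $j$-weight is exactly what needs proof, and it is not a routine computation: the individual operators $\diff{p_i}$ do not act within that ring, and controlling them requires packaging them into a generating function in an auxiliary variable (the paper's $\Delta_1$, $\Pi_2$, $\Split_{1\to 2}$), proving local nilpotency and weighted-degree preservation of the resulting operator $\T$ on the larger ring $\mcR$ (\autoref{thm:TonR}, \autoref{prop:DelR}, \autoref{thm:delta_H}), and then inverting $\Delta_1$ by an Euler-operator integral. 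Moreover, your parenthetical worry is well founded and is not mere bookkeeping: showing that no $\log\frac{1}{1-\gamma}$ term and no lower power of $(1-\eta)^{-1}$ survives for $g\geq 2$---i.e.\ that the exponent is exactly $\ell(\alpha)+2g-2$, not merely bounded---rests on a genuine cancellation (the paper's identities \eqref{eq:cond1} and \eqref{eq:cond2}, obtained by evaluating at $y_1=0$ and at $(1-4y_1)^{-1}=0$), which your sketch has no mechanism to produce. Your weight count $(3g-6)+3$ and $(3g_1-3)+(3g_2-3)+3$ also conflates the $\eta_j$-weight with the degree in the auxiliary variable that the paper tracks separately inside $\mcR$, so even the bound $3g-3$ is not yet established by your argument.

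For part~\partref{item:bernoulli} there is a second gap. Your specialization $p_j\mapsto\delta_{j,1}p$ collapses every $\eta_j$ to $\eta$, so the specialized right-hand side mixes all the coefficients $\sum_{\ell(\alpha)=\ell}c_{g,\alpha}$, and isolating $c_{g,(0)}$ then requires full knowledge of the series $\sum_d \mon_g(1^d)p^d/d!$; no closed form for the transitive numbers $\mon_g(1^d)$ is established (the character/content expansion $\prod_{\Box}(1-tc(\Box))^{-1}$ gives the disconnected count, and extracting the genus-$g$ connected constant from it is itself a substantial asymptotic computation). The paper avoids this entirely by extracting the \emph{linear} part in $\pp$, i.e.\ the one-cycle numbers $\mon_g((d))$: this kills all terms with $\ell(\alpha)\geq 2$ and all higher powers of $(1-\eta)^{-1}$, leaves $(2d)!/d!$ times a polynomial in $d$ whose value at $d=0$ is $(2g-2)c_{g,(0)}$, and then quotes the closed formula of Matsumoto and Novak for $\mon_g((d))$, from which the Bernoulli number falls out of $(\sinh(z/2)/(z/2))^{-2}$. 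You would need either to carry out your content-sum asymptotics in full or to switch to the one-cycle specialization to close this part.
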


Note that our proof of \autoref*{thm:main} is not just an existence proof; the computations to determine the coefficients $c_{g,\alpha}$ are quite feasible in practice if the coefficients for lower values of $g$ are known. For example, for genus $g=2$, we obtain the expression
\begin{equation}\label{eq:MonH2}
6!\cdot\,\vec{\mathbf{H}}_2 = \Big( -3 + \frac{3}{(1 - \eta)^2}\Big) + \frac{5 \eta_3 - 6 \eta_2 - 5 \eta_1}{(1 - \eta)^3} + \frac{29 \eta_1 \eta_2 - 10 \eta_1^2}{(1 - \eta)^4} + \frac{28 \eta_1^3}{(1 - \eta)^5}.
\end{equation}
For genus $g=3$, the corresponding expression for $\Mon_3$ is given in \autoref{sec:forms}.

A key consequence of \autoref{thm:main} is that it implies the polynomiality of the monotone single Hurwitz numbers themselves.

\begin{theorem}\label{thm:polynomiality}
  For each pair $(g,\ell)$ with $(g,\ell) \notin \{(0,1), (0,2)\}$, there is a polynomial $\vec{P}_{g,\ell}$ in $\ell$ variables such that, for all partitions $\alpha \vdash d$, $d\geq 1$, with $\ell$ parts,
  \[
    \mon_g(\alpha) = \frac{d\,!}{\abs{\Aut \alpha}}  \vec{P}_{g,\ell}(\alpha_1, \dots, \alpha_\ell)\; \prod_{j=1}^\ell \binom{2\alpha_j}{\alpha_j}.
  \]
\end{theorem}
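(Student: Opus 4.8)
The plan is to read off the polynomiality from the explicit generating functions of \autoref{thm:main}. Since distinct partitions index distinct monomials $p_\alpha$, \eqref{eqn:genusGFmono} gives $\mon_g(\alpha)=d!\,[p_\alpha]\Mon_g$, so it suffices to produce a (necessarily symmetric) polynomial $\vec P_{g,\ell}$ with $[p_\alpha]\Mon_g=\abs{\Aut\alpha}^{-1}\vec P_{g,\ell}(\alpha_1,\dots,\alpha_\ell)\prod_j\binom{2\alpha_j}{\alpha_j}$. For $g=0$, $\ell\geq 3$ this is immediate from \autoref{thm:gzformula} with $\vec P_{0,\ell}(\alpha)=(2\abs{\alpha}+1)^{\overline{\ell-3}}$, a genuine polynomial in $\alpha_1,\dots,\alpha_\ell$ because $\abs{\alpha}=\alpha_1+\dots+\alpha_\ell$; the same formula shows the claim really does fail for $\ell\in\{1,2\}$, since then the rising factorial has $\leq 0$ factors and is not polynomial in $\abs{\alpha}$. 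So the remaining work is for $g\geq 1$, where I would invoke \autoref{thm:main}.

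The first step is to convert the coefficient extraction from the $\pp$-variables to the $\qq$-variables via multivariate (Lagrange--Good) inversion. Because $[p_\alpha]\Mon_g$ is unchanged by setting $p_k=0$ for $k$ not a part of $\alpha$, one works with finitely many variables, and the substitution $q_j=p_j(1-\gamma)^{-2j}$, $\gamma=\sum_k\binom{2k}{k}q_k$, is of Lagrange--Good type since each $\phi_j:=(1-\gamma)^{-2j}$ has constant term $1$. The Jacobian factor is $\det\bigl(\delta_{ij}-\tfrac{q_j}{\phi_i}\tfrac{\partial\phi_i}{\partial q_j}\bigr)$; a short computation gives the $(i,j)$ entry $\delta_{ij}-\tfrac{2i\binom{2j}{j}q_j}{1-\gamma}$, a rank-one perturbation of the identity, and using $\sum_k 2k\binom{2k}{k}q_k=\eta-\gamma$ its determinant collapses to $\tfrac{1-\eta}{1-\gamma}$. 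Since $\prod_i\phi_i^{m_i(\alpha)}=(1-\gamma)^{-2d}$, Lagrange--Good yields the clean identity
\[
  [p_\alpha]\Mon_g = [q_\alpha]\!\left(\Mon_g\cdot\frac{1-\eta}{(1-\gamma)^{2d+1}}\right),
\]
where now $\Mon_g$ is the explicit expression of \autoref{thm:main} in $\gamma,\eta,\eta_1,\eta_2,\dots$, and $[q_\alpha]$ denotes the coefficient of $q_\alpha=\prod_j q_{\alpha_j}=\prod_k q_k^{m_k(\alpha)}$.

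The second step is to expand and extract. Each of $\gamma,\eta,\eta_1,\eta_2,\dots$ is a \emph{linear} form in the $q_k$ whose $q_k$-coefficient is $\binom{2k}{k}$ times a polynomial in $k$, and $(1-\gamma)^{-2d-1}$, $(1-\eta)^{-m}$, $\log\tfrac{1}{1-\eta}$ expand as power series in these forms with coefficients that are polynomials in $d$, hence in $\alpha_1,\dots,\alpha_\ell$. The monomial $q_\alpha$ has exactly $\ell=\ell(\alpha)$ factors, so $[q_\alpha]$ retains, from each product of linear forms, exactly $\ell$ of them; hence for fixed $(g,\ell)$ only finitely many terms survive. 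For a product $L_1\cdots L_\ell$ of such forms with $L_s=\sum_k\binom{2k}{k}f_s(k)\,q_k$, a direct count gives
\[
  [q_\alpha]\bigl(L_1\cdots L_\ell\bigr)=\frac{1}{\abs{\Aut\alpha}}\biggl(\,\sum_{\sigma\in\group{S}_\ell}\prod_{s=1}^\ell f_s(\alpha_{\sigma(s)})\biggr)\prod_{j=1}^\ell\binom{2\alpha_j}{\alpha_j},
\]
since each sequence with underlying multiset $\{\alpha_1,\dots,\alpha_\ell\}$ is hit exactly $\abs{\Aut\alpha}$ times by $\sigma\mapsto(\alpha_{\sigma(1)},\dots,\alpha_{\sigma(\ell)})$, and the bracketed sum is manifestly a symmetric polynomial in $\alpha_1,\dots,\alpha_\ell$. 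Summing the finitely many surviving contributions (the binomial prefactors being polynomials in $d=\alpha_1+\dots+\alpha_\ell$) then gives $[p_\alpha]\Mon_g=\abs{\Aut\alpha}^{-1}\vec P_{g,\ell}(\alpha)\prod_j\binom{2\alpha_j}{\alpha_j}$ with $\vec P_{g,\ell}$ a symmetric polynomial, and multiplying by $d!$ finishes the proof.

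The main obstacle I anticipate is the Lagrange--Good step: identifying the correct inversion formula in this (essentially) infinite-variable setting and verifying that its Jacobian collapses to $\tfrac{1-\eta}{1-\gamma}$, which is precisely what keeps the $\qq$-side expression simple enough to extract coefficients from. The combinatorial bookkeeping in the last step — checking that every surviving term is exactly $\abs{\Aut\alpha}^{-1}$ times a symmetric polynomial times $\prod_j\binom{2\alpha_j}{\alpha_j}$ — is the other point requiring care, but once the symmetrization identity above is in place it is routine, and everything else is mechanical.
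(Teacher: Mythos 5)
Your proposal is correct and follows essentially the same route as the paper: it establishes the same Lagrange--Good inversion identity $[p_\alpha]F=[q_\alpha]\frac{(1-\eta)F}{(1-\gamma)^{2d+1}}$ (including the rank-one collapse of the Jacobian to $\tfrac{1-\eta}{1-\gamma}$, which is exactly the paper's Lemma~\ref{thm:LIFT}), and then extracts $[q_\alpha]$ from the explicit forms of \autoref{thm:main} using linearity of $\gamma,\eta,\eta_j$ in $\qq$. Your symmetrization identity with the $\abs{\Aut\alpha}$ count makes explicit a step the paper treats as routine, but the argument is the same.
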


\subsection{Comparison with the classical Hurwitz case}

For genus one,  the explicit formula for the monotone Hurwitz number given in \autoref{thm:Mongenusone} is strongly reminiscent of the known formula for the Hurwitz number, given by
\begin{multline*}
  \hur_1(\alpha) = \frac{1}{24} \frac{d\,!}{\abs{\Aut \alpha}} (d + \ell(\alpha))! \prod_{j=1}^{\ell(\alpha)} \frac{\alpha_j^{\alpha_j}}{\alpha_j!} \\
  \times \left( d^{\ell(\alpha)} - d^{\ell(\alpha) - 1} - \sum_{k=2}^{\ell(\alpha)} (k - 2)! d^{\ell(\alpha) - k} e_k(\alpha) \right),
\end{multline*}
which was conjectured in~\cite{goulden-jackson-vainshtein00} and proved by Vakil~\cite{vakil01} (see also~\cite{goulden-jackson99a}).

The expressions for $\Mon_g$ given in \autoref{thm:main} above should be compared with the analogous explicit forms for the generating series
\[
  \Hur_g(\pp) = \sum_{d \geq 1} \sum_{\alpha \vdash d} \frac{\hur_g(\alpha)}{(2g - 2 + \ell(\alpha) + d)!} \frac{p_\alpha }{d\,!}
\]
for the classical Hurwitz numbers. Adapting notation from previous works~\cite{goulden-jackson99a, goulden-jackson99b, goulden-jackson-vakil01} in order to highlight this analogy, let $\rr = (r_1, r_2, \ldots)$ be a countable set of formal power series in the indeterminates $\pp = (p_1, p_2, \ldots)$, defined implicitly by the relations
  \begin{equation}\label{eq:prrel}
    r_j = p_j e^{j\delta}, \qquad j \geq 1,
  \end{equation}
and let $\delta, \phi, \phi_j$, $j\geq 1$ be formal power series defined by
  \begin{equation*}
    \delta = \sum_{k \geq 1} \frac{k^k}{k!} r_k, \qquad
    \phi   = \sum_{k \geq 1} \frac{k^{k+1}}{k!} r_k, \qquad
    \phi_j = \sum_{k \geq 1} \frac{k^{k+j+1}}{k!} r_k.
  \end{equation*}

Then, the genus $g=1$ Hurwitz generating series is~\cite{goulden-jackson99a}
\[
  \Hur_1 = \tfrac{1}{24} \log\frac{1}{1 - \phi} - \tfrac{1}{24}\delta ,
\]
and for $g \geq 2$ we have~\cite{goulden-jackson-vakil01}
\begin{equation}\label{eq:hurwitz-rational}
  \Hur_g =  \sum_{d = 2g-3}^{3g - 3} \sum_{\alpha \vdash d} \frac{a_{g,\alpha} \phi_{\alpha}}{(1 - \phi)^{\ell(\alpha) + 2g-2}},
\end{equation}
where the $a_{g,\alpha}$ are rational constants. For example, when $g=2$ we obtain~\cite{goulden-jackson99b}
\begin{equation}\label{eq:ClaH2}
2^3\cdot 6!\, \mathbf{H}_2 = \frac{5 \phi_3 - 12 \phi_2 + 7 \phi_1}{(1 - \phi)^3} + \frac{29 \phi_1 \phi_2 - 25 \phi_1^2}{(1 - \phi)^4} + \frac{28 \phi_1^3}{(1 - \phi)^5}.
\end{equation}
For genus $g=3$, the corresponding expression for $\Hur_3$ is given in \autoref{sec:forms}.  

\autoref{thm:polynomiality} is the exact analogue of polynomiality for the classical Hurwitz numbers, originally conjectured in~\cite{goulden-jackson-vainshtein00}, which asserts the existence of polynomials $P_{g,\ell}$ such that, for all partitions $\alpha \vdash d$ with $\ell$ parts,
\begin{equation}\label{eqn:ELSV}
  \hur_g(\alpha) = \frac{d\,!}{\abs{\Aut \alpha}} (d + \ell + 2g - 2)! P_{g,\ell}(\alpha_1, \dots, \alpha_\ell) \prod_{j=1}^\ell \frac{\alpha_j^{\alpha_j}}{\alpha_j!}.
\end{equation}

\subsection{A possible geometric interpretation?}

The only known proof of~\autoref{eqn:ELSV} relies on the ELSV formula~\cite{ekedahl-lando-shapiro-vainshtein01},
\begin{equation}\label{eq:elsvf}
  P_{g,\ell}(\alpha_1, \dots, \alpha_\ell) = \int\limits_{\cmbar_{g,\ell}} \frac{1 - \lambda_1 + \cdots + (-1)^g \lambda_g}{(1 - \alpha_1 \psi_1) \cdots (1 - \alpha_\ell \psi_\ell)}.
\end{equation}
Here $\cmbar_{g,\ell}$ is the (compact) moduli space of stable $\ell$-pointed genus $g$ curves, $\psi_1$, $\dots$, $\psi_\ell$ are (complex) codimension 1 classes corresponding to the $\ell$ marked points, and $\lambda_k$ is the (complex codimension $k$) $k$th Chern class of the Hodge bundle. Equation~\eqref{eq:elsvf} should be interpreted as follows: formally invert the denominator as a geometric series; select the terms of codimension $\dim \cmbar_{g,\ell}=3g - 3 + \ell$; and ``intersect'' these terms on $\cmbar_{g,\ell}$. 

In contrast to this, our proof of \autoref{thm:polynomiality} is entirely algebraic and makes no use of geometric methods. A geometric approach to the monotone Hurwitz numbers would be highly desirable. The form of the rational expression given in part~\partref{item:rational-form} of \autoref{thm:main}, in particular its high degree of similarity with the corresponding rational expression~\eqref{eq:hurwitz-rational} for the generating series of the classical Hurwitz numbers, seems to suggest the possibility of an ELSV-type formula for the polynomials $\vec{P}_{g,\ell}$. Further evidence in favour of such a formula is obtained from the values of the rational coefficients that appear in these expressions. First, the Bernoulli numbers have known geometric significance. Second, comparing the expressions~\eqref{eq:MonH2} and~\eqref{eq:ClaH2} for genus $2$ and the expressions in \autoref{sec:forms} for genus $3$ gives strong evidence for the conjecture (now a theorem, see~\cite{guay-paquet12}) that
\begin{equation}\label{eq:power-scaling}
  c_{g,\alpha} = 2^{3g-3} a_{g,\alpha}, \qquad \alpha \vdash 3g-3,
\end{equation}
where $c_{g,\alpha}$ and $a_{g,\alpha}$ are the rational coefficients that appear in \thmpartref{thm:main}{item:rational-form} and~\eqref{eq:hurwitz-rational}, respectively.
But the ELSV formula implies that the coefficients $a_{g,\alpha}$ in the rational form~\eqref{eq:hurwitz-rational} are themselves Hodge integral evaluations, and for the top terms $\alpha \vdash 3g - 3$ these Hodge integrals are free of $\lambda$-classes---the Witten case. Equation~\eqref{eq:power-scaling}, which deals precisely with the case $\alpha \vdash 3g - 3$, might be a good starting point for the formulation of an ELSV-type formula for the monotone Hurwitz numbers.

\subsection{Organization}

The bulk of this paper is dedicated to proving parts~\partref{item:genus-one} and~\partref{item:rational-form} of \autoref{thm:main}, which give an explicit expression for the generating function $\Mon_1$ and a rational form for $\Mon_g$, $g \geq 2$. Part~\partref{item:bernoulli} of \autoref{thm:main}, which specifies the lowest order term in the rational form for $\Mon_g$, $g\geq 2$, follows directly from a result of Matsumoto and Novak~\cite{matsumoto-novak10}. For this reason, we present this proof first, in \autoref{sec:bernoulli}.

The necessary definitions and results from our previous paper~\cite{goulden-guay-paquet-novak12a} dealing with the genus zero case are given in \autoref{sec:review}, together with additional technical machinery and results. In \autoref{sec:proofs}, we introduce a particular ring of polynomials, and establish the general form of a transformed version of the generating function $\Mon_g$, $g\geq 1$. In \autoref{sec:gfcns}, we invert this transform, and thus prove parts~\partref{item:genus-one} and~\partref{item:rational-form} of \autoref{thm:main}. In \autoref{sec:explicit}, we use Lagrange's Implicit Function Theorem to evaluate the coefficients in $\Mon_g$, and thus prove \autoref{thm:Mongenusone} and \autoref{thm:polynomiality}.
Finally, the generating functions $\Mon_3$ and $\Hur_3$ are given in \autoref{sec:forms}.

\subsection{Acknowledgements}

It is a pleasure to acknowledge helpful conversations with our colleagues Sean Carrell and David Jackson, Waterloo, and Ravi Vakil, Stanford. J.~N.~would like to acknowledge email correspondence with Mike Roth, Queen's. The extensive numerical computations required in this project were performed using Sage~\cite{sage}, and its algebraic combinatorics features developed by the Sage-Combinat community~\cite{sage-combinat}.

\section{Bernoulli numbers}\label{sec:bernoulli}

The computation of the constant term $c_{g,(0)}$ for $g \geq 2$ in \autoref{thm:main} relies on a general formula of Matsumoto and Novak (see~\cite{matsumoto-novak10}) for monotone single Hurwitz numbers for the special case of permutations with a single cycle. We give the proof here, as it does not depend on the machinery needed to prove the rest of \autoref{thm:main}.

\begin{proof}[Proof of {\autoref*{thm:main}(\ref*{item:bernoulli})}]
  To compute the monotone single Hurwitz number for a permutation with a single cycle, we can expand the expression for $\Mon_g$ given in \autoref{thm:main} as a power series in $\eta, \eta_1, \eta_2, \ldots$, and then further expand this as a power series in $\pp = (p_1, p_2, \ldots)$, throwing away any terms of degree higher than 1 at each step. For the partition $(d)$ consisting of a single part, this yields the expression
  \begin{align*}
    \mon_g((d))
      &= d\,! [p_d] \Mon_g = d\,! [p_d] \left( (2g - 2) c_{g,(0)} \eta + \sum_{k = 1}^{3g - 3} c_{g,(k)} \eta_k \right) \\
      &= \frac{(2d)!}{d\,!} \left( (2g - 2) c_{g,(0)} \, (2d + 1) + \sum_{k = 1}^{3g - 3} c_{g,(k)} (2d + 1) d^k \right).
  \end{align*}
  For fixed $g$, this expression is $(2d)! / d\,!$ times a polynomial in $d$, and evaluating this polynomial at $d = 0$ gives $(2g - 2) c_{g,(0)}$. In contrast, according to Matsumoto and Novak's formula~\cite[Equation (48)]{matsumoto-novak10}, we have
  \[
    \mon_g((d)) = \frac{(2d)!}{d\,!} \binom{2g - 2 + 2d}{2g - 2} \frac{1}{2g (2g - 1)} \left[\frac{z^{2g}}{(2g)!}\right] \left( \frac{\sinh(z/2)}{z/2} \right)^{2d - 2}.
  \]
  Again, for fixed $g$, this expression is $(2d)! / d\,!$ times a polynomial in $d$. Evaluating this polynomial at $d = 0$ gives
  \begin{align*}
    (2g - 2) c_{g,(0)}
      &= \frac{1}{2g (2g - 1)} \left[\frac{z^{2g}}{(2g)!}\right] \left( \frac{\sinh(z/2)}{z/2} \right)^{-2} \\
      &= \frac{1}{2g (2g - 1)} \left[\frac{z^{2g}}{(2g)!}\right] \left( z\diff{z} - 1 \right) \frac{-z}{e^z - 1} = \frac{-B_{2g}}{2g},
  \end{align*}
  since $z / (e^z - 1)$ is the exponential generating function for the Bernoulli numbers.
\end{proof}

\section{Algebraic methodology and a change of variables}\label{sec:review}

\subsection{Algebraic methodology}\label{sec:algmeth}

In our previous paper~\cite{goulden-guay-paquet-novak12a} on monotone Hurwitz numbers in genus zero, we introduced three (families of) operators: the \emph{lifting} operators $\Delta_i$, the \emph{projection} operators $\Pi_i$, and the \emph{splitting} operators $\Split_{i \to j}$, which involve the indeterminates $\pp =  (p_1, p_2, \ldots)$ and a collection of auxiliary indeterminates $\xx = (x_1, x_2, \ldots)$. These operators were defined by
\begin{align*}
&\Delta_i = \sum_{k \geq 1} k x_i^k \diff{p_k}, \\
&\Pi_i = [x_i^0] + \sum_{k \geq 1} p_k [x_i^k], \\
&\Split_{i \to j} F(x_i) = \frac{x_j F(x_i) - x_i F(x_j)}{x_i - x_j} + F(0).
\end{align*}

In terms of these operators, the genus-specific generating functions $\Mon_g$ defined in~\autoref{eqn:genusGFmono} for $g\geq 0$ are characterized by the following result, which is essentially a reworking of the monotone join-cut equation given in \autoref{thm:joincut}.
\begin{theorem}[{\cite{goulden-guay-paquet-novak12a}}]\label{thm:pdeunique}
  \ \par 
  \begin{enumerate}[(i)]
    \item\label{item:pdeunique-genus-zero}
      The generating function $\Delta_1 \Mon_0$ is the unique formal power series solution in the ring $\QQ[[\pp, x_1]]$ of
      \begin{equation*}
        \Delta_1 \Mon_0 = \Pi_2 \Split_{1 \to 2} \Delta_1 \Mon_0 + (\Delta_1 \Mon_0)^2 + x_1
      \end{equation*}
      with the initial condition $[p_0 x_1^0] \Delta_1 \Mon_0 = 0$.
    
    \item\label{item:pdeunique-higher-genera}
      For $g \geq 1$, $\Delta_1 \Mon_g$ is uniquely determined in terms of $\Delta_1 \Mon_i$, $0\leq i\leq g-1$, by
      \[
        \left( 1 - 2 \Delta_1 \Mon_0 - \Pi_2 \Split_{1 \to 2} \right) \Delta_1 \Mon_g = \Delta_1^2 \Mon_{g-1} + \sum_{g'=1}^{g-1} \Delta_1 \Mon_{g'} \, \Delta_1 \Mon_{g-g'}.
      \]
    
    \item
      For $g \geq 0$, the generating function $\Mon_g$ is uniquely determined by the generating function $\Delta_1 \Mon_g$ and the fact that $[p_0] \Mon_g = 0$.
  \end{enumerate}
\end{theorem}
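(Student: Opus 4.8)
The plan is to derive all three parts from a join-cut analysis of monotone Hurwitz numbers carried out at the level of the \emph{lifted} series $\Delta_1\Mon_g$, sorting the resulting identities by genus. Concretely, I would begin from the scalar monotone join-cut equation of \autoref{thm:joincut}. Separating the genus grading --- since $r = 2g - 2 + \ell(\alpha) + d$, the substitution $z \mapsto z/t$, $p_j \mapsto p_j/t$ turns $\Mon$ into $\sum_{g \geq 0} t^{2g-2}\Mon_g$ --- and extracting the coefficient of $t^{2g-2}$ gives, for each $g$, an equation
\[
  \mathcal{E}\Mon_g = \mathcal{S}\Mon_g + \mathcal{T}\Mon_{g-1} + \sum_{g'=0}^{g}\mathcal{Q}(\Mon_{g'}, \Mon_{g-g'})
\]
(plus a seed term $p_1$ on the right when $g = 0$, with $\Mon_{-1} := 0$), where $\mathcal{E} = \sum_k kp_k\diff{p_k}$ is the Euler operator, $\mathcal{S}F = \sum_{i,j}(i+j)p_ip_j\diff{p_{i+j}}F$ preserves genus, $\mathcal{T}F = \sum_{i,j}ij\,p_{i+j}\sdiff{p_i}{p_j}F$ lowers genus by one, and $\mathcal{Q}(F,G) = \sum_{i,j}ij\,p_{i+j}\diff{p_i}F\diff{p_j}G$ is additive in genus.

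Next I would apply $\Delta_1$ to these equations. This rests on a small dictionary of operator identities, which I would establish by direct computation from the definitions of $\Delta_1$, $\Pi_2$, $\Split_{1 \to 2}$: conjugation turns $\mathcal{E}$ into the full Euler operator $\mathcal{E} + x_1\diff{x_1}$, while $\Delta_1\mathcal{S}F$, $\Delta_1\mathcal{T}F$, $\Delta_1\mathcal{Q}(F,G)$ each equal the ``naive'' answer ($\mathcal{S}\Delta_1 F$, and so on) plus explicit correction terms built out of $\Pi_2\Split_{1 \to 2}\Delta_1 F$, of $\Delta_1^2 F$, and of the product $\Delta_1 F \cdot \Delta_1 G$. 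Feeding these in and collecting terms --- a step that will require care, using the un-lifted equation itself to simplify the residual $\mathcal{S}$-, $\mathcal{T}$-, and $\mathcal{Q}$-contributions --- should collapse the genus-$g$ identity to exactly the form in the statement: the fixed-point equation of part~\partref{item:pdeunique-genus-zero} at genus zero, with the seed $p_1$ becoming $\Delta_1(p_1) = x_1$, and the linear recursion of part~\partref{item:pdeunique-higher-genera} for $g \geq 1$, in which $\Delta_1^2\Mon_{g-1}$ descends from the $\mathcal{T}$-term and the quadratic sum from the $\mathcal{Q}$-terms. I expect this collapse to be the main obstacle; it is the delicate bookkeeping that underlies the genus-zero analysis in~\cite{goulden-guay-paquet-novak12a}, and it is cleanest to understand combinatorially, as a case analysis of how the last transposition of a monotone factorization acts on the distinguished cycle --- splitting it (the $\Pi_2\Split_{1 \to 2}$ branch), merging it with another cycle, or acting elsewhere.

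For the uniqueness assertions I would grade $\QQ[[\pp, x_1]]$ by assigning to the monomial $x_1^a p_\beta$ the pair $(a + |\beta|,\ \ell(\beta))$, well-ordered lexicographically, and observe that the right-hand operators are triangular for this order: the bidegree-$(d, \ell)$ part of $\Pi_2\Split_{1 \to 2}\Delta_1\Mon_g$ depends only on the bidegree-$(d, \ell - 1)$ part of $\Delta_1\Mon_g$, while multiplication by $\Delta_1\Mon_0 = x_1 + \cdots$ and squaring strictly raise the first coordinate. Hence in part~\partref{item:pdeunique-genus-zero} the coefficients of $\Delta_1\Mon_0$ are pinned down one at a time by induction on $(d, \ell)$ starting from the seed $x_1$, and for $g \geq 1$ the operator $1 - 2\Delta_1\Mon_0 - \Pi_2\Split_{1 \to 2}$ is invertible on the relevant completion (a Neumann series), so $\Delta_1\Mon_g$ is uniquely recovered from the inhomogeneous term, which involves only $\Mon_0, \dots, \Mon_{g-1}$.

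Part~(iii) is then immediate. Since $\Delta_1 = \sum_{k \geq 1} k x_1^k \diff{p_k}$, we have $\diff{p_k}\Mon_g = \tfrac1k [x_1^k]\,\Delta_1\Mon_g$ for every $k \geq 1$; this pins down all first partial derivatives of $\Mon_g$ and hence determines $\Mon_g$ up to its $\pp$-constant term, which the normalization $[p_0]\Mon_g = 0$ fixes.
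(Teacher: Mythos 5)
This theorem is imported from~\cite{goulden-guay-paquet-novak12a}: the present paper gives no proof beyond the remark that it is ``essentially a reworking of the monotone join-cut equation'' of \autoref{thm:joincut}, so your proposal must be measured against the cited derivation. Your genus separation (the substitution sending $\Mon$ to $\sum_g t^{2g-2}\Mon_g$ and the sorting of $\mathcal{S}$, $\mathcal{T}$, $\mathcal{Q}$ by how they shift the $t$-grading) is correct, your uniqueness argument via lexicographic induction on $(a+\abs{\beta},\ell(\beta))$ is correct and is the standard one, and part~(iii) is exactly right: $[x_1^k]\Delta_1\Mon_g = k\,\partial\Mon_g/\partial p_k$ recovers all first partials, and $[p_0]\Mon_g=0$ fixes the constant.

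The gap is in the central step, the derivation of the lifted equations themselves. Applying $\Delta_1$ to the genus-$g$ scalar equation does not collapse to the identities of parts~\partref{item:pdeunique-genus-zero} and~\partref{item:pdeunique-higher-genera}. Computing the commutators you describe, one finds $\Delta_1\DD = (\DD + x_1\diff{x_1})\Delta_1$, and each of the corrections to $\Delta_1\mathcal{S}$, $\Delta_1\mathcal{T}$, $\Delta_1\mathcal{Q}$ is $x_1\diff{x_1}$ applied to (a multiple of) the corresponding term of the target identity --- for instance the $\mathcal{S}$-correction is $x_1\diff{x_1}\Pi_2\Split_{1\to 2}\Delta_1\Mon_g$, not $\Pi_2\Split_{1\to 2}\Delta_1\Mon_g$ --- while the ``naive'' images $\mathcal{S}\Delta_1\Mon_g$, $\mathcal{T}\Delta_1\Mon_{g-1}$, etc.\ survive and cannot be removed using the un-lifted equation, which only controls $\Pi_1\Delta_1\Mon_g = \DD\Mon_g$. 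The obstruction is informational, not computational: the scalar join-cut equation is the $\Pi_1$-projection of the lifted one (one identity per partition, obtained by summing over the choice of marked part), whereas the lifted equation resolves the recursion by the size of a marked cycle (one identity per pair of a partition and a marked-part size). No operator manipulation recovers the finer statement from the coarser one. The actual proof is the join-cut case analysis carried out \emph{relative to the marked cycle} --- the distinguished transposition, forced by monotonicity to be the one containing the largest $b_i$, either cuts that cycle (the $\Pi_2\Split_{1\to 2}$ term), or joins it to a cycle carried by a factor of lower genus or of the complementary transitive piece (the $\Delta_1^2\Mon_{g-1}$ and product terms). You name this analysis in an aside as the ``cleanest'' way to understand the collapse, but it is not an optional clarification: it is the entire content of the existence half of parts~\partref{item:pdeunique-genus-zero} and~\partref{item:pdeunique-higher-genera}, and the proposal does not carry it out.
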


\subsection{A change of variables}\label{sec:changevars}

In~\cite{goulden-guay-paquet-novak12a}, where we determined $\Delta_1\Mon_0$ from \thmpartref{thm:pdeunique}{item:pdeunique-genus-zero}, we found it convenient to change variables from $\pp = (p_1, p_2, \ldots)$ and $\xx = (x_1, x_2, \ldots)$ to $\qq = (q_1, q_2, \ldots)$ and $\yy = (y_1, y_2, \ldots)$ via the relations
\begin{equation}\label{eq:def_change}
  q_j = p_j (1 - \gamma)^{-2j}, \qquad y_j = x_j (1 - \gamma)^{-2}, \qquad j \geq 1,
\end{equation}
and to define the formal power series $\gamma, \eta, \eta_j$, $j\geq 1$ by
  \begin{equation*}
    \gamma = \sum_{k \geq 1} \binom{2k}{k} q_k, \quad
    \eta   = \sum_{k \geq 1} (2k + 1) \binom{2k}{k} q_k, \quad
    \eta_j = \sum_{k \geq 1} (2k + 1) k^j \binom{2k}{k} q_k.
  \end{equation*}
Expressing the operators $\Delta_i$, $\Pi_i$, $\Split_{i \to j}$ in terms of $\qq$ and $\yy$, we obtained
\begin{align*}
&\Delta_i = \sum_{k \geq 1} \left( k y_i^k \diff{q_k} \right) + \frac{4y_i}{(1 - 4y_i)^{\frac{3}{2}} (1 - \eta)} \sum_{k \geq 1} \left( k q_k \diff{q_k} + y_k \diff{y_k} \right),\\
&\Pi_i  = [y_i^0]  + \sum_{k \geq 1} q_k [y_i^k], \\
&\Split_{i \to j} F(y_i) =\frac{y_j F(y_i) - y_i F(y_j)}{y_i - y_j} + F(0).
\end{align*}
We were also able to show that
\begin{equation}\label{eq:dieqdiep}
\EE=\frac{1-\eta}{1-\gamma}\DD ,
\end{equation}
where $\DD =\sum_{k\geq 1}k p_k \diff{p_k}$, and $\EE =\sum_{k\geq 1}k q_k \diff{q_k}$, and, for each $k\geq 1$, that
\[
q_k\diff{q_k}=p_k\diff{p_k}-\frac{2q_k}{1-\gamma}\binom{2k}{k}\DD.
\]
Summing this over $k\geq 1$ gives
\begin{equation}\label{eq:hatdieqdiep}
\wE=\wD-\frac{2\gamma}{1-\gamma}\DD,
\end{equation}
where $\wD =\sum_{k\geq 1} p_k \diff{p_k}$, and $\wE =\sum_{k\geq 1} q_k \diff{q_k}$.

In terms of these transformed variables, we were able to solve the monotone join-cut equation for genus $0$ given in \thmpartref{thm:pdeunique}{item:pdeunique-genus-zero}, to obtain \cite[Corollary 4.3]{goulden-guay-paquet-novak12a}
\begin{equation}\label{eq:delta_zero}
  \Delta_1 \Mon_0 = \Pi_2 A,\hspace{1.8em} A=  1 - (1 - 4y_1)^{\frac{1}{2}} - \frac{y_1 (1 - 4y_1)^{\frac{1}{2}}}{2(y_1 - y_2)} \left( (1 - 4y_1)^{-\frac{1}{2}} - (1 - 4y_2)^{-\frac{1}{2}}  \right).
\end{equation}

In this paper we will be solving the monotone join-cut equation for genus $g$ given in \thmpartref{thm:pdeunique}{item:pdeunique-higher-genera}. The following result will allow us to reexpress the left-hand side of this equation in a more tractable form.
  \begin{proposition}\label{thm:lhs_T}
    For $g \geq 1$, we have
    \[  \left( 1 - 2 \Delta_1 \Mon_0 - \Pi_2 \Split_{1 \to 2} \right) \Delta_1 \Mon_g 
        = (1 - \T) \left( (1 - \eta) (1 - 4y_1)^{\frac{1}{2}} \Delta_1 \Mon_g \right), \]
    where $\T$ is the $\QQ[[\qq]]$-linear operator defined by
    \[
      \T(F) = (1 - \eta)^{-1} \Pi_2 (1 - 4y_2)^{-\frac{3}{2}} \Split_{1 \to 2} \left( (1 - 4y_1) F \right).
    \]
  \end{proposition}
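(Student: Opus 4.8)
The plan is to verify the claimed identity by expanding both sides in terms of the operators $\Pi_2$, $\Split_{1\to 2}$, and multiplication by powers of $(1-4y_1)$ and $(1-4y_2)$, and matching them. The guiding idea is that the factor $(1-\eta)(1-4y_1)^{1/2}$ appearing on the right is precisely the "conjugating factor" that turns the awkward operator $1 - 2\Delta_1\Mon_0 - \Pi_2\Split_{1\to 2}$ on the left into the cleaner operator $1-\T$ applied to the conjugated series. So the first step is to substitute the explicit genus-zero solution~\eqref{eq:delta_zero} for $\Delta_1\Mon_0$ in terms of the $\yy$-variables, so that $2\Delta_1\Mon_0$ becomes an explicit, rational-in-$\yy$ expression (involving square roots $(1-4y_i)^{\pm 1/2}$ and the factor $(1-\eta)$) rather than an abstract series.

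Next I would compute the right-hand side directly. Writing $G = (1-\eta)(1-4y_1)^{1/2}\,\Delta_1\Mon_g$ as shorthand, the right side is $G - \T(G)$. The key observation is that $\Split_{1\to 2}$ is a $\QQ[[\qq]]$-linear operator in the $\yy$-variables, so $\Split_{1\to 2}\big((1-4y_1)F\big)$ can be re-expanded using the explicit formula for $\Split_{1\to 2}$; and when we apply $(1-\eta)^{-1}\Pi_2(1-4y_2)^{-3/2}$ to the result and substitute $F = (1-\eta)(1-4y_1)^{1/2}\Delta_1\Mon_g$, the powers of $(1-4y_i)$ and the $(1-\eta)$ factors should combine so that $\T(G)$ is exactly $(1-\eta)(1-4y_1)^{1/2}$ times $\Pi_2\Split_{1\to 2}\Delta_1\Mon_g$ plus a correction term. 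The correction term, coming from the $F(0)$ piece inside $\Split$ and from the mismatch of $(1-4y_i)$ powers, is where the $2\Delta_1\Mon_0$ term on the left-hand side must appear: one should recognize $2\Pi_2 A$ (with $A$ as in~\eqref{eq:delta_zero}) emerging from collecting these leftover pieces. The bookkeeping of the constant $F(0)$ contribution in $\Split_{1\to 2}((1-4y_1)F)$ — namely $(1-4y_1)F\big|_{y_1=0} = F(0)$ — will need care, since it interacts with both $\Pi_2$ and the $\Delta_1\Mon_0$ term.

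So the concrete steps are: (1) rewrite $\Delta_1\Mon_0 = \Pi_2 A$ explicitly in $\yy$; (2) expand $\Split_{1\to 2}\big((1-4y_1)\Delta_1\Mon_g\big)$ using the definition of $\Split$, separating the "divided difference" part from the $F(0)$ part; (3) apply $\Pi_2$ and the prefactor $(1-\eta)^{-1}(1-4y_2)^{-3/2}$ and simplify the $(1-4y_i)$ powers against the $(1-4y_1)^{1/2}$ hiding in $G$; (4) collect terms and identify the leftover pieces with $2\Delta_1\Mon_0\cdot\Delta_1\Mon_g = 2(\Pi_2 A)\Delta_1\Mon_g$ and with $\Pi_2\Split_{1\to 2}\Delta_1\Mon_g$, thereby matching the left-hand side; (5) conclude using $\QQ[[\qq]]$-linearity of $\T$ (which follows since $\Pi_2$, $\Split_{1\to 2}$, and multiplication by the stated rational functions of $\yy$ and $\eta$ are all $\QQ[[\qq]]$-linear) that the identity holds as formal power series.

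The main obstacle I expect is step~(3)–(4): the careful tracking of the half-integer powers $(1-4y_1)^{\pm 1/2}$, $(1-4y_2)^{\pm 1/2}$, $(1-4y_2)^{-3/2}$ through the divided-difference structure of $\Split_{1\to 2}$, together with the $F(0)$ boundary term, so that everything recombines exactly into the explicit form of $A$ from~\eqref{eq:delta_zero}. This is essentially an algebraic identity among rational functions in $y_1, y_2$ and their square roots, and the risk is purely in getting the factor $(1-4y_1)$ versus $(1-4y_1)^{1/2}$ placements and the divided-difference cross-terms to cancel correctly; once one trusts that the genus-zero formula~\eqref{eq:delta_zero} was designed to make this work, the computation should close.
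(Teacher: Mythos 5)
Your proposal is correct and follows essentially the same route as the paper's proof: substitute $\Delta_1\Mon_0=\Pi_2 A$ from~\eqref{eq:delta_zero}, expand $\Split_{1\to2}$ into its divided-difference and $F(0)$ parts (the latter vanishing since $\Delta_1\Mon_g$ has no constant term in $y_1$), and reduce everything to one algebraic identity among rational functions of $y_1,y_2$ with half-integer powers of $(1-4y_i)$ --- precisely the identity the paper records as ``routine to check,'' namely $(y_1-y_2)(1-2A)-y_2=(y_1-y_2)\bigl(2-(1-4y_2)^{-\frac32}\bigr)(1-4y_1)^{\frac12}-y_2(1-4y_2)^{-\frac32}(1-4y_1)^{\frac32}$, after which $\Pi_2$ converts $2-(1-4y_2)^{-\frac32}$ into $1-\eta$. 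The only difference is direction (you go from the right-hand side to the left, the paper goes left to right), which changes nothing of substance.
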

  
  \begin{proof}
From~\eqref{eq:delta_zero} and the expression for $\Delta_1$ given above (and using the fact that $\Delta_1 \Mon_g$ has no constant term as a power series in $y_1$), we have
\begin{equation*}
\begin{aligned}
\text{LHS} &=\Pi_2 \left(\left(1-2A\right)\Delta_1\Mon_g-\frac{y_2\Delta_1\Mon_g-y_1\Delta_2\Mon_g}{y_1-y_2}\right)\\
           &= \Pi_2\left(  \frac{\left(  (y_1-y_2)(1-2A)-y_2\right)\Delta_1\Mon_g+y_1\Delta_2\Mon_g}{y_1-y_2} \right) .
\end{aligned}
\end{equation*}
But it is routine to check that
\begin{multline*}
 (y_1-y_2)(1-2A)-y_2 \\
= (y_1-y_2)\left(2-(1-4y_2)^{-\frac{3}{2}}\right)(1-4y_1)^{\frac{1}{2}}-y_2(1-4y_2)^{-\frac{3}{2}}(1-4y_1)^{\frac{3}{2}},
\end{multline*}
so we have
\begin{equation*}
\begin{aligned}
\text{LHS} &= \Pi_2\Bigg( \left(2-(1-4y_2)^{-\frac{3}{2}}\right)(1-4y_1)^{\frac{1}{2}}\Delta_1\Mon_g \\
&\qquad\qquad\qquad\qquad\qquad\qquad -\frac{y_2(1-4y_2)^{-\frac{3}{2}}(1-4y_1)^{\frac{3}{2}}\Delta_1\Mon_g-y_1\Delta_2\Mon_g}{y_1-y_2}\Bigg) \\
           &= (1-\eta)(1-4y_1)^{\frac{1}{2}}\Delta_1\Mon_g - \Pi_2(1 - 4y_2)^{-\frac{3}{2}} \Split_{1 \to 2} \left( (1 - 4y_1)^{\frac{3}{2}} \Delta_1 \Mon_g \right) ,
\end{aligned}
\end{equation*}
giving the result.
\end{proof}

\subsection{Auxiliary power series}\label{sec:auxseries}

We also find it convenient to define auxiliary power series related to the power series $\gamma$, $\eta$ and $\eta_j$, $j \geq 1$ in $\QQ[[\qq]]$ which appear in the statement of \autoref{thm:main}. These are the power series $\gamma(y_i)$, $\eta(y_i)$, and $\eta_j(y_i)$, $j \geq 1$ in $\QQ[[\yy]]$, defined by
\begin{align*}
  \gamma(y_i) &= (1 - 4y_i)^{-\frac{1}{2}} - 1 = \sum_{k \geq 1} \binom{2k}{k} y_i^k, \\
  \eta(y_i)   &= (1 - 4y_i)^{-\frac{3}{2}} - 1 = \sum_{k \geq 1} (2k + 1) \binom{2k}{k} y_i^k, \\
  \eta_j(y_i) &= \left( y_i \diff{y_i} \right)^j (1 - 4y_i)^{-\frac{3}{2}} = \sum_{k \geq 1} (2k + 1) k^j \binom{2k}{k} y_i^k, \qquad j \geq 1,
\end{align*}
so that
\begin{equation}\label{eq:PiAux}
  \Pi_i \gamma(y_i) = \gamma, \qquad
  \Pi_i \eta(y_i) = \eta, \qquad
  \Pi_i \eta_j(y_i) = \eta_j, \qquad j \geq 1.
\end{equation}

\subsection{Computational lemmas}

The following computational lemmas are used extensively in the rest of the paper to apply the lifting operator $\Delta_1$ to expressions involving the indeterminates $\yy$ and the series $\gamma, \eta, \eta_1, \eta_2, \ldots$.

\begin{lemma}\label{thm:delta_pi}
  For $F \in \QQ[[\qq, \yy]]$, we have the identity
  \[
    \Delta_1 \Pi_2 F = \Pi_2 \Delta_1 F + y_1 \left. \diff[F]{y_2} \right|_{y_2 = y_1}.
  \]
\end{lemma}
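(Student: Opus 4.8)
The plan is to prove the identity by direct computation, expanding both sides in terms of the definitions of $\Delta_1$ and $\Pi_2$ and tracking how the operators interact when applied to a monomial $q^{\mathbf{a}} y_1^{b_1} y_2^{b_2}$ (the general case following by $\QQ$-linearity and continuity in the $\qq$-adic and $\yy$-adic topologies). Recall that in the transformed variables, $\Delta_1$ has two pieces: a ``naive'' differentiation part $\sum_k k y_1^k \partial/\partial q_k$ and a correction term proportional to $4y_1 (1-4y_1)^{-3/2}(1-\eta)^{-1}(\EE + \sum_k y_k\partial/\partial y_k)$. The operator $\Pi_2 = [y_2^0] + \sum_{k\geq 1} q_k [y_2^k]$ acts only on the $y_2$-dependence, replacing powers of $y_2$ by the corresponding $q_k$ (and $y_2^0$ by $1$).

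First I would check the identity when $F$ does not depend on $y_2$ at all: then $\Pi_2 F = F$, the right-hand correction term $y_1 (\partial F/\partial y_2)|_{y_2=y_1}$ vanishes, and the claim reduces to $\Delta_1 F = \Delta_1 F$, which is trivial. Next, the substantive case: $F = G(\qq,y_1)\, y_2^m$ with $m\geq 1$, so $\Pi_2 F = q_m G$. Applying $\Delta_1$ to $q_m G$, the Leibniz rule produces a term where $\Delta_1$ hits $q_m$ — and here is the key point: the naive part of $\Delta_1$ contributes $m y_1^m$ when differentiating $q_m$, which is exactly $\Pi_2$ applied to $(\text{naive part of }\Delta_1)$ acting on the ``$q_m$'' slot, \emph{plus} the leftover term $y_1^m \cdot (\text{stuff})$ that must be matched against $y_1 (\partial F/\partial y_2)|_{y_2=y_1} = y_1 \cdot m y_1^{m-1} G = m y_1^m G$. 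Meanwhile the correction part of $\Delta_1$ contains $\sum_k k q_k \partial/\partial q_k$ and $\sum_k y_k\partial/\partial y_k$, both of which commute past $\Pi_2$ up to the extra $q_m$-weight; one checks that the correction term's action on $q_m G$ versus on $G y_2^m$ differs only by the naive-part discrepancy already accounted for. Assembling these contributions, the excess of $\Delta_1 \Pi_2 F$ over $\Pi_2 \Delta_1 F$ is precisely $y_1 (\partial F/\partial y_2)|_{y_2=y_1}$.

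The main obstacle I anticipate is bookkeeping rather than conceptual difficulty: one must be careful that the correction term in $\Delta_1$ involves $\EE = \sum_k k q_k \partial/\partial q_k$ and $\sum_k y_k\partial/\partial y_k$ acting on \emph{all} variables, so when $\Pi_2$ converts $y_2^m$ into $q_m$, the operator $\sum_k k q_k\partial/\partial q_k$ now ``sees'' that $q_m$ and differentiates it, whereas before $\Pi_2$ the operator $\sum_k y_k\partial/\partial y_k$ saw $y_2^m$ and produced $m$. Since both give a factor of $m$, these match, and the only genuine mismatch comes from the $\sum_k k y_1^k \partial/\partial q_k$ piece, which lands on $q_m$ in one order but not in the other. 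I would organize the proof so that this single mismatch is isolated cleanly and identified with the claimed correction term, keeping the verification short. A more structural alternative — writing $\Pi_2$ as a substitution homomorphism and using a commutator identity $[\Delta_1, \Pi_2] = y_1 (\partial/\partial y_2)|_{y_2 = y_1}$ directly on generators — would also work and might be the cleanest exposition, but the monomial computation is the most transparent route and is what I would write up.
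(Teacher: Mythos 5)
Your proposal is correct and follows essentially the same route as the paper: the paper computes the commutator $\Delta_1\Pi_2-\Pi_2\Delta_1$ directly from the explicit forms of the two operators, observes that the correction part of $\Delta_1$ contributes $kq_k[y_2^k]-q_k[y_2^k]y_2\diff{y_2}=0$, and is left with $\sum_{k\geq 1}ky_1^k[y_2^k]$, which is exactly the mismatch from the naive part that you isolate on monomials. Your monomial bookkeeping is just this commutator computation unpacked, so no substantive difference.
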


\begin{proof}
We compute directly the commutator
\begin{equation*}
\begin{aligned}
 \Delta_1 \Pi_2  -  \Pi_2 \Delta_1 &= \sum_{k\geq 1} \left(  ky_1^k[y_2^k]+  \frac{4y_1}{(1 - 4y_1)^{\frac{3}{2}} (1 - \eta)} \left(kq_k[y_2^k]-q_k [y_2^k]y_2\diff{y_2} \right)   \right) \\
 &=\sum_{k\geq 1} ky_1^k[y_2^k],
\end{aligned}
\end{equation*}
and the result follows immediately.
\end{proof}

\begin{lemma}\label{thm:delta_values}
  We have
  \[\begin{aligned}
    &\Delta_1 y_j = 4 y_1 y_j (1 - 4y_1)^{-\frac{3}{2}} (1 - \eta)^{-1}, \qquad j \geq 1, \\
    &\Delta_1 \eta = \eta_1(y_1) + 4 y_1 (1 - 4y_1)^{-\frac{3}{2}} \eta_1 (1 - \eta)^{-1}, \\
    &\Delta_1 \eta_j = \eta_{j + 1}(y_1) + 4 y_1 (1 - 4y_1)^{-\frac{3}{2}} \eta_{j + 1} (1 - \eta)^{-1}, \qquad j \geq 1. \\
  \end{aligned}\]
\end{lemma}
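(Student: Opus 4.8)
The plan is to compute $\Delta_1$ directly on each of the three families of power series, using the explicit formula for $\Delta_1$ in the $\qq,\yy$ variables:
\[
\Delta_1 = \sum_{k \geq 1} k y_1^k \diff{q_k} + \frac{4y_1}{(1 - 4y_1)^{\frac{3}{2}} (1 - \eta)} \sum_{k \geq 1} \left( k q_k \diff{q_k} + y_k \diff{y_k} \right).
\]
The key observation is that $y_j$, $\eta$, and $\eta_j$ are all power series in $\qq$ and $\yy$ on which the two pieces of $\Delta_1$ act in a controlled way: the first piece $\sum_k k y_1^k \partial/\partial q_k$ behaves like a ``lifting'' that replaces a $q_k$ by $k y_1^k$, while the second piece is the operator $\frac{4y_1}{(1-4y_1)^{3/2}(1-\eta)}$ times the combined Euler-type operator $\EE + \wE_{\yy}$, where I write $\EE = \sum_k k q_k \partial/\partial q_k$ and $\wE_{\yy} = \sum_k y_k \partial/\partial y_k$.

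First I would handle $\Delta_1 y_j$: since $y_j$ does not involve any $q_k$, the first piece kills it, and the second piece contributes $\frac{4y_1}{(1-4y_1)^{3/2}(1-\eta)} \cdot y_j$ because $\wE_{\yy} y_j = y_j$ and $\EE y_j = 0$. This gives the first identity immediately. Next, for $\Delta_1 \eta$: recall $\eta = \sum_k (2k+1)\binom{2k}{k} q_k$, so the first piece of $\Delta_1$ sends $\eta$ to $\sum_k (2k+1)\binom{2k}{k} k y_1^k = \eta_1(y_1)$, using the definition of the auxiliary series $\eta_1(y_1)$ from \autoref{sec:auxseries}. For the second piece, $\eta$ involves no $y_k$, so $\wE_{\yy}\eta = 0$, while $\EE \eta = \sum_k k (2k+1)\binom{2k}{k} q_k = \eta_1$; multiplying by the prefactor gives the second term $4y_1(1-4y_1)^{-3/2}\eta_1(1-\eta)^{-1}$, yielding the second identity. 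The third identity for $\Delta_1 \eta_j$ is entirely analogous: the first piece sends $\eta_j = \sum_k (2k+1)k^j\binom{2k}{k} q_k$ to $\sum_k (2k+1)k^j\binom{2k}{k} k y_1^k = \eta_{j+1}(y_1)$, and the Euler operator $\EE$ picks up an extra factor of $k$ to produce $\eta_{j+1}$ in the second term.

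There is no serious obstacle here — the lemma is a routine application of the formula for $\Delta_1$. The only point requiring a little care is to verify that the second (Euler-type) piece of $\Delta_1$ really does act on $\eta$ and $\eta_j$ as multiplication by the scalar prefactor times $\eta_1$ (resp.\ $\eta_{j+1}$), i.e.\ that applying $\sum_k(k q_k \partial/\partial q_k + y_k \partial/\partial y_k)$ to $\eta$ or $\eta_j$ shifts the index $j$ up by one exactly as the auxiliary-series bookkeeping predicts; this is immediate from the series definitions since each $q_k$ carries weight $k$ under $\EE$. One should also note that the prefactor $\frac{4y_1}{(1-4y_1)^{3/2}(1-\eta)}$ is treated as a scalar (it commutes out of the sum), which is exactly how it appears in the stated formula for $\Delta_1$.
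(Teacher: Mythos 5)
Your computation is correct, and for the first identity it is exactly the paper's argument: $\Delta_1 y_j$ is read off directly from the transformed expression for $\Delta_1$, the $q$-derivative part killing $y_j$ and the Euler part in the $y$-variables contributing $y_j$ times the prefactor. For the second and third identities the paper takes a mildly different route: instead of differentiating the series $\eta=\sum_{k\geq 1}(2k+1)\binom{2k}{k}q_k$ and $\eta_j$ term by term as you do, it writes $\eta=\Pi_2\,\eta(y_2)$ and $\eta_j=\Pi_2\,\eta_j(y_2)$ as in~\eqref{eq:PiAux} and applies the commutator identity of \autoref{thm:delta_pi}, so that $\Delta_1\eta=\Pi_2\Delta_1\eta(y_2)+y_1\left.\diff[\eta(y_2)]{y_2}\right|_{y_2=y_1}$; the first term is then evaluated using the already-established formula for $\Delta_1 y_2$ (giving the $4y_1(1-4y_1)^{-3/2}\eta_1(1-\eta)^{-1}$ piece after applying $\Pi_2$), and the boundary term produces $\eta_1(y_1)$. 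Both arguments are short and routine and yield the same identities; yours is marginally more self-contained since it does not invoke \autoref{thm:delta_pi}, while the paper's version reuses the $\Pi_2$ representation and the commutator lemma, which are needed elsewhere in any case. There is no gap in your proposal.
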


\begin{proof}
  The first equation follows directly from the expression for $\Delta_1$ given in \autoref{sec:changevars}. The other two equations can be obtained by applying \autoref{thm:delta_pi} to the expressions in~\eqref{eq:PiAux} for $\eta$ and $\eta_j$.
  \end{proof}

\begin{proposition}\label{prop:DelSqMon}
\[
\Delta_1^2 \Mon_0 = y_1^2 (1 - 4y_1)^{-2}
\]
\end{proposition}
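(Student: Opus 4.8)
The plan is to apply the lifting operator $\Delta_1$ to the genus zero identity $\Delta_1\Mon_0 = \Pi_2 A$ from~\eqref{eq:delta_zero} and invoke~\autoref{thm:delta_pi} with $F = A$, which gives
$\Delta_1^2\Mon_0 = \Pi_2(\Delta_1 A) + y_1\left.\diff[A]{y_2}\right|_{y_2=y_1}$.
The whole computation becomes transparent under the substitution $u = (1-4y_1)^{\frac12}$, $v = (1-4y_2)^{\frac12}$: then $y_1 = \frac{1-u^2}{4}$, $y_2 = \frac{1-v^2}{4}$, and the parenthesized factor of $A$ collapses to $B := \frac{1-u^2}{2v(u+v)}$, so that $A = 1 - u - B$. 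The boundary term is a routine differentiation: treating $u$ as constant, $\left.\diff[B]{y_2}\right|_{v=u}$ works out, after combining the two fractions that appear, to $\frac{3(1-u^2)}{4u^4} = 3y_1(1-4y_1)^{-2}$, so $y_1\left.\diff[A]{y_2}\right|_{y_2=y_1} = -3y_1^2(1-4y_1)^{-2}$.

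For $\Pi_2(\Delta_1 A)$, I would use that $A$ contains none of the variables $q_k$, so in the $\qq,\yy$ coordinates of~\autoref{sec:changevars} only the Eulerian part of $\Delta_1$ acts on it, giving $\Delta_1 A = \frac{4y_1}{(1-4y_1)^{3/2}(1-\eta)}(y_1\partial_{y_1} + y_2\partial_{y_2})A$. Since the prefactor is free of $y_2$ it commutes with $\Pi_2$; moreover $\Pi_2$ commutes with $y_1\partial_{y_1}$, and for any $q$-free $F$ one checks, by expanding $F$ in powers of $y_2$ and comparing coefficients, that $\Pi_2(y_2\partial_{y_2}F) = \EE\,\Pi_2 F$. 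Hence $\Pi_2(\Delta_1 A) = \frac{4y_1}{(1-4y_1)^{3/2}(1-\eta)}(y_1\partial_{y_1} + \EE)(\Delta_1\Mon_0)$. Since $\Pi_2$ fixes the $y_2$-free summand $1-u$, we have $\Delta_1\Mon_0 = \Pi_2 A = (1-u) - \Pi_2 B$, with $(y_1\partial_{y_1}+\EE)(1-u) = \frac{2y_1}{u}$ and $(y_1\partial_{y_1}+\EE)\,\Pi_2 B = \Pi_2\bigl((y_1\partial_{y_1}+y_2\partial_{y_2})B\bigr)$.

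The crux, and the one step that genuinely requires care, is the identity $(y_1\partial_{y_1}+y_2\partial_{y_2})B = \frac{y_1}{u}(1-4y_2)^{-\frac32}$. I expect this to come out of the $u,v$ expression for $B$ by differentiating and then factoring the numerator of the resulting rational expression, which collapses in a way that is not evident in advance; I would confirm it by a short direct calculation together with a check of the first few coefficients in $y_1,y_2$. Granting it, and using $(1-4y_2)^{-\frac32} = 1 + \eta(y_2)$ together with~\eqref{eq:PiAux}, we get $\Pi_2\bigl((y_1\partial_{y_1}+y_2\partial_{y_2})B\bigr) = \frac{y_1}{u}(1+\eta)$, hence $(y_1\partial_{y_1}+\EE)(\Delta_1\Mon_0) = \frac{2y_1}{u} - \frac{y_1}{u}(1+\eta) = \frac{y_1(1-\eta)}{u}$. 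Substituting back, $\Pi_2(\Delta_1 A) = \frac{4y_1}{(1-4y_1)^{3/2}(1-\eta)}\cdot\frac{y_1(1-\eta)}{u} = 4y_1^2(1-4y_1)^{-2}$, and adding the boundary term yields $\Delta_1^2\Mon_0 = (4-3)\,y_1^2(1-4y_1)^{-2} = y_1^2(1-4y_1)^{-2}$. Apart from that one collapse, everything is bookkeeping with the operators already set up in this section.
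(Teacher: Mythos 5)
Your proposal is correct and follows essentially the same route as the paper: the same decomposition via \autoref{thm:delta_pi} into $\Pi_2\Delta_1 A$ plus the boundary term, with the two pieces evaluating to $4y_1^2(1-4y_1)^{-2}$ and $-3y_1^2(1-4y_1)^{-2}$ exactly as in the paper's proof. Your ``collapse'' identity $(y_1\partial_{y_1}+y_2\partial_{y_2})B=\tfrac{y_1}{u}(1-4y_2)^{-3/2}$ (which does check out) is the paper's closed form for $\Delta_1 A$ in disguise, since only the Euler part of $\Delta_1$ acts on the $q$-free series $A$; the detour through $\EE$ is immediately undone and could be omitted.
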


\begin{proof}
Using~\eqref{eq:delta_zero}  and \autoref{thm:delta_pi}, we obtain
\begin{equation}\label{eq:DsqMon0}
\Delta_1^2 \Mon_0 =\Pi_2\Delta_1 A +y_1 \left. \diff[A]{y_2} \right|_{y_2 = y_1}.
\end{equation}

We will consider the two terms in \eqref{eq:DsqMon0} separately. For the first term, from $\Delta_1 \frac{y_1}{y_1-y_2}=0$ (since $\frac{y_1}{y_1-y_2} = \frac{x_1}{x_1-x_2}$ and $\Delta_1(x_1) = \Delta_1(x_2) = 0$) and of course $\Delta_1(1)=0$), we have
\begin{equation*}
\begin{aligned}
\Delta_1 A =&\left( -1+\frac{y_1}{2(y_1-y_2)}(1-4y_2)^{-\frac{1}{2}}\right)\Delta_1(1-4y_1)^{\frac{1}{2}}\\
                                 &+\frac{y_1}{2(y_1-y_2)}(1-4y_1)^{\frac{1}{2}}\Delta_1(1-4y_2)^{-\frac{1}{2}}.
\end{aligned}
\end{equation*}
Applying \autoref{thm:delta_values}, it is now routine to show that
\[
\Delta_1 A =4y_1^2(1-4y_1)^{-2}(1-\eta)^{-1}\left(2-(1-4y_2)^{-\frac{3}{2}}\right),
\]
and so we obtain
\[
\Pi_2 \Delta_1 A =4y_1^2(1-4y_1)^{-2}.
\]

For the second term, we have
\begin{equation*}
A=1 - (1-4y_1)^{\frac{1}{2}} - \frac{1}{2} y_1(1-4y_1)^{\frac{1}{2}}\sum_{k\geq 1} \binom{2k}{k}\sum_{i=0}^{k-1} y_1^{k-1-i} y_2^i,
\end{equation*}
which gives
\begin{equation*}
\begin{aligned}
y_1 \left. \diff[A]{y_2} \right|_{y_2 = y_1} &= -\frac{1}{2} y_1^2 (1-4y_1)^{\frac{1}{2}}\sum_{k\geq 2} \binom{2k}{k}\binom{k}{2} y_1^{k-2}\\
& = -\frac{1}{4} y_1^2 (1-4y_1)^{\frac{1}{2}} \frac{\partial^2}{\partial y_1^2} (1-4y_1)^{-\frac{1}{2}} = -3 y_1^2 (1-4y_1)^{-2} .
\end{aligned}
\end{equation*}

The result follows immediately from~\eqref{eq:DsqMon0} by combining these two terms.
\end{proof}
 
\section{A ring of polynomials and solving the join-cut equation}\label{sec:proofs}

In this section we consider the monotone join-cut equation for $\Delta_1\Mon_g$ given in \thmpartref{thm:pdeunique}{item:pdeunique-higher-genera}, with the differential operator on the left-hand side reexpressed in the form given in \autoref{thm:lhs_T}, to give
\begin{equation}\label{eq:revjcut}
(1 - \T) \left( (1 - \eta) (1 - 4y_1)^{\frac{1}{2}} \Delta_1 \Mon_g \right)
=\Delta_1^2 \Mon_{g-1} + \sum_{g'=1}^{g-1} \Delta_1 \Mon_{g'} \, \Delta_1 \Mon_{g-g'} .
\end{equation}
In order to determine the form of the solution $\Delta_1\Mon_g$ for $g\geq 1$, we will find it convenient to work in the
ring $\mcR$ of polynomials in $(1-4y_1)^{-1}$ and $\{ \eta_k (1 - \eta)^{-1}\}_{k \geq 1}$ over $\QQ$.  For $r\in\mcR$, the \emph{weighted degree} of $r$ is its degree as a polynomial in these quantities, where $(1-4y_1)^{-1}$ has degree 1, and $\eta_k (1 - \eta)^{-1}$ has degree $k$, $k\geq 1$. For $d\geq 0$, we let $\mcR_d$ denote the set of polynomials in $\mcR$ whose weighted degree is at most $d$.
  
As an example of this notation, from~\eqref{eq:PiAux} we immediately deduce that
\begin{equation}\label{eq:PiAuxR}
(1-\eta)^{-1}\Pi_2\left( y_2^k (1-4y_2)^{-\frac{3}{2}-k} \right) \in \mcR_{k}, \qquad k\geq 1.
\end{equation}

  \begin{proposition}\label{thm:TonR}
The operator $\T$ sends the ring $\mcR$ to itself, is locally nilpotent and preserves weighted degrees. The operator $1-T$ preserves weighted degrees and is invertible on $\mcR$.
  \end{proposition}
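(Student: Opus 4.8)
The plan is to prove the four assertions about $\T$ in the natural order: (1) $\T$ maps $\mcR$ to $\mcR$; (2) $\T$ preserves weighted degree; (3) $\T$ is locally nilpotent; and then (4) deduce that $1-\T$ preserves weighted degrees and is invertible, the last point being essentially formal given (1)--(3).

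For (1) and (2), the key is to unwind the definition $\T(F) = (1-\eta)^{-1}\Pi_2 (1-4y_2)^{-3/2}\Split_{1\to 2}\bigl((1-4y_1)F\bigr)$ on a spanning set of $\mcR$. Since $\mcR$ is generated by $(1-4y_1)^{-1}$ and the $\QQ[[\qq]]$-constants $\eta_k(1-\eta)^{-1}$, and $\T$ is $\QQ[[\qq]]$-linear, it suffices to compute $\T$ on monomials $(1-4y_1)^{-m}$ times a product of $\eta$-factors; the $\eta$-factors pull out, so really one needs $\T\bigl((1-4y_1)^{-m}\bigr)$ for $m\geq 0$. Applying $\Split_{1\to 2}$ to $(1-4y_1)^{1-m}$ produces, via the difference quotient $\frac{y_2 f(y_1)-y_1 f(y_2)}{y_1-y_2}+f(0)$, a polynomial in $y_1,y_2$ combined with powers $(1-4y_1)^{1-m},(1-4y_2)^{1-m}$; after multiplying by $(1-4y_2)^{-3/2}$ and applying $\Pi_2$ (which replaces $y_2^k$ by $q_k$ and isolates the $y_2^0$ term), one lands in the span of $\Pi_2\bigl(y_2^k(1-4y_2)^{-3/2-k}\bigr)$ times powers of $(1-4y_1)^{-1}$, and~\eqref{eq:PiAuxR} together with bookkeeping of the $y_1$-powers shows the weighted degree is exactly controlled. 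The main obstacle is precisely this bookkeeping: checking that when $(1-4y_1)^{-m}$ has weighted degree $m$, the output is a sum of terms $(1-4y_1)^{-a}\cdot\bigl[(1-\eta)^{-1}\Pi_2(y_2^b(1-4y_2)^{-3/2-b})\bigr]$ with $a+b=m$ (so weighted degree $m$), and that no term of strictly larger degree appears. One must handle the $F(0)$ term of $\Split$ and the explicit $-1$'s hidden in $\gamma(y_i),\eta(y_i)$ carefully; I expect the computation to hinge on the identity expressing $y_2^k(1-4y_2)^{-3/2}$ partial-fraction-style and on $\Pi_2\bigl((1-4y_2)^{-3/2}\bigr)=1+\eta$ cancelling the $(1-\eta)^{-1}$ prefactor to keep degrees from dropping spuriously.

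For local nilpotence (3), the natural approach is a filtration/valuation argument: introduce the valuation on $\QQ[[\yy,\qq]]$ (or on the relevant completed ring) given by total degree in $y_1$, and observe that $(1-4y_1)F$ has the same $y_1$-valuation as $F$, that $\Split_{1\to 2}$ applied to a series in $y_1$ of valuation $v$ produces a series whose total $y_1$-plus-$y_2$ valuation is at least $v$ but which, crucially, distributes degree between $y_1$ and $y_2$, and that $\Pi_2$ converts all $y_2$-powers into $\qq$-powers. So each application of $\T$ strictly increases the $\qq$-degree of the output relative to the input (the $\Split$ operator "moves" at least one factor of $y$ from the $y_1$-side to the $y_2$-side, which then becomes a $\qq$), except that one must be careful because $\Split$ also has the constant term $F(0)$; but $F\in\mcR$ has $F(0)$ equal to its value at $y_1=0$, which is a $\QQ[[\qq]]$-element of the same or lower weighted degree, and this does not obstruct nilpotence because after applying $\Pi_2$ and the prefactor the relevant quantity is $\qq$-divisible. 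Concretely, one shows $\T(\mcR_d)\subseteq \mcR_d$ and that $\T$ raises the minimal $\qq$-degree by at least one on each generator, hence $\T^N$ kills any fixed element once $N$ exceeds its weighted degree — giving local nilpotence on each finite-dimensional-over-$\QQ[[\qq]]$ piece $\mcR_d$.

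Finally (4) is immediate formally: since $\T$ preserves each $\mcR_d$ and acts on it as a $\QQ[[\qq]]$-linear locally nilpotent operator, $1-\T$ restricted to $\mcR_d$ is invertible with inverse $\sum_{n\geq 0}\T^n$ (a finite sum on any given element by local nilpotence), and since this inverse again preserves $\mcR_d$, the operator $1-\T$ preserves weighted degrees and is a bijection of $\mcR$. I would present (4) in one or two lines, devoting the bulk of the write-up to the explicit computation of $\T$ on $(1-4y_1)^{-m}$, which is the crux and the only place where genuine calculation (rather than formal nonsense) is required.
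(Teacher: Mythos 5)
Your overall strategy for parts (1), (2) and (4) -- reduce to a spanning set of powers by $\QQ[[\qq]]$-linearity, compute $\Split_{1\to 2}$ explicitly, control weighted degree via~\eqref{eq:PiAuxR}, and invert $1-\T$ by a geometric series that terminates -- is the same as the paper's. One practical remark: the bookkeeping you identify as the main obstacle largely evaporates if you work in the basis $Y_1^k$ with $Y_1 = y_1(1-4y_1)^{-1}$ instead of $(1-4y_1)^{-m}$, since then $\Split_{1 \to 2}\bigl( (1-4y_1)Y_1^k \bigr) = \sum_{i=1}^{k-1} Y_1^{k-i}Y_2^{i}$ and hence $\T(Y_1^k) = \sum_{i=1}^{k-1} Y_1^{k-i}\,(1-\eta)^{-1}\Pi_2\bigl( y_2^i(1-4y_2)^{-\frac{3}{2}-i} \bigr)$, with each factor in $\mcR_i$ by~\eqref{eq:PiAuxR}. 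In particular the term $(1-\eta)^{-1}\Pi_2 (1-4y_2)^{-\frac{3}{2}} = (1+\eta)(1-\eta)^{-1}$ that you anticipate never actually arises; this matters because that quantity is \emph{not} visibly in $\mcR$ (the $\QQ[[\qq]]$-generators are $\eta_k(1-\eta)^{-1}$ for $k \geq 1$ only), and your claim that it ``cancels the $(1-\eta)^{-1}$ prefactor'' is not a cancellation.

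The genuine gap is in step (3). You argue that each application of $\T$ raises the minimal $\qq$-degree by at least one, and conclude that $\T^N$ kills an element once $N$ exceeds its weighted degree. That inference is invalid: an operator on a $\QQ[[\qq]]$-module that raises the $\qq$-adic valuation (for instance, multiplication by $q_1$ on $\QQ[[\qq]]$ itself) has powers tending to zero only in the $\qq$-adic topology, and is not locally nilpotent; finite rank over $\QQ[[\qq]]$ does not rescue this. The correct mechanism -- and the one the paper uses -- is the other half of your own observation: because $\Split_{1\to 2}$ moves at least one power of $y$ from the $y_1$-slot to the $y_2$-slot, every term of $\T(Y_1^k)$ carries $Y_1^{k-i}$ with $i \geq 1$, so $\T$ strictly \emph{decreases} the degree in $(1-4y_1)^{-1}$; combined with $\T(1) = \T(Y_1) = 0$, repeated application of $\T$ annihilates any fixed element of $\mcR$ after finitely many steps. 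It is this strict drop in $y_1$-degree, not the rise in $\qq$-degree, that makes $\sum_{n \geq 0}\T^n$ a finite sum on each element; as written, your step (3) does not establish local nilpotence, and your step (4) depends on it.
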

  
  \begin{proof}
Let $Y_i=y_i(1-4y_i)^{-1}$, $i=1,2$. Since $\T(\eta_k(1-\eta)^{-1}r) =\eta_k(1-\eta)^{-1}\T(r)$ for $k\geq 1$ and $r\in\mcR$, it is sufficient to prove the result for the basis $\{ Y_1^k\}_{k\geq 0}$, in which $Y_1^k$ has weighted degree $k$. For $k = 0$, we have $T(1)=0$. For $k \geq 1$, it is routine to check that
\[
\Split_{1 \to 2} \left( (1 - 4y_1) Y_1^k\right)
= \frac{Y_1^kY_2 -Y_1 Y_2^k}{Y_1-Y_2}
\]
which equals $0$ for $k=1$. Thus $\T(Y_1)=0$, and for $k\geq 2$, we have
\begin{equation}\label{eq:TY1k}
      \T\left( Y_1^k \right)
=\sum_{i=1}^{k-1}  Y_1^{k-i}(1-\eta)^{-1}\Pi_2\left( (1-4y_2)^{-\frac{3}{2}} Y_2^{i}\right) .
\end{equation}
But $ Y_1^{k-i}\in\mcR_{k-i}$, and $k-i<k$ for all $i=1,\ldots ,k-1$. Also, from~\eqref{eq:PiAuxR} we have
\[
(1-\eta)^{-1}\Pi_2\left( (1-4y_2)^{-\frac{3}{2}}Y_2^{i}\right)   =(1-\eta)^{-1}\Pi_2\left( y_2^i(1-4y_2)^{-\frac{3}{2}-i}   \right)\in\mcR_i,
\]
which implies that $\T\left( Y_1^k \right) \in \mcR_k$. Furthermore, its degree in $(1-4y_1)^{-1}$ is strictly less than $k$, and, since $\T(1)=\T(Y_1)=0$, it follows that repeated application of $\T$ to any element of $\mcR$ is eventually zero.

Of course, the operator $1-\T$ also preserves weighted degrees, and it is invertible, with inverse given for any $r\in\mcR_d$ by
\[
(1-\T)^{-1}r=(1+\T+\T^2+\ldots)r = (1+\T+\ldots +\T^{d-1})r,
\]
since, from the proof above, $\T^i(r)=0$ for any $i\geq d$.
  \end{proof}
  
  \begin{proposition}\label{prop:DelR}
  For $r\in\mcR_d$ and $m \in \ZZ$, we have
  \begin{enumerate}[(i)]
  \item\label{item:DelR-i}
  \[
  (1-\eta)^{m+1} (1-4y_1)^{\frac{1}{2}}\Delta_1\left(  (1-\eta)^{-m}  r\right)\in\mcR_{d+2},
  \]
  \item\label{item:DelR-ii}
  \[
  (1-\eta)^{m+1} \Delta_1\left(  (1-\eta)^{-m}(1-4y_1)^{-\frac{1}{2}}  r\right)\in\mcR_{d+3}.
  \]
  \end{enumerate}
  \end{proposition}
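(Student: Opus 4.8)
The plan is to prove both parts of \autoref{prop:DelR} by direct computation, reducing everything to the action of $\Delta_1$ on the generators of $\mcR$ via the computational lemmas already established. First I would record, using the expression for $\Delta_1$ from \autoref{sec:changevars} together with \autoref{thm:delta_values}, the key elementary facts: $\Delta_1(1-4y_1)^{-1} = 8y_1(1-4y_1)^{-2}(1-4y_1)^{-3/2}(1-\eta)^{-1}$ (since $(1-4y_1)^{-1}$ is a function of $y_1$ alone, only the ``leading'' part of $\Delta_1$ contributes, and this is a routine chain-rule computation in the single variable $y_1$), and $\Delta_1\bigl(\eta_k(1-\eta)^{-1}\bigr)$, which follows from \autoref{thm:delta_values} by the quotient rule. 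The crucial structural observation is that $\Delta_1$, being a derivation, satisfies the Leibniz rule, so its action on a product of generators is determined by its action on each generator.

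The second ingredient is a bookkeeping lemma: every term produced by $\Delta_1$ applied to a generator of $\mcR$ has the shape $y_1(1-4y_1)^{-3/2}(1-\eta)^{-1}$ times something already in $\mcR$. Explicitly, writing $Y_1 = y_1(1-4y_1)^{-1}$ as in the proof of \autoref{thm:TonR}, one checks that $\Delta_1(1-4y_1)^{-1} = 8 Y_1 (1-4y_1)^{-3/2}(1-\eta)^{-1}\cdot(1-4y_1)^{-1}$ and $\Delta_1\bigl(\eta_k(1-\eta)^{-1}\bigr) = \eta_{k+1}(y_1)(1-\eta)^{-1} + 4Y_1(1-4y_1)^{-3/2}(1-\eta)^{-1}\cdot \eta_{k+1}(1-\eta)^{-1}$, plus the correction term from $\Delta_1$ acting on $(1-\eta)^{-1}$, which again is of the stated form. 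Note $\eta_{k+1}(y_1)$ is a power series in $y_1$, not manifestly in $\mcR$; but from the definitions in \autoref{sec:auxseries}, $\eta_{k+1}(y_1) = (y_1\partial_{y_1})^{k+1}(1-4y_1)^{-3/2}$, which is a polynomial in $(1-4y_1)^{-1}$ times $(1-4y_1)^{-3/2}$ — so it contributes a factor $(1-4y_1)^{-3/2}$ and a polynomial in $(1-4y_1)^{-1}$ of controlled degree. This is exactly where the ``extra'' $(1-4y_1)^{-1/2}$ in part \partref{item:DelR-ii} versus part \partref{item:DelR-i} comes from, and tracking these half-integer powers of $(1-4y_1)$ carefully is the main obstacle: one must verify that after multiplying by the prefactor $(1-\eta)^{m+1}(1-4y_1)^{1/2}$ (resp.\ with no $(1-4y_1)^{1/2}$), all half-integer powers cancel and one lands genuinely in $\mcR$.

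With these facts in hand, the proof of \partref{item:DelR-i} goes as follows. Write $(1-\eta)^{-m} r = (1-\eta)^{-m}\prod(\text{generators})$ and apply Leibniz. The term where $\Delta_1$ hits $(1-\eta)^{-m}$ produces $m(1-\eta)^{-m-1}\Delta_1\eta \cdot r$; since $\Delta_1\eta = \eta_1(y_1) + 4Y_1(1-4y_1)^{-3/2}(1-\eta)^{-1}\eta_1$ and $\eta_1(y_1) = y_1\partial_{y_1}(1-4y_1)^{-3/2}$ carries a factor $(1-4y_1)^{-3/2}$, multiplying by $(1-\eta)^{m+1}(1-4y_1)^{1/2}$ gives $(1-4y_1)^{1/2}\cdot[\text{stuff}\cdot(1-4y_1)^{-3/2}]\cdot r = (1-4y_1)^{-1}\cdot[\text{poly in }(1-4y_1)^{-1}]\cdot r \cdot(\text{deg }\le 1\text{ increase})$, landing in $\mcR_{d+2}$ — one must check the weighted degree of $\eta_1(y_1)$'s polynomial part plus the degree of the $Y_1$ factor totals at most $2$. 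Similarly, when $\Delta_1$ hits one generator of $r$: if it hits a factor $\eta_k(1-\eta)^{-1}$ (degree $k$), it becomes essentially $\eta_{k+1}(1-\eta)^{-1}$ (degree $k+1$) times a factor $Y_1(1-4y_1)^{-3/2}(1-\eta)^{-1}$ or the $\eta_{k+1}(y_1)$ piece; the prefactor $(1-\eta)^{1/2}(1-4y_1)^{1/2}$ absorbs the half-powers and one gains weighted degree $1$ from the $k\mapsto k+1$ shift and at most $1$ from the liberated $y_1$-dependence, total $+2$. If it hits a factor $(1-4y_1)^{-1}$, a similar count applies. Part \partref{item:DelR-ii} is identical in structure, except the extra factor $(1-4y_1)^{-1/2}$ in the argument means $\Delta_1$ may also hit it, contributing $\Delta_1(1-4y_1)^{-1/2} = 2y_1(1-4y_1)^{-3/2}(1-4y_1)^{-1/2}(1-\eta)^{-1}$, and the overall prefactor is now $(1-\eta)^{m+1}$ alone (no $(1-4y_1)^{1/2}$), so the accounting of half-powers shifts by one, yielding the bound $d+3$ instead of $d+2$. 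Throughout, since $\mcR_d$ is closed under multiplication by lower-degree elements and the ambient ring is an integral domain, the only real work is the weighted-degree arithmetic, which I would present as a short case analysis rather than exhaustively.
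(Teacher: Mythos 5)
Your plan is essentially the paper's proof: reduce to a generic monomial of $\mcR$ by the Leibniz rule, use \autoref{thm:delta_values} (and the structure of the auxiliary series $\eta_j(y_1)$ from \autoref{sec:auxseries}) to see that each term produced by $\Delta_1$ carries the right power of $(1-\eta)^{-1}$ and a controlled half-integer power of $(1-4y_1)$, do the weighted-degree arithmetic term by term, and deduce \partref{item:DelR-ii} from \partref{item:DelR-i} plus the one extra term where $\Delta_1$ hits $(1-4y_1)^{-\frac{1}{2}}$. One caution: the explicit ``key facts'' you record are off. Since $\Delta_1 f(y_1) = 4y_1(1-4y_1)^{-\frac{3}{2}}(1-\eta)^{-1}\, y_1 f'(y_1)$ for $f$ depending on $y_1$ alone, one gets $\Delta_1 (1-4y_1)^{-1} = 16\,y_1^2(1-4y_1)^{-\frac{7}{2}}(1-\eta)^{-1}$ (you dropped a factor of $y_1$ and halved the constant), and in $\Delta_1\eta_k$ the coefficient of $\eta_{k+1}(1-\eta)^{-1}$ is $4y_1(1-4y_1)^{-\frac{3}{2}} = 4Y_1(1-4y_1)^{-\frac{1}{2}}$, not $4Y_1(1-4y_1)^{-\frac{3}{2}}$; taken literally, that last error inflates the weighted degree by one and would break the bound $d+2$ in part \partref{item:DelR-i}. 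These are transcription slips rather than flaws in the method --- with the formulas as stated in \autoref{thm:delta_values} the degree count closes exactly as you describe.
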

  
  \begin{proof}
 Since $\Delta_1$ is a linear differential operator, it is sufficient to prove these results for a generic monomial $\mu=(1-4y_1)^{-k}\eta_{b_1}\cdots\eta_{b_j}(1-\eta)^{-j}$ with $k+b_1+\ldots +b_j=d$. Let $\rho=(1-\eta)^{-m} \mu$. 

For part~\partref{item:DelR-i}, apply the product rule to obtain 
\[
\Delta_1 \rho = 4k(1-4y_1)^{-1}\rho\Delta_1 y_1+(j+m)(1-\eta)^{-1}\rho\Delta_1\eta + \sum_{i=1}^j \frac{\rho}{\eta_{b_i}}\Delta_1\eta_{b_i} .
\]
Multiplying this equation by $(1-\eta)^{m+1} (1-4y_1)^{\frac{1}{2}}$ and applying  \autoref{thm:delta_values} and~\eqref{eq:PiAuxR}, it is straightforward to prove that each of the $j+2$ terms is contained in $\mcR_{d+2}$, giving the result.

For part~\partref{item:DelR-ii}, apply the product rule to determine $\Delta_1\left( (1-4y_1)^{-\frac{1}{2}}\rho \right)$, and the result follows from part~\partref{item:DelR-i} and \autoref{thm:delta_values}. 
\end{proof}
  
  We are now able to give an explicit form for $\Delta_1\Mon_g$, for any positive choice of genus $g$.
  
\begin{theorem}\label{thm:delta_H}
For $g \geq 1$,
  \[
   (1 - \eta)^{2g - 1} (1 - 4y_1)^{\frac{1}{2}} \Delta_1 \Mon_g \in\mcR_{3g-1}.
  \]
\end{theorem}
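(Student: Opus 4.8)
The plan is to prove the statement by strong induction on $g$, using the recursion~\eqref{eq:revjcut} and the machinery assembled in this section. The base case is $g=1$: here the right-hand side of~\eqref{eq:revjcut} is just $\Delta_1^2 \Mon_0$, and \autoref{prop:DelSqMon} gives $\Delta_1^2 \Mon_0 = y_1^2(1-4y_1)^{-2} = Y_1^2 \in \mcR_2$. Since $1-\T$ is invertible on $\mcR$ and preserves weighted degrees (\autoref{thm:TonR}), we conclude $(1-\eta)(1-4y_1)^{\frac12}\Delta_1\Mon_1 = (1-\T)^{-1}(Y_1^2) \in \mcR_2 = \mcR_{3\cdot 1 - 1}$, as required.

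For the inductive step, fix $g \geq 2$ and assume $(1-\eta)^{2g'-1}(1-4y_1)^{\frac12}\Delta_1\Mon_{g'} \in \mcR_{3g'-1}$ for all $1 \leq g' \leq g-1$. By~\eqref{eq:revjcut} and \autoref{thm:TonR}, it suffices to show that the right-hand side, once multiplied by the appropriate power of $(1-\eta)$, lands in $\mcR_{3g-1}$; more precisely I want to show
\[
  (1-\eta)^{2g-2}\left( \Delta_1^2 \Mon_{g-1} + \sum_{g'=1}^{g-1} \Delta_1\Mon_{g'}\,\Delta_1\Mon_{g-g'} \right) \in \mcR_{3g-2},
\]
and then applying $(1-\T)^{-1}$ (which preserves weighted degree) to this, after dividing by a single remaining $(1-\eta)$-free adjustment, yields the claim. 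Let me treat the two kinds of terms separately. For the product terms, write $\Delta_1\Mon_{g'} = (1-\eta)^{-(2g'-1)}(1-4y_1)^{-\frac12} r_{g'}$ with $r_{g'} \in \mcR_{3g'-1}$ by induction; then $\Delta_1\Mon_{g'}\,\Delta_1\Mon_{g-g'} = (1-\eta)^{-(2g-2)}(1-4y_1)^{-1} r_{g'} r_{g-g'}$, and $r_{g'}r_{g-g'} \in \mcR_{(3g'-1)+(3(g-g')-1)} = \mcR_{3g-2}$. Note $(1-4y_1)^{-1} = 1 + 4Y_1 \in \mcR_1$, so multiplying by $(1-\eta)^{2g-2}$ gives an element of $\mcR_{3g-1}$; but I actually need $\mcR_{3g-2}$ here, so I should be slightly more careful about bookkeeping — writing $(1-4y_1)^{-1}r_{g'}r_{g-g'}$ directly and tracking that the extra factor $(1-4y_1)^{-1}$ only costs degree $1$, giving $\mcR_{3g-1}$, which is exactly the target for the final answer after inverting $1-\T$. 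For the $\Delta_1^2\Mon_{g-1}$ term, I apply \autoref{prop:DelR} twice: first write $\Delta_1\Mon_{g-1} = (1-\eta)^{-(2g-3)}(1-4y_1)^{-\frac12} r_{g-1}$ with $r_{g-1} \in \mcR_{3g-4}$, so that by part~\partref{item:DelR-ii} with $m = 2g-3$ and $d = 3g-4$, we get $(1-\eta)^{2g-2}\Delta_1\Mon_{g-1} \in \mcR_{3g-1}$; then apply $\Delta_1$ once more, now using part~\partref{item:DelR-i}, to land $(1-\eta)^{2g-1}(1-4y_1)^{\frac12}\Delta_1^2\Mon_{g-1}$ in $\mcR_{3g+1}$ — and I must check the exponent arithmetic carefully to see this matches what~\eqref{eq:revjcut} demands after the $(1-\T)^{-1}$ step.

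The cleanest way to organize this is to first establish, as a lemma, that $(1-\eta)^{2g-1}(1-4y_1)^{\frac12}$ times the entire right-hand side of~\eqref{eq:revjcut} lies in $\mcR_{3g-1}$, applying \autoref{prop:DelR} and the induction hypothesis term by term with the exact exponents tracked; then \autoref{thm:lhs_T} together with \autoref{thm:TonR} (specifically the invertibility of $1-\T$ and its preservation of weighted degree) immediately promotes this to the same containment for $(1-\eta)^{2g-1}(1-4y_1)^{\frac12}\Delta_1\Mon_g$ itself. The main obstacle I anticipate is purely the exponent bookkeeping: \autoref{prop:DelR} raises weighted degree by $2$ or $3$ depending on which half-integer power of $(1-4y_1)$ is present, and the powers of $(1-\eta)$ shift by one each time $\Delta_1$ acts; getting these to align so that the double-derivative term $\Delta_1^2\Mon_{g-1}$ and the quadratic convolution terms both fit inside $\mcR_{3g-1}$ (rather than, say, $\mcR_{3g}$) requires using the induction hypothesis with the sharp degree bound $3g'-1$ and not a weaker one. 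I expect everything to close tightly, which is a good sanity check that the constant $3g-1$ in the statement is the correct one.
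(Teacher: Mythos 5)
Your overall strategy is exactly the paper's: induct on $g$, use \autoref{prop:DelSqMon} and \autoref{thm:TonR} for the base case, and for $g\geq 2$ multiply~\eqref{eq:revjcut} by $(1-\eta)^{2g-2}$, bound the right-hand side in $\mcR_{3g-1}$, and invoke the degree-preserving invertibility of $1-\T$. The base case and the quadratic terms are handled correctly (once you replace your initial target $\mcR_{3g-2}$ by the correct $\mcR_{3g-1}$, as you do mid-argument): the factorization $(1-4y_1)^{-1}\bigl((1-\eta)^{2g'-1}(1-4y_1)^{\frac12}\Delta_1\Mon_{g'}\bigr)\bigl((1-\eta)^{2(g-g')-1}(1-4y_1)^{\frac12}\Delta_1\Mon_{g-g'}\bigr)$ with weighted degree $1+(3g'-1)+(3(g-g')-1)=3g-1$ is precisely the paper's computation.

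The gap is in your treatment of $\Delta_1^2\Mon_{g-1}$, which is the one term where the exponents must align exactly, and in your version they do not. What~\eqref{eq:revjcut}, multiplied by $(1-\eta)^{2g-2}$, requires is that $(1-\eta)^{2g-2}\Delta_1^2\Mon_{g-1}\in\mcR_{3g-1}$ --- with no factor of $(1-4y_1)^{\frac12}$ and no further $\Delta_1$. This follows from a \emph{single} application of \thmpartref{prop:DelR}{item:DelR-ii}: writing $\Delta_1\Mon_{g-1}=(1-\eta)^{-(2g-3)}(1-4y_1)^{-\frac12}r$ with $r\in\mcR_{3g-4}$ by induction, part~\partref{item:DelR-ii} with $m=2g-3$ gives $(1-\eta)^{2g-2}\Delta_1\bigl((1-\eta)^{-(2g-3)}(1-4y_1)^{-\frac12}r\bigr)=(1-\eta)^{2g-2}\Delta_1^2\Mon_{g-1}\in\mcR_{(3g-4)+3}=\mcR_{3g-1}$, exactly as needed. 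Your plan instead applies \autoref{prop:DelR} twice (once via part~\partref{item:DelR-ii}, once via part~\partref{item:DelR-i}), which accounts for one $\Delta_1$ too many --- part~\partref{item:DelR-ii} already performs the outer differentiation --- and targets the quantity $(1-\eta)^{2g-1}(1-4y_1)^{\frac12}\Delta_1^2\Mon_{g-1}$, which never appears on the right-hand side of~\eqref{eq:revjcut}. The resulting bound $\mcR_{3g+1}$ is both for the wrong object and too weak, and you cannot recover the needed containment from it, since dividing by $(1-\eta)(1-4y_1)^{\frac12}$ does not act within $\mcR$. The fix is simply to delete the second application; with that change your argument closes and coincides with the paper's proof.
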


\begin{proof}
We proceed by induction on $g$. For the base case $g=1$, equation~\eqref{eq:revjcut} and \autoref{prop:DelSqMon} give
\begin{equation}\label{eq:jcutg1}
(1-T)(1-\eta)(1-4y_1)^{\frac{1}{2}}\Delta_1\Mon_1=y_1^2 (1 - 4y_1)^{-2}=Y_1^2\in\mcR_2 ,
\end{equation} 
and the result for $g=1$ follows immediately from \autoref{thm:TonR}.
   
Now consider an arbitrary $g \geq 2$, with the induction hypothesis that the result holds for all smaller positive values. Then if we multiply~\eqref{eq:revjcut} by $(1-\eta)^{2g-2}$, we obtain the equation
\begin{multline}\label{eq:indHg}
  (1-T)(1-\eta)^{2g-1}(1-4y_1)^{\frac{1}{2}}\Delta_1\Mon_g \\
  =  (1 - \eta)^{2g - 2} \Delta_1^2 \Mon_{g-1} + \sum_{g'=1}^{g-1} (1 - \eta)^{2g - 2}\Delta_1 \Mon_{g'} \, \Delta_1 \Mon_{g-g'}.
\end{multline}
Now consider the terms on the right-hand side of~\eqref{eq:indHg}. The term corresponding to the summand $g'$ can be written as
\[
(1-4y_1)^{-1}\left( (1 - \eta)^{2g' - 1} (1 - 4y_1)^{\frac{1}{2}} \Delta_1 \Mon_{g'}\right) \left( (1 - \eta)^{2(g-g') - 1} (1 - 4y_1)^{\frac{1}{2}}\Delta_1\Mon_{g-g'}\right)
\]
and from the induction hypothesis, this has weighted degree at most $1+(3g'-1)+(3(g-g')-1) = 3g-1$. For the remaining term, first apply the induction hypothesis to give
\[
\Delta_1\Mon_{g-1}=(1-\eta)^{3-2g}(1-4y_1)^{-\frac{1}{2}}r,\qquad \text{where}\quad r\in\mcR_{3g-4}.
\]
Then from \thmpartref{prop:DelR}{item:DelR-ii}, we have $(1-\eta)^{2g-2} \Delta_1^2\Mon_{g-1}\in\mcR_{(3g-4)+3}$. Thus all terms on the right-hand side of~\eqref{eq:indHg} have weighted degree at most $3g-1$. The result for $g$ follows immediately from \autoref{thm:TonR}.
  \end{proof}

\section{Generating functions for monotone Hurwitz numbers}\label{sec:gfcns}

In the previous section, we obtained results for $\Delta_1\Mon_g$, $g\geq 1$. In this section, we consider how to invert the operator $\Delta_1$, in order to obtain results for the generating function $\Mon_g$ itself, and thus prove parts~\partref{item:genus-one} and~\partref{item:rational-form} of \autoref{thm:main}. To accomplish this, we introduce the operator $\Theta_t$, whose action on elements of $\QQ[[\qq]]$ is the substitution $q_j \mapsto q_j t$, $j\geq 1$. For example, we immediately have
\begin{equation}\label{eq:propThetaseries}
\Theta_t\gamma=\gamma t,\qquad \Theta_t\eta=\eta t,\qquad \Theta_t\eta_j=\eta_j t, \quad j\geq 1,
\end{equation}
and for the operator $\wE$ introduced in~\eqref{eq:hatdieqdiep}, we have
\begin{equation}\label{eq:ThetawE}
\Theta_t\wE=t\diff{t}\Theta_t.
\end{equation}
Since
\begin{align*}
  \DD &= \sum_{k \geq 1} k p_k \diff{p_k} = \Pi_1 \Delta_1, \\
  \wD &= \sum_{k \geq 1} p_k \diff{p_k} = \Pi_1 \int\limits_0^{x_1} \frac{\mathrm{d}x_1 \Delta_1}{x_1} = \Pi_1 \int\limits_0^{y_1} \frac{\mathrm{d}y_1 \Delta_1}{y_1},
\end{align*}
we can apply $\Theta_t$ to equation~\eqref{eq:hatdieqdiep} to obtain
\[
\Theta_t\wE=\Theta_t\Phi\Delta_1,
\]
where 
\begin{equation}\label{eq:opPhi}
\Phi=\Pi_1\left( \int\limits_0^{y_1}\frac{\mathrm{d}y_1}{y_1}-\frac{2\gamma}{1-\gamma}\right) . 
\end{equation}
Thus, applying~\eqref{eq:ThetawE} to $\Mon_g$, we obtain
\begin{equation}\label{eq:invertDel}
\Mon_g=\int\limits_{0}^{1}\frac{\mathrm{d}t}{t}\Theta_t\Phi\Delta_1\Mon_g,\qquad g\geq 1.
\end{equation}

For the operator $\Phi$, using~\eqref{eq:PiAux}, it is straightforward to check that, for $j\geq 2$,
\begin{equation}\label{eq:propPhi}
\Phi (\eta(y_1)-\gamma(y_1))=\frac{2\gamma(1-\eta)}{1-\gamma},\;\; \Phi\eta_1(y_1)=\eta-\frac{2\gamma}{1-\gamma}\eta_1,\;\;\Phi\eta_j(y_1)=\eta_{j-1}-\frac{2\gamma}{1-\gamma}\eta_j .
\end{equation}

We are now able to deduce the explicit expression for the genus one monotone Hurwitz generating function stated in \thmpartref{thm:main}{item:genus-one}.

\begin{theorem}\label{cor:genfcnone}
  The generating function for genus one monotone Hurwitz numbers is given by
  \[
    \Mon_1 = \tfrac{1}{24} \log\frac{1}{1 - \eta} - \tfrac{1}{8} \log\frac{1}{1 - \gamma}.
  \]
\end{theorem}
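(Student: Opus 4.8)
The plan is to bootstrap from the $g = 1$ instance of the transformed join--cut recursion, invert the operator $\T$ using its local nilpotence, and then undo the lifting operator $\Delta_1$ via the integral formula~\eqref{eq:invertDel}. First I would start from~\eqref{eq:jcutg1}, namely $(1 - \T)\bigl((1-\eta)(1-4y_1)^{1/2}\Delta_1\Mon_1\bigr) = Y_1^2$ with $Y_1 = y_1(1-4y_1)^{-1}$. Since $Y_1^2 \in \mcR_2$, \autoref{thm:TonR} gives $(1-\T)^{-1}Y_1^2 = (1+\T)Y_1^2$, so the only computation needed is $\T(Y_1^2)$: by~\eqref{eq:TY1k} this equals $Y_1(1-\eta)^{-1}\Pi_2\bigl((1-4y_2)^{-3/2}Y_2\bigr)$, and rewriting $(1-4y_2)^{-3/2}Y_2 = y_2(1-4y_2)^{-5/2} = \tfrac16\eta_1(y_2)$ and applying~\eqref{eq:PiAux} yields $\T(Y_1^2) = \tfrac16 Y_1\eta_1(1-\eta)^{-1}$. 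Solving then gives $\Delta_1\Mon_1 = (1-\eta)^{-1}(1-4y_1)^{-1/2}\bigl(Y_1^2 + \tfrac16 Y_1\eta_1(1-\eta)^{-1}\bigr)$.

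Next I would put $\Delta_1\Mon_1$ into a form on which the operator $\Phi$ of~\eqref{eq:opPhi} can act through the known evaluations~\eqref{eq:propPhi}. Writing $y_1 = \tfrac14\bigl(1-(1-4y_1)\bigr)$ and unwinding the definitions of $\gamma(y_1),\eta(y_1),\eta_1(y_1)$ produces the identities $y_1(1-4y_1)^{-3/2} = \tfrac14\bigl(\eta(y_1)-\gamma(y_1)\bigr)$ and $y_1^2(1-4y_1)^{-5/2} = \tfrac1{24}\eta_1(y_1) - \tfrac1{16}\bigl(\eta(y_1)-\gamma(y_1)\bigr)$, so that $\Delta_1\Mon_1$ becomes a $\QQ[[\qq]]$-linear combination of $\eta_1(y_1)$ and $\eta(y_1)-\gamma(y_1)$. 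Applying $\Phi$ termwise via~\eqref{eq:propPhi}, the $\eta_1$-contributions cancel and one is left with the clean expression $\Phi\Delta_1\Mon_1 = \tfrac{\eta}{24(1-\eta)} - \tfrac{\gamma}{8(1-\gamma)}$.

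Finally I would invoke~\eqref{eq:invertDel}: applying $\Theta_t$ replaces $\eta$ and $\gamma$ by $\eta t$ and $\gamma t$ by~\eqref{eq:propThetaseries}, and then
\[
\Mon_1 = \int_0^1 \frac{\mathrm{d}t}{t}\left(\frac{\eta t}{24(1-\eta t)} - \frac{\gamma t}{8(1-\gamma t)}\right) = \tfrac1{24}\log\tfrac1{1-\eta} - \tfrac18\log\tfrac1{1-\gamma},
\]
which is the asserted formula; that this pins down $\Mon_1$ follows at once from \autoref{thm:pdeunique}, since $[p_0]\Mon_1 = 0$ and $\gamma,\eta$ have no constant term. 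I expect the only real obstacle to be organizational rather than conceptual: one must choose exactly the right way to re-express the half-integer powers of $1-4y_1$ in terms of $\eta_1(y_1)$ and $\eta(y_1)-\gamma(y_1)$ so that~\eqref{eq:propPhi} becomes applicable, and then carefully track the cancellations that occur after applying $\Phi$.
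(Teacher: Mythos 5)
Your proposal is correct and follows essentially the same route as the paper: invert $1-\T$ on $Y_1^2$ using \autoref{thm:TonR} and~\eqref{eq:TY1k}, rewrite the result as a $\QQ[[\qq]]$-linear combination of $\eta_1(y_1)$ and $\eta(y_1)-\gamma(y_1)$ via $4y_1(1-4y_1)^{-\frac{3}{2}}=\eta(y_1)-\gamma(y_1)$, apply $\Phi$ through~\eqref{eq:propPhi} (with the $\eta_1$-terms cancelling), and integrate via~\eqref{eq:invertDel}. All the intermediate identities you state, including $\T(Y_1^2)=\tfrac16 Y_1\eta_1(1-\eta)^{-1}$ and the resulting expression $\Phi\Delta_1\Mon_1=\tfrac{\eta}{24(1-\eta)}-\tfrac{\gamma}{8(1-\gamma)}$, agree with the paper's computation.
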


\begin{proof}
  
From \autoref{eq:jcutg1} and \autoref{thm:TonR}, we have
\[
\Delta_1\Mon_1=(1-\eta)^{-1}(1-4y_1)^{-\frac{1}{2}}(1+\T)Y_1^2,
\]
and, simplifying this using~\eqref{eq:TY1k} with $k=2$, after noting that $4y_1 (1 - 4y_1)^{-\frac{3}{2}}=\eta(y_1) - \gamma(y_1)$, we obtain
  \begin{equation*}
  \begin{aligned}
   \Theta_t \Phi \Delta_1 \Mon_1 &=\Theta_t \Phi \left(\frac{2\eta_1(y_1) - 3\eta(y_1) + 3\gamma(y_1)}{48(1 - \eta)} + \frac{(\eta(y_1) - \gamma(y_1)) \eta_1}{24(1 - \eta)^2}\right) \\
   &= \frac{\eta t}{24(1-\eta t)} - \frac{\gamma t}{8(1-\gamma t)},
   \end{aligned}
  \end{equation*}
where for the second equality we have used~\eqref{eq:propPhi} and simplified, and then
 applied~\eqref{eq:propThetaseries}.
The result follows from~\eqref{eq:invertDel}, together with the fact that $\Mon_1$ has constant term $0$.
\end{proof}

For genus two or more, we are able to obtain a polynomiality result for the monotone Hurwitz number generating function $\Mon_g$.

\begin{theorem}\label{thm:MonTwoPoly}
  For $g \geq 2$, we have
  \[
  \Mon_g\in\QQ [\{ \eta_k(1-\eta)^{-1}\}_{k\geq 1}, (1-\eta)^{-1}].
  \]
  Moreover, each monomial $\eta_{\alpha}(1-\eta)^{-\ell(\alpha)-n}$ that appears in $\Mon_g$ has weighted degree $\abs{\alpha} \leq 3g-3$ in $\{ \eta_k(1-\eta)^{-1}\}_{k\geq 1}$, and degree  $n\leq 2g-2$ in $(1-\eta)^{-1}$.
\end{theorem}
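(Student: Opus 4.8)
The plan is to take the description of $\Delta_1\Mon_g$ furnished by \autoref{thm:delta_H} and push it through the inversion formula \eqref{eq:invertDel}, $\Mon_g=\int_0^1\frac{\mathrm{d}t}{t}\,\Theta_t\Phi\Delta_1\Mon_g$, keeping track at every stage of two gradings: the weighted degree in the quantities $\{\eta_k(1-\eta)^{-1}\}_{k\ge1}$, and the degree in $(1-\eta)^{-1}$.

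The first step is to rewrite $\Delta_1\Mon_g$ so that $\Phi$ can be applied termwise. By \autoref{thm:delta_H} we may write $\Delta_1\Mon_g=(1-\eta)^{1-2g}(1-4y_1)^{-\frac12}r_g$ with $r_g\in\mcR_{3g-1}$; expanding $r_g$ over the monomials $(1-4y_1)^{-k}\eta_\beta(1-\eta)^{-\ell(\beta)}$, $k+\abs{\beta}\le 3g-1$, exhibits $\Delta_1\Mon_g$ as a $\QQ$-linear combination of the terms $(1-4y_1)^{-\frac12-k}\eta_\beta(1-\eta)^{1-2g-\ell(\beta)}$. A short computation with the generalized binomial series then gives $(1-4y_1)^{-\frac12}=1+\gamma(y_1)$ and, for $k\ge1$, $(1-4y_1)^{-\frac12-k}=1+\eta(y_1)+\sum_{j=1}^{k-1}d_j^{(k)}\eta_j(y_1)$ for appropriate $d_j^{(k)}\in\QQ$ — the coefficient of $y_1^m$ in such a power is $(2m+1)\binom{2m}{m}$ times a polynomial in $m$ of degree $<k$, and the constant term is always $1$. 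Substituting this and using that $\Delta_1$ always returns a series with vanishing constant term in $y_1$ (so the coefficients of $1$ must cancel), I can recast $\Delta_1\Mon_g$ as a $\QQ[\{\eta_k(1-\eta)^{-1}\},(1-\eta)^{-1}]$-linear combination of $\eta(y_1)-\gamma(y_1)$ and of the $\eta_j(y_1)$, $j\ge1$, whose coefficients have degrees controlled by $g$ and $\ell(\beta)$.

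Now I would apply $\Phi$, $\Theta_t$, and the $t$-integral in turn: formulas \eqref{eq:propPhi} turn $\eta(y_1)-\gamma(y_1)$ and $\eta_j(y_1)$ into explicit elements of $\QQ[\gamma,\{\eta_k\},(1-\gamma)^{-1},(1-\eta)^{-1}]$, \eqref{eq:propThetaseries} carries out $\Theta_t$, and $\int_0^1\frac{\mathrm{d}t}{t}\Theta_t$ is then evaluated monomial by monomial (noting that $\Phi\Delta_1$ multiplies each $q$-homogeneous piece by its degree, so the integral merely divides it back). In genus one this recovers at once the formula $\Mon_1=\tfrac{1}{24}\log\frac{1}{1-\eta}-\tfrac{1}{8}\log\frac{1}{1-\gamma}$. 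For $g\ge2$ the heart of the matter — and where I expect essentially all the work — is to show that neither a $\log\frac{1}{1-\eta}$ nor any $\gamma$-dependence survives in the result. Concretely: the $\gamma$-part of $\Phi\Delta_1\Mon_g$ equals $\frac{2\gamma}{1-\gamma}$ times an explicit $\gamma$-free combination of the coefficients of $r_g$, while the pure powers $(1-\eta)^{-m}$ arising along the way (which, on integration against $\frac{\mathrm{d}t}{t}\Theta_t$, create $\log\frac{1}{1-\eta}$ and threaten the degree bound) carry coefficients that are again explicit combinations of those coefficients; one must prove that each of these combinations vanishes, precisely because $2g-2\ne 0$. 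I would extract these vanishing identities from the recursion \eqref{eq:revjcut} together with \autoref{prop:DelSqMon} and \autoref{prop:DelR}, arguing by induction on $g$: since $\gamma$ never enters the $\mcR$-valued recursion for $\Delta_1\Mon_g$ itself, both the logarithm and the $\gamma$ can only be introduced by the single application of $\Phi$, and the cancellations are forced exactly in positive genus. With that in place, a final accounting of how $\Phi$ and the integration move the two gradings yields the stated bounds $\abs{\alpha}\le 3g-3$ and $n\le 2g-2$; the very factor $2g-2$ making the cancellations possible is the one appearing in the denominators of \thmpartref{thm:main}{item:rational-form} and, via Matsumoto--Novak, in the Bernoulli-number constant of \thmpartref{thm:main}{item:bernoulli}.
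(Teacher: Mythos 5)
Your outline coincides with the paper's: decompose $(1-\eta)^{2g-1}\Delta_1\Mon_g$ over the basis $(1-4y_1)^{-\frac12}$, $\eta(y_1)-\gamma(y_1)$, $\eta_{j-1}(y_1)$ with coefficients $F_{g,j}\in\QQ[\eta_k(1-\eta)^{-1}]$ of controlled weighted degree, kill the $(1-4y_1)^{-\frac12}$ coefficient by setting $y_1=0$, push the rest through $\Phi$ via~\eqref{eq:propPhi}, and evaluate $\int_0^1\frac{\mathrm{d}t}{t}\Theta_t$ explicitly (where the substitution $z=\eta t/(1-\eta t)$ turns the integrand into $z^m(1+z)^{2g-3}$, a polynomial precisely when $g\geq 2$, so no logarithm survives that stage). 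You have also correctly located the crux: after~\eqref{eq:propPhi}, every basis element contributes a term proportional to $\frac{2\gamma}{1-\gamma}$, and one needs a single linear relation among $F_{g,1},\dots,F_{g,3g-1}$ --- namely~\eqref{eq:cond2} --- for all $\gamma$-dependence to cancel.

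The gap is that you do not actually produce that relation; ``I would extract these vanishing identities from the recursion, arguing by induction on $g$'' is the entire hard step of the proof left unperformed. The paper's mechanism is specific and not implied by anything you wrote: apply the evaluation $\Omega$ at $(1-4y_1)^{-1}=0$ to the operator identity $\Omega(1-\T)(1-4y_1)^{\frac12}$ acting on~\eqref{eq:linear-combination}. On the basis side one computes $\Omega(1-\T)(1-4y_1)^{\frac12}(\eta(y_1)-\gamma(y_1))=-1$ and $\Omega(1-\T)(1-4y_1)^{\frac12}\eta_{j-1}(y_1)=\eta_{j-1}/(1-\eta)$; on the other side one uses~\eqref{eq:revjcut} and checks that $\Omega$ annihilates the entire right-hand side when $g\geq 2$, because each cross term $\Delta_1\Mon_{g'}\Delta_1\Mon_{g-g'}$ carries an explicit extra factor $(1-4y_1)^{-1}$ and $(1-\eta)^{2g-2}\Delta_1^2\Mon_{g-1}$ also vanishes at $y_1=\infty$ by \autoref{thm:delta_values}. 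That yields $-F_{g,1}+\sum_{j\geq 2}F_{g,j}\eta_{j-1}(1-\eta)^{-1}=0$, which is exactly the combination multiplying $\frac{2\gamma}{1-\gamma}$ after applying $\Phi$. Moreover, your proposed justification --- ``$\gamma$ never enters the $\mcR$-valued recursion, so the cancellations are forced in positive genus, because $2g-2\neq 0$'' --- proves too much: the identical reasoning applies at $g=1$, where the cancellation fails ($\Omega Y_1^2=\tfrac{1}{16}\neq 0$) and $\Mon_1$ genuinely contains $\log\frac{1}{1-\gamma}$. So the heuristic cannot substitute for the computation, and as written the argument does not close.
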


\begin{proof}
Note from \autoref{sec:auxseries} that the elements
  \[
    1, \quad (\eta(y_1) - \gamma(y_1))(1 - 4y_1)^{\frac{1}{2}}, \quad \eta_1(y_1)(1 - 4y_1)^{\frac{1}{2}}, \quad \eta_2(y_1)(1 - 4y_1)^{\frac{1}{2}}, \quad \ldots
  \]
  are polynomials in $(1 - 4y_1)^{-1}$ of degree $0, 1, 2, 3, \ldots$ respectively, so by \autoref{thm:delta_H}, we know that we can write
  \begin{equation}\label{eq:linear-combination}
    (1-\eta)^{2g-1}\Delta_1 \Mon_g = F_{g,0} (1 - 4y_1)^{-\frac{1}{2}} + F_{g,1} \big( \eta(y_1) - \gamma(y_1) \big) + \sum_{j = 2}^{3g - 1} F_{g,j} \eta_{j-1}(y_1),
  \end{equation}
  where, for $j = 0, 1, \ldots, 3g - 1$, $F_{g,j}$ is an element of $\QQ[\eta_k (1 - \eta)^{-1}]_{k \geq 1}$. Note also that $F_{g,j}$ has weighted degree at most $3g-1-j$, for $j=0,\ldots ,3g-1$. If we set $y_1 = 0$ in \eqref{eq:linear-combination}, we get 
  \begin{equation}\label{eq:cond1}
  F_{g,0} = 0,
  \end{equation}
  since $\Delta_1 \Mon_g$ has no constant term as a power series in $y_1$. 
  
  Next, note that when we are dealing with polynomials in $(1 - 4y_1)^{-1}$, we can evaluate them at $y_1 = \infty$, or equivalently, at $(1 - 4y_1)^{-1} = 0$, and denote this evaluation by the operator $\Omega$. Now suppose we apply the operator
  \[
   \Omega (1 - \T) (1 - 4y_1)^{\frac{1}{2}}
  \]
  to~\eqref{eq:linear-combination}. Taking into account~\eqref{eq:cond1}, we obtain on the right-hand side
  \begin{equation}\label{eq:rhsOMT}
    F_{g,1} \Omega (1 - \T) (1-4y_1)^{\frac{1}{2}}( \eta(y_1) - \gamma(y_1)) + \sum_{j = 2}^{3g - 1} F_{g,j}\Omega(1 - \T) (1-4y_1)^{\frac{1}{2}} \eta_{j-1}(y_1).
  \end{equation}
  By direct computation, we have
  \[
    \Omega (1 - \T) (1-4y_1)^{\frac{1}{2}}( \eta(y_1) - \gamma(y_1)) = -1.
  \]
  To evaluate the remaining terms in~\eqref{eq:rhsOMT}, note that for $j \geq 2$, we have $(1-4y_1)^{\frac{3}{2}}\eta_{j-1}(y_1)=y_1 a_{j-1}(y_1)$, where from \autoref{sec:auxseries}, we know that $a_{j-1}(y_1)$ is a polynomial in $(1-4y_1)^{-1}$ with no constant term (in $(1-4y_1)^{-1}$). It follows that
  \[
    \Omega (1-4y_1)^{\frac{1}{2}} \eta_{j-1}(y_1) = 0,
  \]
  and it is routine to check that
  \begin{equation*}
    \Split_{1 \to 2} \left( (1 - 4y_1)^{\frac{3}{2}}\eta_{j-1}(y_1)\right)
    = \left(\frac{1}{ 1-4y_1}-1\right) \frac{y_2}{1-4y_2}\frac{a_{j-1}(y_1)-a_{j-1}(y_2)}{(1-4y_1)^{-1}-(1-4y_2)^{-1}},
  \end{equation*}
  so that we have
  \[
    \Omega\T (1-4y_1)^{\frac{1}{2}}\eta_{j-1}(y_1) = - (1-\eta)^{-1}\Pi_2 y_2(1-4y_2)^{-\frac{3}{2}}a_{j-1}(y_2)=-\frac{\eta_{j-1}}{1-\eta},
  \]
  from~\eqref{eq:PiAux}. Putting these together,~\eqref{eq:rhsOMT} becomes
  \[
    -F_{g,1} + \sum_{j=2}^{3g-1} F_{g,j}\frac{\eta_{j-1}}{1-\eta}.
  \]

Now, when we apply $\Omega (1 - \T) (1 - 4y_1)^{\frac{1}{2}}$ to the left-hand side of~\eqref{eq:linear-combination}, and use~\eqref{eq:revjcut}, we get
\[
(1-\eta)^{2g-2}\Omega\left( \Delta_1^2 \Mon_{g-1} + \sum_{g'=1}^{g-1} \Delta_1 \Mon_{g'} \, \Delta_1 \Mon_{g-g'}\right) .
\]
But from the proof of \autoref{thm:delta_H}, specifically the analysis of the right-hand side of~\eqref{eq:indHg}, we see that for $g\geq 2$, every term in the summation over $g'$ is a polynomial in $(1-4y_1)^{-1}$ multiplied by an additional factor of $(1-4y_1)^{-1}$, and so $\Omega$ sends the summation to $0$. We can also deduce from \autoref{thm:delta_values} that the remaining term is also sent to $0$ by $\Omega$. Putting both sides together and multiplying by $2\gamma(1-\eta)/(1-\gamma)$, we obtain the equation
\begin{equation}\label{eq:cond2}
 -F_{g,1}\frac{2\gamma(1-\eta)}{1-\gamma}+\sum_{j=2}^{3g-1}F_{g,j}\frac{2\gamma\eta_{j-1}}{1-\gamma}=0.
\end{equation}
 
 Now, from~\eqref{eq:linear-combination}, using~\eqref{eq:cond1} and~\eqref{eq:propPhi}, we have
 \begin{equation*}
 \begin{aligned}
 (1-\eta)^{2g-1}\Phi\Delta_1\Mon_g&=
 F_{g,1}\frac{2\gamma(1-\eta)}{1-\gamma}+F_{g,2}\left( \eta-\frac{2\gamma\eta_1}{1-\gamma} \right) \\
 &\qquad\qquad\quad\;\;\qquad +\sum_{j=3}^{3g-1} F_{g,j}\left(\eta_{j-2} -\frac{2\gamma\eta_{j-1}}{1-\gamma}\right)\\
 &= F_{g,2}\eta+\sum_{j=3}^{3g-1} F_{g,j}\eta_{j-2},
 \end{aligned}
 \end{equation*}
where the second equality follows from~\eqref{eq:cond2}. Thus, from~\eqref{eq:invertDel}, we have
\begin{equation*}
\Mon_g=\int\limits_{0}^{1}\frac{\mathrm{d}t}{t}\Theta_t \left( \frac{F_{g,2}\eta+\sum_{j=3}^{3g-1} F_{g,j}\eta_{j-2}}{(1-\eta)^{2g-1}}\right) .
\end{equation*}

But $F_{g,j}$ has weighted degree at most $3g-1-j$, and using~\eqref{eq:propThetaseries} we obtain
\begin{equation}\label{eq:Monfinalint}
\begin{aligned}
\Mon_g &= \int\limits_{0}^{1}\mathrm{d}t\Bigg( \sum_{d=0}^{3g-3}\sum_{\alpha\vdash d} b_{g,\alpha}\eta_{\alpha}\eta\frac{t^{\ell(\alpha)}}{(1-\eta t)^{\ell(\alpha)+2g-1}}\\
&\qquad\qquad + \sum_{d=1}^{3g-3}\sum_{\alpha\vdash d} e_{g,\alpha}\eta_{\alpha}\frac{t^{\ell(\alpha)-1}}{(1-\eta t)^{\ell(\alpha)+2g-2}}\Bigg) ,
\end{aligned}
\end{equation}
where $b_{g,\alpha}$ and $e_{g,\alpha}$ are rational numbers. Now, integrating, we obtain
\begin{equation*}
\begin{aligned}
\int\limits_0^1 \frac{t^m}{(1-\eta t)^{m+2g-1}}\mathrm{d}t &=\frac{1}{\eta^{m+1}}\int\limits_0^{\frac{\eta}{1-\eta}}z^m(1+z)^{2g-3}\mathrm{d}z,\qquad\text{where $z=\frac{\eta t}{1-\eta t}$},\\
&= \sum_{i=0}^{2g-3}\binom{2g-3}{i}\frac{1}{m+1+ i}\frac{\eta^{i}}{(1-\eta)^{m+1+ i}},
\end{aligned}
\end{equation*}
which is equal to $(1-\eta)^{m+1}$ times a polynomial over $\QQ$ in $(1-\eta)^{-1}$ of degree at most $2g-3$. The result follows by applying this to each term of~\eqref{eq:Monfinalint}, using $\eta=1-(1-\eta)$ for the isolated $\eta$ in the first summation.
\end{proof}

Finally, by refining the polynomiality result of \autoref{thm:MonTwoPoly}, we are able to prove the explicit form for the monotone Hurwitz number generating function with genus $g\geq 2$ given in \thmpartref{thm:main}{item:rational-form}.

\begin{theorem}\label{thm:Mongenfcntwo}
  For $g \geq 2$, the generating function for genus $g$ monotone single Hurwitz numbers is given by
  \[
    \Mon_g = -c_{g,(0)} + \sum_{d = 0}^{3g - 3} \sum_{\alpha \vdash d} \frac{c_{g,\alpha} \, \eta_\alpha}{(1 - \eta)^{\ell(\alpha) + 2g-2}},
  \]
  where the $c_{g,\alpha}$ are rational numbers.
\end{theorem}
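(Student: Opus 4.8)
The plan is to start from the expression for $\Mon_g$ obtained in the proof of \autoref{thm:MonTwoPoly}, namely
\[
  \Mon_g = \int_0^1 \frac{\mathrm{d}t}{t}\,\Theta_t\!\left( \frac{F_{g,2}\eta + \sum_{j=3}^{3g-1} F_{g,j}\eta_{j-2}}{(1-\eta)^{2g-1}} \right),
\]
where each $F_{g,j}$ lies in $\QQ[\eta_k(1-\eta)^{-1}]_{k\geq 1}$ with weighted degree at most $3g-1-j$. \autoref{thm:MonTwoPoly} already tells us that $\Mon_g$ is a polynomial in $(1-\eta)^{-1}$ and the $\eta_k(1-\eta)^{-1}$, and pins down the bounds $\abs{\alpha}\leq 3g-3$ and $n\leq 2g-2$ on the exponents appearing. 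So the only additional content of this theorem over the previous one is the precise shape of the denominators: every monomial must be of the form $\eta_\alpha(1-\eta)^{-\ell(\alpha)-2g+2}$, i.e.\ the power of $(1-\eta)^{-1}$ in excess of $\ell(\alpha)$ is exactly $2g-2$, not merely at most $2g-2$, together with the isolated constant term $-c_{g,(0)}$.

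The key step is therefore a careful bookkeeping of which powers of $(1-\eta)^{-1}$ can actually occur. First I would expand $\Theta_t$ using~\eqref{eq:propThetaseries}, giving $\Theta_t\eta_\alpha = \eta_\alpha t^{\ell(\alpha)}$ and $\Theta_t\eta = \eta t$, so that the integrand becomes a sum of terms $b_{g,\alpha}\eta_\alpha\eta\, t^{\ell(\alpha)}(1-\eta t)^{-\ell(\alpha)-2g+1}$ and $e_{g,\alpha}\eta_\alpha\, t^{\ell(\alpha)-1}(1-\eta t)^{-\ell(\alpha)-2g+2}$, exactly as in~\eqref{eq:Monfinalint}. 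Then I would carry out the $t$-integration using the substitution $z = \eta t/(1-\eta t)$ already set up in the proof of \autoref{thm:MonTwoPoly}: the integral $\int_0^1 t^m(1-\eta t)^{-m-2g+1}\mathrm{d}t$ equals $\sum_{i=0}^{2g-3}\binom{2g-3}{i}(m+1+i)^{-1}\eta^i(1-\eta)^{-m-1-i}$. Substituting $m = \ell(\alpha)$ (first sum) and $m=\ell(\alpha)-1$ (second sum) and using $\eta = 1-(1-\eta)$ to absorb the isolated $\eta$ and each factor of $\eta^i$, one sees that every resulting monomial is $\eta_\alpha$ times a $\QQ$-linear combination of $(1-\eta)^{-k}$ with $\ell(\alpha)\leq k\leq \ell(\alpha)+2g-2$. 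The claim is that after summing all contributions the highest power $k=\ell(\alpha)+2g-2$ survives with a possibly-nonzero coefficient $c_{g,\alpha}$, while all lower powers $k<\ell(\alpha)+2g-2$ cancel; equivalently, the only monomials that survive are those with the maximal denominator exponent and the bare constant.

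The main obstacle will be establishing that cancellation. The cleanest route is to argue structurally rather than by brute force: observe that $\Mon_g$, as produced by~\eqref{eq:invertDel}, is obtained by applying $\Theta_t$, integrating $\mathrm{d}t/t$ over $[0,1]$, and this whole operation takes a homogeneous expression to another one of a controlled type. More concretely, I would note that the operator $\int_0^1 (\mathrm{d}t/t)\Theta_t$ applied to any $\eta_\alpha\eta^{a}(1-\eta t)^{-\ell(\alpha)-a-2g+2}\cdot(\text{stuff in }q\text{ of the right homogeneity})$ is governed by $\wE$-homogeneity via~\eqref{eq:ThetawE}: $\wE$ acts as multiplication by total degree in the $q_k$, and one checks that $\eta_\alpha(1-\eta)^{-\ell(\alpha)-n}$ is an $\wE$-eigenvector only for the extremal value $n=2g-2$ after the integration is performed, because $\wE\bigl(\eta_\alpha(1-\eta)^{-m}\bigr)$ produces both $\eta_\alpha(1-\eta)^{-m}$-type terms and $\eta_\alpha(1-\eta)^{-m-1}$-type terms, and solving the resulting recursion forces the lower-denominator coefficients to vanish. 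Alternatively—and this is probably the honest way the authors proceed—one simply does the explicit partial-fraction computation above, grouping the sum over $i$, and verifies that the telescoping in $\binom{2g-3}{i}/(m+1+i)$ together with the $\eta=1-(1-\eta)$ substitutions kills everything below the top. I would present the explicit computation as the backbone and remark that the surviving top coefficient is the rational number $c_{g,\alpha}$ and the leftover genus-dependent constant (from the $\alpha=\emptyset$, $d=0$ term combined with the $-c_{g,(0)}$ normalization already identified in \autoref{sec:bernoulli}) is exactly $-c_{g,(0)}$, completing the proof.
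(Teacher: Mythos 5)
You have correctly isolated the actual content of this theorem beyond \autoref{thm:MonTwoPoly}: one must show that the power of $(1-\eta)^{-1}$ attached to $\eta_\alpha$ is exactly $\ell(\alpha)+2g-2$, not merely at most that, plus a bare constant. But neither mechanism you offer for the required cancellation works, and this is a genuine gap. Your homogeneity argument is incorrect as stated: $\eta_\alpha(1-\eta)^{-\ell(\alpha)-n}$ is not an eigenvector of $\wE$ for any $n$ (one has $\wE\eta=\eta$ but $\wE(1-\eta)^{-m}=m\eta(1-\eta)^{-m-1}$), and the assertion that ``solving the resulting recursion forces the lower-denominator coefficients to vanish'' is not backed by any computation. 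Your fallback --- that the telescoping in $\binom{2g-3}{i}/(m+1+i)$ together with $\eta=1-(1-\eta)$ ``kills everything below the top'' --- also fails: each individual integral in \eqref{eq:Monfinalint} genuinely produces all powers $(1-\eta)^{-k}$ with $\ell(\alpha)\leq k\leq \ell(\alpha)+2g-2$, and there is no identity internal to a single term that removes the lower ones. The cancellation happens only across the full sum and depends on relations among the coefficients $b_{g,\alpha}$, $e_{g,\alpha}$ (equivalently, among the $F_{g,j}$) that your argument never establishes.

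The paper's route is different and avoids the integral entirely at this stage. Starting from the qualitative form given by \autoref{thm:MonTwoPoly}, write $\Mon_g$ as a linear combination of the monomials $\rho_{\alpha,n}=\eta_\alpha(1-\eta)^{-\ell(\alpha)-n}$ with $\abs{\alpha}\leq 3g-3$ and $n\leq 2g-2$, and apply $\Delta_1$. By \thmpartref{prop:DelR}{item:DelR-i}, $(1-\eta)^{n+1}(1-4y_1)^{\frac{1}{2}}\Delta_1\rho_{\alpha,n}\in\mcR_{\abs{\alpha}+2}$, so a monomial with parameter $n$ contributes to $\Delta_1\Mon_g$ with overall prefactor $(1-\eta)^{-(n+1)}$. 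Comparing with \autoref{thm:delta_H}, which says $(1-\eta)^{2g-1}(1-4y_1)^{\frac{1}{2}}\Delta_1\Mon_g\in\mcR_{3g-1}$, forces $n=2g-2$ for every monomial with $\Delta_1\rho_{\alpha,n}\neq 0$, i.e.\ for all $(\alpha,n)\neq((0),0)$; the remaining constant is then fixed to be $-c_{g,(0)}$ because $\Mon_g$ has zero constant term. To salvage your approach you would need to import this external constraint (or an equivalent one) rather than hoping the integral computation cancels on its own.
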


\begin{proof}
From \autoref{thm:MonTwoPoly}, we know that $\Mon_g$ is a linear combination of the monomials $\rho_{\alpha,n}= \eta_{\alpha}(1-\eta)^{-\ell(\alpha)-n}$, where  $\abs{\alpha} =d\leq 3g-3$ and $n\leq 2g-2$. Then $\Delta_1\rho_{(0),0}=0$, and from \thmpartref{prop:DelR}{item:DelR-i}, we have
\[
(1-\eta)^{n+1}(1-4y_1)^{\frac{1}{2}}\Delta_1\rho_{\alpha,n}\in\mcR_{d+2}.
\]
 Then, if $(\alpha,n)\neq((0),0)$, \autoref{thm:delta_H} implies that $n=2g-2$, so we have
 \[
\Mon_g=c+ \sum_{d = 0}^{3g - 3} \sum_{\alpha \vdash d} \frac{c_{g,\alpha} \, \eta_\alpha}{(1 - \eta)^{ \ell(\alpha)+2g-2}}, \]
where $c_{g,\alpha}$ are rational numbers. But $\Mon_g$ has constant term $0$, so $c=-c_{g,(0)}$, giving the result.
\end{proof}
\section{Explicit formulas for monotone Hurwitz numbers}\label{sec:explicit}

In the previous section, we obtained explicit results for $\Mon_g$, $g\geq 1$. In this section, we consider the coefficients in these generating functions. To begin,
the coefficient extraction operators $[p_\alpha]$ and $[q_\alpha]$, defined on $\QQ[[\pp]] = \QQ[[\qq]]$, can be expressed in terms of each other using the multivariate Lagrange Implicit Function Theorem~\cite[Theorem 1.2.9]{goulden-jackson04}, as follows.

\begin{lemma}\label{thm:LIFT}
  If $\alpha \vdash d$ is a partition and $F$ is an element of $\QQ[[\pp]]$, then
  \[
    [p_\alpha] F = [q_\alpha] \frac{(1 - \eta) F}{(1 - \gamma)^{2d + 1}},
  \]
  where
  \[
    q_j = p_j (1-\gamma )^{-2j}, \quad j \geq 1,\qquad \gamma = \sum_{k \geq 1} \binom{2k}{k} q_k, \qquad \eta = \sum_{k \geq 1} (2k + 1) \binom{2k}{k} q_k.
  \]
\end{lemma}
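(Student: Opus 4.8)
The relations $q_j = p_j(1-\gamma)^{-2j}$ implicitly define each $q_j$ as a formal power series in $\pp$, but notice that $\gamma$ itself depends on $\qq$, which depends on $\pp$, so these relations are genuinely implicit and precisely of the form to which the multivariate Lagrange inversion formula applies. I would set up the change of variables in the standard LIFT framework: writing $\phi_j(\qq) = (1-\gamma(\qq))^{-2j}$ so that $q_j = p_j \phi_j(\qq)$, the hypotheses of \cite[Theorem 1.2.9]{goulden-jackson04} are met since each $\phi_j$ has constant term $1$.

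\textbf{Key steps, in order.} First, recall the precise statement of the form of the multivariate Lagrange Implicit Function Theorem we need: if $q_j = p_j \phi_j(\qq)$ with $\phi_j$ having nonzero constant term, then for $F \in \QQ[[\pp]]$,
\[
  [p_\alpha] F = [q_\alpha]\left( F(\qq)\, \prod_{j\geq 1}\phi_j(\qq)^{\alpha_j} \cdot \det\left( \delta_{ij} - \frac{q_i}{\phi_j}\diff[\phi_j]{q_i} \right)_{i,j} \right),
\]
where $\alpha$ has $\alpha_j$ parts equal to $j$, and the determinant is over the (finite) index set of parts appearing in $\alpha$. Second, I would compute the product factor: since $\phi_j = (1-\gamma)^{-2j}$, we get $\prod_j \phi_j^{\alpha_j} = (1-\gamma)^{-2\sum_j j\alpha_j} = (1-\gamma)^{-2d}$, because $\sum_j j\alpha_j = d = |\alpha|$. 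Third, and this is where the real work lies, I would evaluate the Jacobian determinant. Here $\diff[\phi_j]{q_i} = 2j(1-\gamma)^{-2j-1}\diff[\gamma]{q_i} = 2j(1-\gamma)^{-2j-1}\binom{2i}{i}$, so $\frac{q_i}{\phi_j}\diff[\phi_j]{q_i} = \frac{2j\, q_i \binom{2i}{i}}{1-\gamma}$; crucially this factors as $(\text{function of }i)\times(\text{function of }j)$, i.e.\ the matrix $\left(\frac{q_i}{\phi_j}\diff[\phi_j]{q_i}\right)$ is rank one. For a rank-one perturbation $I - uv^{\mathsf{T}}$, the determinant is $1 - v^{\mathsf{T}}u$, so the Jacobian determinant equals $1 - \sum_{i} \frac{2i\, q_i\binom{2i}{i}}{1-\gamma}$, where the sum is over the parts $i$ of $\alpha$ — but since we need the formula for a fixed partition $\alpha$, I should be careful about whether the sum runs over all $i\geq 1$ or only those appearing in $\alpha$. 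Here I expect the resolution: the determinant in LIFT is over the index set $\{j : \alpha_j \neq 0\}$, yet the rank-one structure means $\det(I - uv^{\mathsf T}) = 1 - \sum_{i: \alpha_i\neq 0} u_i v_i$ only picks up those parts, and one then checks by a direct computation (or by the identity $\sum_{i\geq 1} 2i\binom{2i}{i}q_i = \eta - \gamma$ combined with the structure of the argument) that the correct global identity emerges. Fourth, I would assemble: $(1-\gamma)^{-2d}$ times the Jacobian determinant, and simplify using $\sum_{i\geq 1} 2i\binom{2i}{i} q_i = \sum_{i\geq 1}(2i+1)\binom{2i}{i}q_i - \sum_{i\geq 1}\binom{2i}{i}q_i = \eta - \gamma$, so $1 - \frac{\eta-\gamma}{1-\gamma} = \frac{1-\eta}{1-\gamma}$. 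Combining with $(1-\gamma)^{-2d}$ gives $\frac{1-\eta}{(1-\gamma)^{2d+1}}$, which is exactly the claimed kernel.

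\textbf{Main obstacle.} The genuinely delicate point is handling the Jacobian determinant correctly when the index set of the LIFT determinant is only the set of parts appearing in $\alpha$, rather than all of $\NN$. The rank-one trick gives $1 - \sum_{i : \alpha_i \neq 0}\frac{2i\binom{2i}{i}q_i}{1-\gamma}$, which is \emph{not} literally $\frac{1-\eta}{1-\gamma}$ unless all parts appear. The way to reconcile this is to observe that in the LIFT framework one may (harmlessly) extend to all variables $q_j$, $j\geq 1$, with the convention that the missing ones have exponent $\alpha_j = 0$; the determinant over the full (infinite but row-finite) matrix $I - uv^{\mathsf T}$ still makes sense as a formal power series and equals $1 - v^{\mathsf T} u = \frac{1-\eta}{1-\gamma}$, and one checks that extracting $[q_\alpha]$ is unaffected by this extension since the extra factors contribute only to monomials not of type $\alpha$ in a way that the coefficient extraction ignores — alternatively, invoke the precise row-finite form of \cite[Theorem 1.2.9]{goulden-jackson04} directly, which is stated for countably many variables. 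Beyond this bookkeeping point the computation is routine differentiation and the two defining sums for $\gamma$ and $\eta$.
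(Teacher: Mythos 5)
Your proposal is correct and follows essentially the same route as the paper's own proof: apply the multivariate Lagrange Implicit Function Theorem with $\phi_j = (1-\gamma)^{-2j}$, obtain $\phi_\alpha = (1-\gamma)^{-2d}$, and evaluate the Jacobian determinant via the rank-one identity $\det(I+M) = 1 + \trace(M)$ to get $\frac{1-\eta}{1-\gamma}$. The indexing subtlety you flag is handled in the paper exactly as you suggest, by taking the determinant over all $i,j \geq 1$ so that the trace is $\sum_{k\geq 1} \frac{2k\binom{2k}{k}q_k}{1-\gamma} = \frac{\eta-\gamma}{1-\gamma}$.
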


\begin{proof}
  Let $\phi_j = (1 - \gamma)^{-2j}$, so that $q_j = p_j \phi_j$, $ j\geq 1$. Then, from the multivariate Lagrange Implicit Function Theorem~\cite[Theorem 1.2.9]{goulden-jackson04}, we have
  \begin{align*}
    [p_\alpha] F
      &= [q_\alpha] F \, \phi_\alpha \det\left( \delta_{ij} - q_j \diff{q_j} \log \phi_i \right)_{i,j \geq 1} \\
      &= [q_\alpha] F \, \phi_\alpha \det\left( \delta_{ij} - \frac{2i q_j}{1 - \gamma} \binom{2j}{j} \right)_{i,j \geq 1},
  \end{align*}
  where $\phi_\alpha = \prod_{j \geq 1} \phi_{\alpha_j}$. We have $\phi_\alpha = (1 - \gamma)^{-2d}$, and using the fact that $\det(I + M) = 1 + \trace(M)$ for any matrix $M$ of rank zero or one, we can evaluate the determinant as
  \[
    \det\left( \delta_{ij} - q_j \diff{q_j} \log \phi_i \right)_{i,j \geq 1}
      = 1 - \sum_{k \geq 1} \frac{2k q_k}{1 - \gamma} \binom{2k}{k}
      = \frac{1 - \eta}{1 - \gamma}.
  \]
  Substituting, we obtain
  \[
    [p_\alpha] F = [q_\alpha] \frac{(1 - \eta) F}{(1 - \gamma)^{2d + 1}}. \qedhere
  \]
\end{proof}

Using Lemma~\ref{thm:LIFT}, we are now able to obtain the explicit formula given in \autoref{thm:Mongenusone} for the genus one monotone Hurwitz numbers $\mon_1(\alpha)$.

\begin{theorem}
 The genus one monotone single Hurwitz numbers $\mon_1(\alpha)$, $\alpha \vdash d$ are given by
  \begin{multline*}
    \mon_1(\alpha) = \frac{1}{24} \frac{d\,!}{\abs{\Aut \alpha}} \prod_{i=1}^{\ell(\alpha)} \binom{2\alpha_i}{\alpha_i} \\
    \times \left( (2d + 1)^{\overline{\ell(\alpha)}} - 3 (2d + 1)^{\overline{\ell(\alpha) - 1}} - \sum_{k = 2}^{\ell(\alpha)} (k - 2)! (2d + 1)^{\overline{\ell(\alpha) - k}} e_k(2\alpha + 1) \right).
  \end{multline*}
\end{theorem}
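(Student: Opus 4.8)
By the definition \eqref{eqn:genusGFmono} of $\Mon_1$, the monomials $p_\beta$ for distinct $\beta \vdash d$ are distinct, so $\mon_1(\alpha) = d\,!\,[p_\alpha]\Mon_1$ for $\alpha \vdash d$. Applying \autoref{thm:LIFT} with $F = \Mon_1$ and substituting the formula of \autoref{cor:genfcnone} reduces everything to a single coefficient extraction in $\QQ[[\qq]]$:
\[
  \mon_1(\alpha) = d\,!\,[q_\alpha]\,\Psi, \qquad
  \Psi := \frac{1-\eta}{(1-\gamma)^{2d+1}}\left( \tfrac1{24}\log\tfrac1{1-\eta} - \tfrac18\log\tfrac1{1-\gamma} \right),
\]
so the task is to extract the coefficient of $q_\alpha$ from a bivariate power series in the two linear forms $\gamma$ and $\eta$.

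The first key step is a combinatorial lemma for such extractions. Since $\gamma$ and $\eta$ are homogeneous of degree one in the $q_k$ with $[q_k]\gamma = \binom{2k}{k}$ and $[q_k]\eta = (2k+1)\binom{2k}{k}$, the multinomial theorem gives $[q_\alpha]\gamma^a\eta^b = 0$ unless $a+b = \ell(\alpha)$, and in that case, after factoring out the central binomials attached to each part,
\[
  [q_\alpha]\gamma^a\eta^b = \frac{a\,!\,b\,!}{\abs{\Aut\alpha}}\left(\prod_{j=1}^{\ell(\alpha)}\binom{2\alpha_j}{\alpha_j}\right) e_b(2\alpha+1),
\]
because the leftover sum is exactly $[s^b]\prod_j (1+(2\alpha_j+1)s) = e_b(2\alpha+1)$. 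This produces in one stroke the factor $\prod_j\binom{2\alpha_j}{\alpha_j}$, the automorphism factor $\abs{\Aut\alpha}$, and the elementary symmetric polynomials, and reduces the theorem to the identity
\[
  \sum_{a+b=\ell(\alpha)} a\,!\,b\,!\,\big([\gamma^a\eta^b]\Psi\big)\,e_b(2\alpha+1)
  = \tfrac1{24}\Big( (2d+1)^{\overline{\ell}} - 3(2d+1)^{\overline{\ell-1}} - \sum_{k=2}^{\ell}(k-2)!\,(2d+1)^{\overline{\ell-k}}e_k(2\alpha+1) \Big),
\]
where $\ell = \ell(\alpha)$.

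It remains to read off the bivariate Taylor coefficients of $\Psi$ and assemble the left-hand side. Using $(1-\gamma)^{-(2d+1)} = \sum_a \binom{2d+a}{a}\gamma^a$ (and $a\,!\binom{2d+a}{a} = (2d+1)^{\overline a}$), the fact that $[\eta^b]\big((1-\eta)\log\frac1{1-\eta}\big)$ equals $0,\,1,\,\frac{-1}{b(b-1)}$ for $b=0,\,1,\,\geq 2$, and the standard identity $[\gamma^a]\frac{\log(1-\gamma)^{-1}}{(1-\gamma)^{2d+1}} = \binom{2d+a}{a}(H_{2d+a}-H_{2d})$, one finds that the $\tfrac1{24}\log\frac1{1-\eta}$ part of $\Psi$ contributes $\tfrac1{24}(2d+1)^{\overline{\ell-1}}e_1(2\alpha+1) - \tfrac1{24}\sum_{k\geq 2}(k-2)!\,(2d+1)^{\overline{\ell-k}}e_k(2\alpha+1)$, while the $-\tfrac18\log\frac1{1-\gamma}$ part contributes only through $b=0$ and $b=1$ (since $1-\eta$ reaches only $\eta^0,\eta^1$), and its two harmonic-number terms combine to $\tfrac18(2d+1)^{\overline{\ell}}\big((H_{2d+\ell-1}-H_{2d}) - (H_{2d+\ell}-H_{2d})\big) = -\tfrac18(2d+1)^{\overline{\ell}}/(2d+\ell) = -\tfrac18(2d+1)^{\overline{\ell-1}}$. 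Finally $e_1(2\alpha+1) = 2d+\ell$ gives $(2d+1)^{\overline{\ell-1}}e_1(2\alpha+1) = (2d+1)^{\overline{\ell}}$, and adding the two contributions yields exactly the right-hand side above.

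The main obstacle is the bookkeeping in this last step: each ingredient is elementary, but one must track the ranges of $b$ carefully, observe that the harmonic-number terms coming from $\log\frac1{1-\gamma}$ are individually non-polynomial in $d$ yet cancel between $b=0$ and $b=1$, and correctly absorb the $e_1(2\alpha+1)$ contribution to recover the leading rising products together with the coefficient $-3$. The combinatorial lemma for $[q_\alpha]\gamma^a\eta^b$ is conceptually the crux but is a routine computation once set up.
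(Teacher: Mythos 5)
Your proposal is correct and follows the paper's proof in all essentials: both arguments apply \autoref{thm:LIFT} to the formula of \autoref{cor:genfcnone} and both rest on the same key extraction $[q_\alpha]\,\gamma^{\ell-k}\eta^{k} = \frac{(\ell-k)!\,k!}{\abs{\Aut\alpha}}\prod_j\binom{2\alpha_j}{\alpha_j}\,e_k(2\alpha+1)$, which the paper obtains by iterating the product rule. The only divergence is local, in the treatment of the $\log\frac{1}{1-\gamma}$ term: the paper first writes $[p_\alpha]F = \frac{1}{d}[p_\alpha]\DD F$ and uses~\eqref{eq:dieqdiep} to convert $\DD$ into $\EE$, which turns the logarithm into the rational function $\frac{1}{2d}(1-\gamma)^{-2d}$ so that no harmonic numbers ever appear, whereas you expand the logarithm directly via $[\gamma^a]\frac{\log(1-\gamma)^{-1}}{(1-\gamma)^{2d+1}}=\binom{2d+a}{a}(H_{2d+a}-H_{2d})$ and rely on the cancellation $H_{2d+\ell}-H_{2d+\ell-1}=\frac{1}{2d+\ell}$ between the $\eta^0$ and $\eta^1$ contributions; both routes are valid and yield the same $-\frac18(2d+1)^{\overline{\ell-1}}$.
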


\begin{proof} From Theorem~\ref{cor:genfcnone}, we have
\begin{equation}\label{eq:explicit_genfunc}
  \Mon_1 = \sum_{d \geq 1} \sum_{\alpha \vdash d} \mon_1(\alpha) \frac{p_\alpha}{d\,!} = \tfrac{1}{24} \log\frac{1}{1 - \eta} - \tfrac{1}{8} \log\frac{1}{1 - \gamma}.
\end{equation}
For the first term in $\Mon_1$, applying \autoref{thm:LIFT}, we obtain
\begin{align*}
  [p_\alpha] \log \frac{1}{1 - \eta} 
    &= [q_\alpha] \frac{1 - \eta}{(1 - \gamma)^{2d + 1}} \log\frac{1}{1 - \eta} \\
    &= [q_\alpha] \left( \sum_{j \geq 0} (2d + 1)^{\overline{j}} \frac{\gamma^j}{j!} \right) \left( \eta - \sum_{k \geq 2} (k - 2)! \frac{\eta^k}{k!} \right) \\
    &= [q_\alpha] \left( (2d + 1)^{\overline{\ell - 1}} \frac{\gamma^{\ell - 1} \eta}{(\ell - 1)!} - \sum_{k = 2}^{\ell} (k - 2)! (2d + 1)^{\overline{\ell - k}} \frac{\gamma^{\ell - k}}{(\ell - k)!} \frac{\eta^k}{k!} \right).
\end{align*}
For the remaining term in $\Mon_1$, we apply \autoref{thm:LIFT} again, together with equation~\eqref{eq:dieqdiep}, to obtain
\begin{align*}
  [p_\alpha] \log\frac{1}{1 - \gamma}
    &= \tfrac{1}{d} [p_\alpha] \DD \log\frac{1}{1 - \gamma} 
    = \tfrac{1}{d} [q_\alpha] \EE \left( \frac{1}{2d (1 - \gamma)^{2d}} \right) \\
    &= [q_\alpha] \left( \frac{1}{2d (1 - \gamma)^{2d}} \right) 
    = [q_\alpha] (2d + 1)^{\overline{\ell(\alpha) - 1}} \frac{\gamma^{\ell(\alpha)}}{\ell(\alpha)!}.
\end{align*}
But iterating the product rule gives
\begin{equation*}
\begin{aligned}
 & \abs{\Aut\alpha} [q_\alpha] \frac{\gamma^{\ell(\alpha) - k}}{(\ell(\alpha) - k)!} \frac{\eta^k}{k!} = \frac{\partial^{\ell(\alpha)}}{\partial q_\alpha} \left( \frac{\eta^k}{k!} \frac{\gamma^{\ell(\alpha) - k}}{(\ell(\alpha) - k)!} \right)\\
    &\;\; = \prod_{i = 1}^{\ell(\alpha)} \binom{2\alpha_i}{\alpha_i} \sum_{1 \leq i_1 < \cdots < i_k \leq \ell(\alpha)} (2\alpha_{i_1} + 1) (2\alpha_{i_2} + 1) \cdots (2\alpha_{i_k} + 1)  \\
    &\;\; = \prod_{i = 1}^{\ell(\alpha)} \binom{2\alpha_i}{\alpha_i} e_k(2\alpha + 1).
  \end{aligned}
\end{equation*}
The explicit expression for $\mon_1(\alpha)$ follows by combining the above results, and using the facts that $e_0(\alpha) = 1$ and $e_1(\alpha)=2d+\ell(\alpha)$.
\end{proof}

Finally, we prove the polynomiality result for monotone Hurwitz numbers stated in \autoref{thm:polynomiality}.

\begin{theorem}\label{thm:polynomiality1}
  For each pair $(g,\ell)$ with $(g,\ell) \notin \{(0,1), (0,2)\}$, there is a polynomial $\vec{P}_{g,\ell}$ in $\ell$ variables such that, for all partitions $\alpha \vdash d$ with $\ell$ parts,
  \[
    \mon_g(\alpha) = \frac{d\,!}{\abs{\Aut \alpha}} \vec{P}_{g,\ell}(\alpha_1, \dots, \alpha_\ell) \; \prod_{j=1}^\ell \binom{2\alpha_j}{\alpha_j}.
  \]
\end{theorem}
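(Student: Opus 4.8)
The plan is to treat $g=0$, $g=1$, and $g\geq 2$ separately, in each case starting from an explicit description of $\Mon_g$ (or of $\mon_g(\alpha)$ itself) and reading off the coefficient of $p_\alpha$, with \autoref{thm:LIFT} providing the translation between $[p_\alpha]$ and $[q_\alpha]$.

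For $g=0$ with $\ell=\ell(\alpha)\geq 3$, the formula of \autoref{thm:gzformula} is already in the required shape: the rising product $(2d+1)^{\overline{\ell-3}}=(2d+1)(2d+2)\cdots(2d+\ell-3)$ is an honest polynomial in $d=\alpha_1+\cdots+\alpha_\ell$, hence a polynomial in the parts, so one takes $\vec{P}_{0,\ell}(\alpha_1,\dots,\alpha_\ell)=(2(\alpha_1+\cdots+\alpha_\ell)+1)^{\overline{\ell-3}}$. For $\ell\in\{1,2\}$ the same rising product is interpreted as a reciprocal and is not a polynomial, which is exactly why those pairs are excluded. For $g=1$, the explicit formula of \autoref{thm:Mongenusone} likewise already displays $\mon_1(\alpha)$ as $\tfrac{d\,!}{\abs{\Aut\alpha}}\prod_j\binom{2\alpha_j}{\alpha_j}$ times a polynomial: every rising product appearing there has a nonnegative number of factors (namely $\ell$, $\ell-1$, or $\ell-k\geq 0$), hence is polynomial in $d$, and each $e_k(2\alpha+1)$ is a symmetric polynomial in the parts, so their combination is the desired $\vec{P}_{1,\ell}$.

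The substance of the argument is the case $g\geq 2$. Here I would start from the rational form of \autoref{thm:Mongenfcntwo} and apply \autoref{thm:LIFT}: for a partition $\beta$ with $n=\abs{\beta}\geq 1$ and $\ell$ parts,
\[
\mon_g(\beta)=n\,!\,[p_\beta]\Mon_g=n\,!\,[q_\beta]\,\frac{(1-\eta)\Mon_g}{(1-\gamma)^{2n+1}}.
\]
Since $[p_\beta]$ annihilates constants, the term $-c_{g,(0)}$ may be dropped, and it remains to extract $[q_\beta]$ from $\sum_{\alpha}c_{g,\alpha}\,\eta_\alpha\,(1-\eta)^{-(\ell(\alpha)+2g-3)}(1-\gamma)^{-(2n+1)}$. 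I would expand $(1-\eta)^{-m}$ and $(1-\gamma)^{-m'}$ as binomial series in $\eta=\sum_k(2k+1)\binom{2k}{k}q_k$ and $\gamma=\sum_k\binom{2k}{k}q_k$, and recall that $\eta_\alpha=\prod_i\eta_{\alpha_i}$ with $\eta_{\alpha_i}=\sum_k(2k+1)k^{\alpha_i}\binom{2k}{k}q_k$. Each factor in the resulting product is linear in the $q_k$, so $[q_\beta]$ of the product is a finite sum, over the ways of distributing the $\ell$ parts of $\beta$ one per factor (which forces the total number of factors to be $\ell$, in particular $\ell(\alpha)\leq\ell$), of a product of weights: each part $\beta_j$ lands in exactly one factor and contributes exactly one $\binom{2\beta_j}{\beta_j}$, together with one of $(2\beta_j+1)\beta_j^{\alpha_i}$, $(2\beta_j+1)$, or $1$; and the binomial coefficients coming from the geometric expansions are polynomials in $n=\beta_1+\cdots+\beta_\ell$ of bounded degree. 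Because $d\leq 3g-3$, there are only finitely many terms overall, depending only on $g$ and $\ell$. Exactly as in the genus-one computation, factoring out $\abs{\Aut\beta}^{-1}\prod_j\binom{2\beta_j}{\beta_j}$ from $[q_\beta]$ leaves a polynomial in $\beta_1,\dots,\beta_\ell$ that is independent of $\beta$ itself, and this is the sought-after $\vec{P}_{g,\ell}$.

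The step I expect to demand the most care is this last coefficient extraction: checking that, uniformly over all ways of assigning the parts of $\beta$ to the various linear factors — the $\eta_{\alpha_i}$ together with the $\eta$- and $\gamma$-factors produced by expanding $(1-\eta)^{-(\ell(\alpha)+2g-3)}$ and the companion $(1-\gamma)^{-(2n+1)}$ from \autoref{thm:LIFT} — the factor $\abs{\Aut\beta}^{-1}\prod_j\binom{2\beta_j}{\beta_j}$ pulls out cleanly, and that the residual weights (notably the binomial coefficients $\binom{2n+j}{j}$, in which $n$ is itself the linear form $\beta_1+\cdots+\beta_\ell$) are genuinely polynomial in the individual parts. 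Once this bookkeeping is done, the three cases assemble into the theorem.
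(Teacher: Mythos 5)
Your proposal is correct and follows essentially the same route as the paper: the genus-zero case is read off from the explicit formula, and for higher genus one applies \autoref{thm:LIFT} to the rational form of $\Mon_g$, uses the linearity of $\gamma,\eta,\eta_1,\eta_2,\dots$ in $\qq$ to reduce $[q_\beta]$ to a finite sum over assignments of parts to linear factors, and observes that each part contributes one $\binom{2\beta_j}{\beta_j}$ times a polynomial weight while the binomial coefficients from expanding $(1-\gamma)^{-(2n+1)}$ are polynomials in $n=\abs{\beta}$. The only cosmetic difference is that you dispatch $g=1$ via the already-derived explicit formula of \autoref{thm:Mongenusone}, whereas the paper folds $g=1$ into the same LIFT-based expansion as $g\geq 2$; both are valid.
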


\begin{proof}
  For $g = 0$, this follows from the explicit formula for genus zero monotone Hurwitz numbers given in~\cite{goulden-guay-paquet-novak12a}, which has this form for $\ell \geq 3$. For $g \geq 1$, by applying \autoref{thm:LIFT}, we obtain
  \[
    \mon_g(\alpha) = d\,! [p_\alpha] \Mon_g = d\,! [q_\alpha] \frac{(1 - \eta) \Mon_g}{(1 - \gamma)^{2d+1}}.
  \]
  Given the general form from \autoref{thm:Mongenfcntwo}, the power series on the right-hand side can be expanded as an infinite sum of (rational multiples of) terms of the form
  \[
    \binom{-2d-1}{m} \gamma^m \eta^{n_0} \eta_1^{n_1} \eta_2^{n_2} \cdots \eta_k^{n_k},
  \]
  where $m, n_0, n_1, \ldots, n_k \geq 0$ are integers. However, since the series $\gamma, \eta, \eta_1, \eta_2, \ldots$ are all linear in the indeterminates $\qq$, only the finitely many terms with $m + n_0 + n_1 + \cdots + n_k = \ell$ contribute to the coefficient of $q_\alpha$. For $m$ fixed, the binomial coefficient $\binom{-2d-1}{m}$ is a polynomial in the parts of $\alpha$, and given the definition of the series $\gamma, \eta, \eta_1, \eta_2, \ldots$, the contribution to the coefficient of $q_\alpha$ is a polynomial in the parts of $\alpha$ multiplied by the factor
  \[
    \frac{1}{\abs{\Aut \alpha}} \prod_{j=1}^\ell \binom{2\alpha_j}{\alpha_j}.
  \]
  It follows that $\mon_g(\alpha)$ has the stated form.
\end{proof}

\appendix
\section{Rational forms for genus three}\label{sec:forms}

The following equation gives the rational form for the genus three generating series for the monotone single Hurwitz numbers, as described in \autoref{thm:Mongenfcntwo}:

\vspace{.05in}

\begin{align*}
2^{-2}\cdot 9!\, \vec{\mathbf{H}}_3 = \Big( 90 &+ \frac{-90}{(1 - \eta)^4}\Big)
+ \frac{70 \eta_6 + 63 \eta_5 - 377 \eta_4 - 189 \eta_3 + 667 \eta_2 + 126 \eta_1}{(1 - \eta)^5} \\
&+ \frac{1078 \eta_1 \eta_5 + 2012 \eta_2 \eta_4 + 1214 \eta_3^2 + 1209 \eta_1 \eta_4 }{(1 - \eta)^6} \\
&+ \frac{1998 \eta_2 \eta_3 - 3914 \eta_1 \eta_3 - 2627 \eta_2^2 - 2577 \eta_1 \eta_2 + 1967 \eta_1^2}{(1 - \eta)^6} \\
&+ \frac{8568 \eta_1^2 \eta_4 + 26904 \eta_1 \eta_2 \eta_3  + 5830 \eta_2^3 + 10092 \eta_1^2 \eta_3}{(1 - \eta)^7} \\
&+ \frac{13440 \eta_1 \eta_2^2 - 20322 \eta_1^2 \eta_2 - 4352 \eta_1^3}{(1 - \eta)^7} \\
&+ \frac{ 44520 \eta_1^3 \eta_3 + 86100 \eta_1^2 \eta_2^2 +  49980 \eta_1^3 \eta_2 - 15750 \eta_1^4}{(1 - \eta)^8} \\
&+ \frac{162120 \eta_1^4 \eta_2 + 31080 \eta_1^5}{(1 - \eta)^9}
+ \frac{68600 \eta_1^6}{(1 - \eta)^{10}}
\end{align*}

\vspace{.05in}

This should be compared with the genus three generating series for the single Hurwitz numbers that appeared in~\cite{goulden-jackson-vakil01}:

\vspace{.05in}

\begin{align*}
2^4\cdot 9!\, \mathbf{H}_3 = 
&{}\quad \frac{70 \phi_6 - 294 \phi_5 + 410 \phi_4 - 186 \phi_3 }{(1 - \phi)^5} \\
&+ \frac{1078 \phi_1 \phi_5 + 2012 \phi_2 \phi_4 + 1214 \phi_3^2 + 2418 \phi_1 \phi_4 }{(1 - \phi)^6} \\
&+ \frac{- 6156 \phi_2 \phi_3 + 4658 \phi_1 \phi_3 + 3002 \phi_2^2 - 1860 \phi_1 \phi_2}{(1 - \eta)^6} \\
&+ \frac{8568 \phi_1^2 \phi_4 + 26904 \phi_1 \phi_2 \phi_3 + 5830 \phi_2^3 - 25968 \phi_1^2 \phi_3 }{(1 - \phi)^7} \\
&+ \frac{ - 33642 \phi_1 \phi_2^2 + 25770 \phi_1^2 \phi_2 - 2790\phi_1^3}{(1 - \phi)^7} \\
&+ \frac{44520 \phi_1^3 \phi_3 + 86100 \phi_1^2 \phi_2^2 - 110600 \phi_1^3 \phi_2 + 21420 \phi_1^4}{(1 - \phi)^8} \\
&+ \frac{162120 \phi_1^4 \phi_2 - 62440 \phi_1^5}{(1 - \phi)^9}
+ \frac{68600 \phi_1^6}{(1 - \phi)^{10}}
\end{align*}

\vspace{.05in}

\raggedbottom
\pagebreak

\bibliographystyle{myplain}
\bibliography{monotone}
\end{document}